\newtcolorbox{mybox}[1]{%
    tikznode boxed title,
    hbox,
    enhanced,
    arc=0mm,
    interior style={white},
    attach boxed title to top center= {yshift=-\tcboxedtitleheight/2},
    %fonttitle=\bfseries,
    %minimal,tight,fbox,small,title,normal
    colbacktitle=white,coltitle=black,
    size = small,
    boxed title style={size=fbox,colframe=white,boxrule=0pt},
    title={#1}}
\declaretheorem[numberwithin=section, name=Lemma]{lemma}
\declaretheorem[name=Theorem,sibling=lemma]{theorem}
\declaretheorem[name=Corollary,sibling=lemma]{corollary}
\declaretheorem[name=Conjecture,sibling=lemma]{conjecture}
\declaretheorem[style=definition, name=Example,sibling=lemma]{example}
\declaretheorem[style=definition,name=Definition,sibling=lemma]{definition}
\declaretheorem[style=definition, name=Remark,sibling=lemma]{remark}
\declaretheorem[style=definition, name=Question,sibling=lemma]{question}
\newcommand\xqed[1]{%
  \leavevmode\unskip\penalty9999 \hbox{}\nobreak\hfill
  \quad\hbox{#1}}
\newcommand\eoe{\xqed{$\triangle$}}
\newcommand{\N}{\mathbb{N}}
\newcommand{\lcm}{\operatorname{lcm}}
\newcommand{\A}{{\mathbb A}}
\newcommand{\B}{{\mathbb B}}
\newcommand{\C}{{\mathbb C}}
\newcommand{\D}{{\mathbb D}}
\newcommand{\F}{{\mathbb F}}
\newcommand{\Downset}{{\Gamma}}
\newcommand{\Cyc}[1]{{\mathbb C}_{#1}}
\newcommand{\KThree}{{\mathbb K}_3}
\newcommand{\shiftTuple}[2]{{#1}_{#2}}
\newcommand{\Pol}{\ensuremath{\operatorname{Pol}}}
\newcommand{\Hom}{\ensuremath{\operatorname{Hom}}}
\newcommand{\PL}{\ensuremath{\operatorname{PCL}}}
\newcommand{\FD}{\ensuremath{\mathcal F_D(\omega)}}
\newcommand{\PPPoset}{{\mathfrak P}_{\operatorname{fin}}}
\newcommand{\SDPoset}{{\mathfrak P}_{\operatorname{UC}}}
\newcommand{\PCPoset}{{\mathfrak P}_{\operatorname{UPC}}}
\newcommand{\Flat}{\operatorname{rad}}
\newcommand{\MPCL}{\mathfrak M_{\PL}}
\newcommand{\MCL}{\mathfrak M_{\operatorname{CL}}}
\newcommand{\mCL}{M_{\operatorname{CL}}}
\newcommand{\mPCL}{M_{\PL}}
\newcommand{\cycle}[2]{
\def \n {#1}
\def \radius {{#1*0.035+0.15}}
\def \margin {3.8/(#1*0.035+0.15)} % margin in angles, depends on the radius
\node[scale={0.8+#1/10}] at #2 {#1};

\foreach \s in {1,...,\n}
{
%  \node[draw, circle] at ({360/\n * (\s - 1)}:\radius) {};
  \draw #2+({360/\n * (\s - 1)}:\radius) circle (1pt);
  \draw[>=stealth',arrows=-{>[bend]}] #2+({360/\n * (\s - 1)+\margin}:\radius) 
    arc ({360/\n * (\s - 1)+\margin}:{360/\n * (\s)-\margin}:\radius);
}}
\newcommand{\cycleEmpty}[3]{
\def \n {#1}
\def \radius {{#1*0.035+0.15}}
\def \margin {3.8/(#1*0.035+0.15)} % margin in angles, depends on the radius

\foreach \s in {1,...,\n}
{
  %\node at ($#2+({360/\n * (\s - 1)}:\radius)$) (#1\s) { };
  \draw #2+({90+360/\n * (\s - 1)}:\radius) circle (1pt);
}
\foreach \s in #3
{
    \draw[>=stealth',arrows=-{>[bend]}] #2+({90+360/\n * (\s - 1)+\margin}:\radius) 
    arc ({90+360/\n * (\s - 1)+\margin}:{90+360/\n * (\s)-\margin}:\radius);
}
}
\title{Smooth digraphs modulo primitive positive constructability and cyclic loop conditions}
\author{M.~Bodirsky, F.~Starke\thanks{The second author is supported by DFG Graduiertenkolleg 1763 (QuantLA).}, A.~Vucaj\thanks{The first and the third author have received funding from the European Research Council (ERC Grant
Agreement no. 681988, CSP-Infinity).}\\\small{Institut für Algebra, TU Dresden, 01062 Dresden, Germany}}
\begin{document}

\usetikzlibrary{arrows,calc}

\maketitle

\begin{abstract}
Finite smooth digraphs, that is, finite directed graphs without sources and sinks, can be partially ordered via pp-constructability. 
We give a complete description of this poset and, in particular, we prove that it is a distributive lattice. Moreover, we show that in order to separate two smooth digraphs in our poset
it suffices to show that the polymorphism clone of one of the digraphs satisfies a prime cyclic loop condition that is not satisfied by the polymorphism clone of the other. 
Furthermore, we prove that the poset of cyclic loop conditions ordered by their strength for clones is a distributive lattice, too.
\end{abstract}

%\keywords{height 1 identities; smooth digraphs; minor-preserving maps; primitive positive constructions; loop conditions} 

%\ccode{Mathematics Subject Classification 2010: 06E30, 08A70, 08B05 }

\tableofcontents

\section{Introduction}

We consider a poset which is closely related to \emph{Constraint Satisfaction Problems} (CSPs). 
In 1999 Feder and Vardi conjectured that the class of CSPs over finite structures admits a dichotomy, i.e., every such problem is either polynomial-time tractable or NP-complete. Recently, Zhuk and Bulatov independently proved this conjecture~\cite{ZhukFVConjecture,BulatovFVConjecture}.
This dichotomy has an algebraic counterpart: the structures with NP-complete CSP are precisely those that \emph{pp-construct} $\tikz{
\node at (-30:0.2) [circle, fill, scale=0.3] (0) {};
\node at (90:0.2) [circle, fill, scale=0.3] (1) {};
\node at (210:0.2) [circle, fill, scale=0.3] (2) {};
\path 
    (0) edge (1)
    (1) edge (2)
    (2) edge (0);
}$, the complete graph on three vertices $\KThree$ (unless $\text{P}= \text{NP}$). It turns out that pp-constructability is a quasi-order on the class of all finite structures. There are log-space reductions between the CSPs of comparable structures~\cite{wonderland}.
Hence, understanding the arising poset (called \emph{pp-constructability poset}) can lead to a better understanding of the precise computational complexity of CSPs within P.

The same poset arises in universal algebra in two other ways. A finite structure $\A$ pp-constructs a finite structure $\B$ if and only if there is a
\emph{minor-preserving map} from the polymorphism clone of $\A$ to the polymorphism clone of $\B$~\cite{wonderland}; for definitions, see
Section~\ref{Sec:Preliminaries}.
A third way to describe this poset is a Birkhoff-like approach, extending the concept of a variety in universal algebra by a variety that is not only closed under homomorphic images, subalgebras, and products, but also closed under taking so-called \emph{reflections} of algebras.
We do not need this perspective in the present article and refer the reader to~\cite{wonderland}.

The present article is a step at the beginning of the journey to understand the pp-construct\-ability
poset on all finite structures. This may be a ground for a finer classification of finite domain CSPs than the P/NP-complete dichotomy of Bulatov and Zhuk. A complete description of the subposet arising from two-element structures is given in~\cite{albert}.

The subposet arising from undirected graphs is just a three-element chain~\cite{BulatovHColoring}: 
\[[\tikz{
\node at (-30:0.2) [circle, fill, scale=0.3] (0) {};
\node at (90:0.2) [circle, fill, scale=0.3] (1) {};
\node at (210:0.2) [circle, fill, scale=0.3] (2) {};
\path 
    (0) edge (1)
    (1) edge (2)
    (2) edge (0);
}] < [\tikz{
\node at (0.2,0) [circle, fill, scale=0.3] (1) {};
\node at (-0.1,0) [circle, fill, scale=0.3] (2) {};
\path 
    (1) edge (2);
}] < [\tikz{
\node at (-30:0.2) [circle, fill, scale=0.3] (0) {};
}].\]

However, studying the whole poset appears to be very difficult, since even seemingly simple cases, like for example directed graphs, are not well understood. In fact, in an early attempt to close in on the Feder-Vardi conjecture, researchers tried (hard) to classify which oriented trees have an NP-complete CSP, only managing to get results for particular classes of oriented trees \cite{BartoBulin, BartoKozikMarotiNiven, BulinColoring, HellTreeHomomorphisms, HellDualityTree}.
However, already before the result of Bulatov and Zhuk it has been proved that the P/NP-complete dichotomy holds for a particular family of directed graphs, i.e., for every finite directed graph such that every vertex has an incoming and an outgoing edge~\cite{BartoKozikNiven}.
Such digraphs are known in the literature as \emph{finite smooth digraphs}. From a result of Barto, Kozik, and Niven~\cite{BartoKozikNiven} it follows that every finite smooth digraph whose CSP is not NP-complete can be represented in the poset by a finite disjoint union of directed cycles. 

In this article, we restrict our attention to finite smooth digraphs ordered by pp-constructability
and give a complete description of the corresponding poset.
In particular, it turns out that this poset is even a distributive lattice and that it suffices to consider finite disjoint unions of directed cycles of square-free lengths.
It is known that for any two structures that do not pp-construct each other there is a \emph{height 1 (strong Mal'cev) condition} that is satisfied by polymorphisms of one of the structures but not by polymorphisms of the other~\cite{wonderland}. The present article shows that for finite smooth digraphs this height 1 condition can be chosen to be a so-called \emph{cyclic loop condition}. An example for such a condition is 
\[f(x_0,x_1,y_0,y_1,y_2)\approx f(x_1,x_0,y_1,y_2,y_0).\]
Cyclic loop conditions are also related to finite disjoint unions of cycles in a more straightforward way. For example the above condition can be linked to the disjoint union of a cycle of length two and a cycle
of length three.

\subsection*{Outline}
In Sections~\ref{Sec:Preliminaries} and~\ref{sec:smoothDigraphs} we introduce basic notation, in particular pp-constructions, cyclic loop conditions, and disjoint unions of cycles.
To warm up, in Section \ref{sec:primefun}, we prove some of the results of this paper for the special case of disjoint unions of prime cycles and prime cyclic loop conditions.  
In Section~\ref{sec:conditions} we focus only on cyclic loop conditions. We give a description of the implication order on cyclic loop conditions (Theorem~\ref{thm:implicationSingleClc}) and on sets of cyclic loop conditions (Corollary~\ref{thm:implicationClc}). Part of this description is proved later as it depends on the interplay of cyclic loop conditions and finite disjoint unions of directed cycles. 
We show that every cyclic loop condition is equivalent to a set of prime cyclic loop conditions (Theorem~\ref{thm:clcAreSetsOfPclc}). Finally, we show that the poset of sets of prime cyclic loop conditions ordered by their strength for clones, and hence the poset of cyclic loop conditions ordered by their strength for clones, has a comprehensible description (Corollary~\ref{cor:characterizationMCLAndmCL}). 
In Section~\ref{sec:structures} we  first characterize the relation $\models$ on finite disjoint unions of directed cycles and cyclic loop conditions (Lemma~\ref{lem:duofcSatisfyClc}). We use this characterization to show the missing
part of the proof in Section~\ref{sec:conditions} mentioned above. Then we prove, using results from Section~\ref{sec:conditions}, that the pp-constructability type of a finite disjoint union of directed cycles is determined by the finite set of prime cyclic loop conditions that it does not satisfy (Lemma~\ref{thm:PPvsLoop}). Furthermore, we show that every reverse-implication closed finite set of prime cyclic loop conditions is realised by some  finite disjoint union of directed cycles (Lemma~\ref{lem:surjectivety}). These two statements give a complete description of the poset of  finite disjoint union of directed cycles ordered by pp-constructability. Together with the result from Barto, Kozik, and Niven we obtain a description of the poset of finite smooth digraphs ordered by pp-constructability  (Corollary~\ref{thm:classificationPoset}).
In Section~\ref{sec:lattice} we show that this poset is even a distributive lattice.

\section{Preliminaries}\label{Sec:Preliminaries}
In this section we present formal definitions of notions mentioned in the introduction.  

\subsection*{Notation}
\begin{itemize}
    \item For $n\in\N^+\!$, we define $[n]\coloneqq\{1,\dots,n\}$.
    \item By $\operatorname{Im}(f)$ we denote the image of $f$.
    \item By $\lcm$ and $\gcd$ we denote the least common multiple and the greatest common divisor, respectively.
    \item For a tuple $\boldsymbol{a} = (a_1,\dots,a_n)$ and a map $\sigma \colon [m]\to [n]$, we
denote the tuple $(a_{\sigma(1)},\dots,a_{\sigma(m)})$ by $\shiftTuple{\boldsymbol{a}}{\sigma}$.
    \item By $k\equiv_a \ell$ we denote $k=\ell\pmod a$
    \item By $\A\hookrightarrow \B$ we denote that $\A$ embeds into $\B$.
\end{itemize}

\subsection{The pp-constructability poset}

Let $\A=(A,(R^\A)_{R\in\tau})$ and $\B=(B,(R^\B)_{R\in\tau})$ be structures with the same relational signature $\tau$. A map
$h\colon A \to B$ is a \emph{homomorphism} from $\A$ to $\B$ if it preserves all relations, i.e., for all $R\in \tau$:
\[\text{if } (a_1,\dots,a_n)\in R^\A, \text{ then } (h(a_1),\dots,h(a_n))\in R^\B.\]
We write $\A\to\B$ if there exists a homomorphism from $\A$ to $\B$. 
If $\A\to\B$ and $\B\to\A$, then we say that $\A$ and $\B$ are \emph{homomorphically equivalent}.

Let $\mathbb A$ be a relational structure and $\phi(x_1,\dots,x_n)$ be a \emph{primitive positive} (pp-) formula, i.e., a first order formula using only existential quantification and conjunctions of atomic formulas. 
Then the relation 
\begin{equation*}
\{(a_1,\dots,a_n)\mid \mathbb A \vDash \phi(a_1,\dots,a_n)\}.
\end{equation*}
is said to be \emph{pp-definable} in $\mathbb A$.
We say that $\mathbb B$ is a \emph{pp-power} of $\A$ if it is isomorphic to a structure with domain $A^n$, for some $n\in \N^+$\!, whose relations are pp-definable in $\A$ (a $k$-ary relation on $A^n$ is regarded as a $kn$-ary relation on $A$).

Combining the notions of homomorphic equivalence and pp-power we obtain the following definition from~\cite{wonderland}.
\begin{definition}
We say that $\A$ \emph{pp-constructs} $\B$, denoted by $\A\leq\B$, if $\B$ is homomorphically equivalent to a pp-power of $\A$.
\end{definition}
Since pp-constructability is a reflexive and transitive relation on the class of relational structures~\cite{wonderland}, the use of the symbol $\leq$ is justified. 
Note that the quasi-order $\leq$ naturally induces the equivalence relation 
\[\A \equiv \B \text{ if and only if }\B \leq \A \leq \B.\]
The equivalence classes of $\equiv$ are called \emph{pp-constructability types}. We denote by $[\A]$ the pp-constructability type of a structure $\A$.
\begin{definition}
We name the poset
\[\PPPoset\coloneqq(\{[\A]\mid \A\ \text{a finite relational structure}\},\leq)\] the \emph{pp-constructability poset}.
\end{definition} 
Observe that $[\tikz{
\clip(-0.15,-0.053) rectangle (0.15,0.35);
\node[circle, fill, scale=0.3] at (0,0) (0) {};
\path (0) edge[out=120, in=60, looseness=20,->,>=stealth'] (0);
}]$, the pp-constructability type  of the loop graph, is the top element of $\PPPoset$. Later we will see that there is also a bottom element and that it is  $[\tikz{
\node at (-30:0.2) [circle, fill, scale=0.3] (0) {};
\node at (90:0.2) [circle, fill, scale=0.3] (1) {};
\node at (210:0.2) [circle, fill, scale=0.3] (2) {};
\path 
    (0) edge (1)
    (1) edge (2)
    (2) edge (0);
}]$.

\subsection{Height 1 identities}\label{h1}
As already mentioned, pp-constructability can be characterized algebraically; this characterization will provide the main tool to prove that a structure cannot pp-construct another structure. First, we define the basic notion of \emph{polymorphism}.
Let $\Hom(\A,\B)$ denote the following set:
\[\Hom(\A,\B)\coloneqq\{f\mid f \text{ a homomorphism from } \A\text{ to } \B\}.\]
For $n\geq 1$, we denote by $\A^n$ the structure
with same signature $\tau$ as $\A$ whose domain is $A^n$ such that for any $k$-ary $R\in \tau$, a tuple $(\boldsymbol{a}_1,\dots,\boldsymbol{a}_k)$ of $n$-tuples is contained in $R^{\A^n}$ if and only if it is contained in $R^\A$
componentwise, i.e., $(a_{1j},\dots,a_{kj})\in R^\A$ for all $1\leq j\leq n$.

\begin{definition}
For a relational structure $\A$, a \emph{polymorphism of $\A$} is a function that is an element of $\Hom(\A^n,\A)$, for some $n\in \N^+$\!.
Moreover, we denote by $\Pol(\A)$ the \emph{polymorphism clone} of $\A$, i.e., the set of all polymorphisms of $\A$.
\end{definition}

\begin{definition}
Let $\sigma \colon [m]\to [n]$ and $f\colon A^m \to A$ be functions. We define the function $f_\sigma\colon A^n\to A$ by the rule
\[f_\sigma(\boldsymbol{a}) \coloneqq f(\shiftTuple{\boldsymbol{a}} {\sigma}).\] 
Any function of the form $f_\sigma$, for some map $\sigma \colon [m]\to [n]$, is called a \emph{minor} of $f$.
\end{definition}

We extend the definitions of $\boldsymbol{a}_\sigma$ and $f_\sigma$ to $\sigma\colon I\to J$, $f\colon A^I\to A$, and $\boldsymbol{a}\in A^J$, where $I$ and $J$ are arbitrary sets, in the obvious way.
Let  $I$ be a finite set and $f\colon \A^I\to\A$ a homomorphism. Then for every bijection $\sigma\colon [|I|] \to I$ the function $f_\sigma$ is a polymorphism of $\A$.
Throughout this article we treat all homomorphisms $f\colon \A^I\to\A$, where $I$ is a finite set, as elements of $\Pol(\A)$ by identifying $f$ with $f_\sigma$ for a bijection $\sigma$ (in all such situations the particular choice of $\sigma$ does not matter). 

\begin{definition}
Let $\A$ and $\B$ be structures. An arity-preserving map $\lambda\colon \Pol(\B)\to\Pol(\A)$ is \emph{minor-preserving} if for all $f\colon B^m\to B$ in $\Pol(\B)$ and $\sigma\colon [m]\to [n]$ 
we have
\begin{equation*}
    \lambda(f_\sigma)=(\lambda f)_\sigma.
\end{equation*}
\end{definition}
 We write $\Pol(\B)\stackrel{\text{minor}}{\to}\Pol(\A)$ to denote that there is a minor-preserving map from $\Pol(\B)$ to $\Pol(\A)$.
The next theorem, restated from~\cite{wonderland}, shows that the concepts presented so far, pp-constructions and minor-preserving maps, give rise to the same poset.
\begin{theorem}[Theorem 1.3 in~\cite{wonderland}]\label{thm:ppvsminor}
Let $\A$ and $\B$ be finite structures. Then
\begin{align*}
    \B\leq\A&&\text{if and only if}&&\Pol(\B)\stackrel{\mathrm{minor}}{\to}\Pol(\A).
\end{align*}
\end{theorem}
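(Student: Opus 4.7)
The plan is to handle the two implications separately. The direction $\B\le\A\Rightarrow\Pol(\B)\stackrel{\mathrm{minor}}{\to}\Pol(\A)$ is a direct construction, whereas the converse is considerably deeper and constitutes the main obstacle.

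For the forward direction, assume $\A$ is homomorphically equivalent to a pp-power $\B'$ of $\B$ with domain $B^k$, via homomorphisms $h\colon\A\to\B'$ and $g\colon\B'\to\A$. I would first build a minor-preserving map $\Pol(\B)\to\Pol(\B')$ by sending each $f\in\Pol(\B)$ to its coordinatewise extension $f^{(k)}$: this lies in $\Pol(\B')$ because polymorphisms of $\B$ preserve every relation pp-definable in $\B$, and minor-preservation is immediate from coordinatewise evaluation commuting with permuting and identifying arguments. Then I would define a minor-preserving map $\Pol(\B')\to\Pol(\A)$ by $f\mapsto g\circ f\circ\langle h,\dots,h\rangle$; a routine check using that $g$ and $h$ are homomorphisms shows the image lies in $\Pol(\A)$, and respecting minors follows because $\sigma$ acts only on the tuple of arguments. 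Composing the two yields $\Pol(\B)\stackrel{\mathrm{minor}}{\to}\Pol(\A)$.

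For the backward direction, given a minor-preserving $\lambda\colon\Pol(\B)\to\Pol(\A)$, I would exhibit an explicit pp-power $\B^*$ of $\B$ that is homomorphically equivalent to $\A$. The canonical candidate is the \emph{free structure} on $A$ generators, realised concretely as a pp-power of $\B$ whose domain is $B^A$ (all maps $A\to B$) and whose relations, one for each relation of $\A$, are defined by pp-formulas in $\B$ encoding pointwise compatibility of a tuple of maps with the corresponding relation of $\A$. The two required homomorphisms are then produced as follows: $\A\to\B^*$ by sending each $a\in A$ to an appropriate generator in $B^A$, and $\B^*\to\A$ by applying $\lambda$ to the polymorphisms of $\B$ that witness membership in the pp-definable relations of $\B^*$.

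The main obstacle is the backward direction, and specifically verifying (i) that the relations of $\B^*$ are genuinely pp-definable in $\B$, so that $\B^*$ is indeed a pp-power of $\B$, and (ii) that the minor-preservation of $\lambda$ is strong enough to ensure that the induced map $\B^*\to\A$ preserves all relations. Both points rest on the Galois-type correspondence between polymorphism clones and their invariant relations at height 1, and since this theorem is quoted verbatim from~\cite{wonderland}, in a paper of the present form I would ultimately defer to that source for the detailed verification.
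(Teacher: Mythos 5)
The paper does not prove this statement: it is imported verbatim as Theorem~1.3 of~\cite{wonderland}, so there is no in-paper argument to compare yours against. Your sketch is the standard proof from that source, and it is consistent with the machinery the paper does present -- the free-structure formulation quoted as Theorem~\ref{thm:freestructure} and the concrete instance of the backward direction carried out in the proof of Lemma~\ref{thm:PPvsLoop}, where the free structure is realised exactly as you describe, as a pp-power on all maps with the non-polymorphisms left as isolated points. The only imprecision is in how you phrase the homomorphism $\B^*\to\A$: one applies $\lambda$ to the elements of the free structure themselves (which are $|A|$-ary polymorphisms of $\B$) and evaluates the result at the canonical tuple enumerating $A$; minor-preservation then transports the witnessing identities for relation membership. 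Since you explicitly defer the detailed verification to~\cite{wonderland}, which is exactly what the paper does, your proposal is adequate.
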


\begin{definition}
Let $\sigma\colon [n]\to [r]$ and $\tau\colon [m]\to [r]$ be functions. A \emph{height 1 identity} is an expression of the form:
\begin{equation*}
    \forall x_1,\dots,x_r( f(x_{\sigma(1)},\dots,x_{\sigma(n)}) \approx g(x_{\tau(1)},\dots,x_{\tau(m)})).
\end{equation*}
\end{definition}
Usually, we write $f(x_{\sigma(1)},\dots,x_{\sigma(n)}) \approx g(x_{\tau(1)},\dots,x_{\tau(m)})$ omitting the universal quantification, or even $f_\sigma\approx g_\tau$ for brevity.
A finite set of height 1 identities is called \emph{height 1 condition}.
A set of functions $F$ \emph{satisfies} a set of height 1 identities $\Sigma$, denoted $F\models\Sigma$, if there is a map $\tilde\cdot$ assigning to each function symbol occurring in $\Sigma$ a function in $F$ such that for all $f_\sigma\approx g_\tau\in\Sigma$ we have $\tilde f_\sigma= \tilde g_\tau$. Note that we make a distinction between the symbol $\approx$ and $=$ to emphasize the difference between an identity in a first-order formula and an equality of two specific objects
A set of functions $\mathcal C$ is called a \emph{clone} if it contains the projections and is closed under composition, i.e., for every $n$-ary $f\in\mathcal C$ and all $m$-ary $g_1,\dots,g_n\in\mathcal C$ we have that $f(g_1,\dots,g_n)\in\mathcal C$. Note that every polymorphism clone is a clone.

\begin{definition}\label{def:orderOnConditions}
Let $\Sigma$ and $\Gamma$ be sets of height 1 identities. We say that $\Sigma$ \emph{implies} $\Gamma$ (and that $\Sigma$ \emph{is stronger then} $\Gamma$), denoted $\Sigma\Rightarrow\Gamma$, if 
\[\mathcal C \models \Sigma \text{ implies } \mathcal C \models \Gamma \text{ for all clones $\mathcal C$.} \] 
If $\Sigma\Rightarrow\Gamma$ and $\Gamma\Rightarrow\Sigma$, then we say that $\Sigma$ and $\Gamma$ are \emph{equivalent}, denoted $\Sigma\Leftrightarrow\Gamma$. We define 
\[[\Sigma]\coloneqq\{\Sigma'\mid \Sigma'\text{ a set of height 1 identities, }\Sigma\Leftrightarrow\Sigma'\}.\]
\end{definition}  
We say that a height 1 condition is \emph{trivial} if it is satisfied by projections on a set $A$ that
contains at least two elements, or, alternatively, if it is implied by any height 1 condition.
We extend the definition of $\models$ and $\Rightarrow$ to single functions and single height 1 identities in the obvious way. Hence, if $f$ is a function, $\Sigma$ is a height 1 identity, and $\Gamma$ is a set of height 1 identities, then we can write $f\models\Sigma$ instead of $\{f\}\models\{\Sigma\}$ and $\Gamma\Rightarrow\Sigma$ instead of $\Gamma\Rightarrow\{\Sigma\}$.

Observe that if $\lambda\colon\Pol(\B)\to\Pol(\A)$ is minor-preserving and $f_\sigma= g_\tau$, then $\lambda(f)_\sigma=\lambda(g)_\tau$. It follows that minor-preserving maps preserve height 1 conditions.
A simple compactness argument shows the following corollary.
\begin{corollary}\label{cor:wond}
Let $\A$ and $\B$ be finite structures. Then
\begin{align*}
    \B\leq\A&&\text{iff}&&\text{$\Pol(\B)\models\Sigma$ implies $\Pol(\A)\models\Sigma$ for all height 1 conditions $\Sigma$}.
%    \B\leq\A&&\text{if and only if}&&
%\begin{tabular}{l}
%         \text{$\Pol(\B)\models\Sigma$ implies $\Pol(\A)\models\Sigma$}\\
%         \text{for all height 1 conditions $\Sigma$.}
%\end{tabular}
\end{align*}
\end{corollary}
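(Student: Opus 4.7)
The plan is to deduce the corollary from Theorem~\ref{thm:ppvsminor}, which equates $\B \leq \A$ with the existence of a minor-preserving map $\Pol(\B) \to \Pol(\A)$.

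For the forward direction I would assume $\B \leq \A$ and fix a minor-preserving $\lambda\colon\Pol(\B)\to\Pol(\A)$. If an assignment $f\mapsto\tilde f\in\Pol(\B)$ witnesses $\Pol(\B)\models\Sigma$, then $f\mapsto\lambda(\tilde f)$ is arity-preserving into $\Pol(\A)$, and the observation stated immediately before the corollary (that minor-preserving maps preserve height~1 identities) gives $\Pol(\A)\models\Sigma$.

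For the converse I would introduce, for each $f\in\Pol(\B)$, a formal function symbol $F_f$ of arity $\mathrm{ar}(f)$, and consider the (typically infinite) set of height~1 identities
\[
\Sigma \coloneqq \bigl\{\, F_{f_\sigma} \approx (F_f)_\sigma \,:\, f\in\Pol(\B),\ \sigma\colon[\mathrm{ar}(f)]\to[n],\ n\in\N^+ \,\bigr\}.
\]
The identity assignment $F_f\mapsto f$ witnesses $\Pol(\B)\models\Sigma$; hence every finite $\Sigma_0\subseteq\Sigma$ is a height~1 condition satisfied by $\Pol(\B)$, so by hypothesis $\Pol(\A)\models\Sigma_0$ via some arity-preserving partial assignment. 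A map $\lambda\colon\Pol(\B)\to\Pol(\A)$ witnesses $\Pol(\A)\models\Sigma$ if and only if $\lambda(f_\sigma)=\lambda(f)_\sigma$ for all $f$ and $\sigma$, i.e., if and only if $\lambda$ is minor-preserving. It therefore remains to patch the partial assignments into a global one.

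This is where compactness enters. Because $A$ is finite, $\Pol(\A)^{(n)}$ is finite for every $n$, so the space $X$ of all arity-preserving maps $\Pol(\B)\to\Pol(\A)$ is a product of finite discrete sets and hence compact by Tychonoff. For each $\iota\in\Sigma$ the set $X_\iota\subseteq X$ of maps satisfying $\iota$ is clopen, and the previous paragraph shows that every finite subfamily of $\{X_\iota\}_{\iota\in\Sigma}$ has non-empty intersection. Compactness yields a $\lambda\in\bigcap_{\iota\in\Sigma}X_\iota$, which is the desired minor-preserving map, and Theorem~\ref{thm:ppvsminor} then gives $\B\leq\A$. The only real obstacle is setting up $\Sigma$ so that satisfying it is equivalent to minor-preservation and then invoking finiteness of each $\Pol(\A)^{(n)}$ to apply Tychonoff; no deeper model-theoretic compactness is required.
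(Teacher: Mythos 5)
Your proof is correct and is precisely the ``simple compactness argument'' that the paper alludes to without spelling out: the forward direction is the observation stated immediately before the corollary, and the converse encodes minor-preservation as an infinite family of height~1 identities and glues the finite witnesses (supplied by the hypothesis) into a single minor-preserving map via compactness of the product of the finite sets $\Pol(\A)^{(n)}$, after which Theorem~\ref{thm:ppvsminor} applies. No gaps; the only implicit points (that $\Pol(\A)$ contains functions of every arity so partial assignments extend, and that indexing symbols by the polymorphisms themselves makes satisfaction of the whole family equivalent to minor-preservation) are immediate.
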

In general, showing that there is no minor-preserving map from $\Pol(\B)$ to $\Pol(\A)$ is a rather complicated task.
However, a recent result by Barto, Bul\'in, Krokhin, and Opr\v{s}al provides a concrete height 1 condition to check~\cite{jakubPCSP}.
We show that, for smooth digraphs $\mathbb G$, $\mathbb H$, whenever $\mathbb G\nleq\mathbb H$ there is a single height 1 identity with only one function symbol witnessing this. Height 1 identities of this form have been studied in the literature and are called \emph{loop conditions}~\cite{olsak-loop, olsak-strong}. 
 
\begin{definition}
Let $\sigma,\tau\colon [m] \to [n]$ be maps. A \emph{loop condition} is a height 1 identity of the form
\begin{equation*}\label{Sigma}
    f_\sigma \approx f_\tau.
\end{equation*}
\end{definition}
To any loop condition $\Sigma$ we can assign a digraph in a natural way. 
\begin{definition}
Let $\sigma,\tau\colon [m] \to [n]$ be maps and let $\Sigma$ be the loop condition, given by the identity $f_\sigma \approx f_\tau$. We define the digraph $\mathbb G_\Sigma\coloneqq
([n],\{(\sigma(i),\tau(i))\mid i\in [m])$. 
\end{definition}

\begin{example}\label{ex:G_Sigma} Some loop conditions and the corresponding digraphs.
\begin{itemize}
    \item Let $\Sigma_S$ be the loop condition $f(x,y,x,z,y,z)\approx f(y,x,z,x,z,y)$. Then $\mathbb G_{\Sigma_S}$ is isomorphic to $\KThree$.
    \item Let $\Sigma_3$ be the loop condition $f(x,y,z) \approx f(y,z,x)$. Then $\mathbb G_{\Sigma_3}$ is isomorphic to a directed cycle of length 3. \eoe
\end{itemize}
\end{example}
Observe that, for every digraph $\mathbb G$, all loop conditions $\Sigma$ such that $\mathbb G_{\Sigma}\simeq \mathbb G$ are equivalent. For convenience, we will from now on allow any finite set in place of $[m]$ and $[n]$. This allows us to construct from a graph a concrete loop condition.
\begin{definition}\label{def:SigmaG}
Let $\mathbb G=(V,E)$ be a digraph. We define the loop condition $\Sigma_{\mathbb G}\coloneqq (f_\sigma\approx f_\tau)$, where $\sigma,\tau\colon E\to V$ with $\sigma(u,v)=u$ and $\tau(u,v)=v$.
\end{definition} 
Observe that $\mathbb G_{\Sigma_\mathbb G}=\mathbb G$.
The name loop condition is justified by the following observation. If $\mathbb G$ is a finite graph such that $\Pol(\mathbb G)$ satisfies
$\Sigma$ and $\mathbb G_\Sigma \to \mathbb G$, then $\mathbb G$ has a loop. 
Consequently, if $\mathbb G$ does not have a loop, then $\Pol(\mathbb G)\not\models\Sigma_{\mathbb G}$. 
If $\mathbb G_\Sigma$ itself has a loop, then there is an $i$ with $\sigma(i)=\tau(i)$ and a structure $\A$ satisfies $\Sigma$ with the projection $\pi_i\colon\boldsymbol{a}\mapsto a_i$ and therefore $\Sigma$ is trivial. 
If $\mathbb G_\Sigma$ is a disjoint union of directed cycles, then we say that $\Sigma$ is a \emph{cyclic loop condition}. For instance, the identity $\Sigma_3$ in Example~\ref{ex:G_Sigma} is a cyclic loop condition.

\subsection{Free structures}\label{sec:freestructures}
Here we present another characterization of pp-constructability. This section can be safely skipped without compromising the understanding of the rest of the paper; its aim is just to
put our proof method for Lemma~\ref{thm:classificationPoset} into a larger context. The definition of free structure which we are going to adopt in this article was presented in~\cite{jakubPCSP}.

\begin{definition}\label{def:freestructure}
Let $\A$ be a finite relational structure on the set $A=[n]$, and $\mathcal C$ a clone (not necessarily related to $\A$). The \emph{free structure of} $\mathcal C$ \emph{generated by} $\A$ is a relational structure $\F_{\mathcal C}(\A)$ with the same signature as $\A$. Its universe $F_{\mathcal C}(A)$ consists of all $n$-ary operations in $\mathcal C$. For any relation of $\A$, say $R^{\A}=\{\boldsymbol{r}_1,\dots,\boldsymbol{r}_m\}\subseteq A^k$, the relation $R^{\F_{\mathcal C}(\A)}$ is defined as the set of all $k$-tuples $(f_1,\dots,f_k)\in F_{\mathcal C}(A)$ such that there exists an $m$-ary operation $g\in\mathcal C$ that satisfies
\[f_j(x_1,\dots,x_n) = g(x_{{r}_{1j}},\dots,x_{{r}_{mj}}) \text{ for each } j\in[k].\]
\end{definition}

The following theorem links the notion of free structure to the characterization of pp-constructability presented in Theorem~\ref{thm:ppvsminor}.

\begin{theorem}[Theorem 4.12 in~\cite{jakubPCSP}]\label{thm:freestructure}
Let $\A$ and $\B$ be finite relational structures. Then
\begin{align*}
    \B\leq\A&&\text{if and only if}&&\Pol(\B)\stackrel{\mathrm{minor}}{\to}\Pol(\A)&&\text{if and only if}&& \F_{\Pol(\B)}(\A)\to\A .
\end{align*}
\end{theorem}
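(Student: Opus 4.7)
The first equivalence, $\B \leq \A$ iff $\Pol(\B) \stackrel{\mathrm{minor}}{\to} \Pol(\A)$, is Theorem~\ref{thm:ppvsminor}, so the real task is to prove $\Pol(\B) \stackrel{\mathrm{minor}}{\to} \Pol(\A)$ iff $\F_{\Pol(\B)}(\A) \to \A$. Writing $\mathcal C \coloneqq \Pol(\B)$ and $A = [n]$, both directions use the canonical ``evaluation'' correspondence between minor-preserving maps out of $\mathcal C$ and homomorphisms out of the free structure $\F_{\mathcal C}(\A)$.

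For the forward direction, given a minor-preserving map $\lambda \colon \mathcal C \to \Pol(\A)$, I would define $h \colon \F_{\mathcal C}(\A) \to \A$ by $h(f) \coloneqq \lambda(f)(1,\dots,n)$; this makes sense because $F_{\mathcal C}(A)$ consists of the $n$-ary operations in $\mathcal C$. To verify that $h$ is a homomorphism, consider $(f_1,\dots,f_k) \in R^{\F_{\mathcal C}(\A)}$ witnessed by some $g \in \mathcal C$ with $f_j = g_{\sigma_j}$ for $\sigma_j(i) \coloneqq r_{ij}$. Minor-preservation yields $\lambda(f_j) = \lambda(g)_{\sigma_j}$, so $(h(f_1),\dots,h(f_k))$ is $\lambda(g)$ applied componentwise to the tuples $\boldsymbol{r}_1,\dots,\boldsymbol{r}_m \in R^{\A}$, which lies in $R^{\A}$ since $\lambda(g) \in \Pol(\A)$.

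For the reverse direction, given a homomorphism $h \colon \F_{\mathcal C}(\A) \to \A$, I would define $\lambda(f) \in \Pol(\A)$ for an $\ell$-ary $f \in \mathcal C$ by $\lambda(f)(a_1,\dots,a_\ell) \coloneqq h(f_\tau)$, where $\tau \colon [\ell] \to [n]$ is the map $i \mapsto a_i$. Minor-preservation $\lambda(f_\sigma) = \lambda(f)_\sigma$ then reduces to the routine identity $(f_\sigma)_\tau = f_{\tau \circ \sigma}$, since both sides evaluate to $h(f_{\tau_p \circ \sigma})$ where $\tau_p$ encodes the argument tuple.

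The main obstacle is verifying that $\lambda(f)$ is genuinely a polymorphism of $\A$. Given $\boldsymbol{a}_1,\dots,\boldsymbol{a}_\ell \in R^{\A} = \{\boldsymbol{s}_1,\dots,\boldsymbol{s}_t\}$, I must exhibit a witness in $\mathcal C$ for the $k$-tuple $(f_{\sigma_1},\dots,f_{\sigma_k})$ with $\sigma_j(i) \coloneqq a_{ij}$ lying in $R^{\F_{\mathcal C}(\A)}$. The natural candidate is $g \coloneqq f_\pi$ for any $\pi \colon [\ell] \to [t]$ satisfying $\boldsymbol{a}_i = \boldsymbol{s}_{\pi(i)}$; this $g$ is $t$-ary and a direct expansion confirms $g_{\tau_j} = f_{\sigma_j}$ for $\tau_j(i) \coloneqq s_{ij}$, as required by Definition~\ref{def:freestructure}. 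Applying $h$ and using that it is a homomorphism then yields $\lambda(f)(\boldsymbol{a}_1,\dots,\boldsymbol{a}_\ell) \in R^{\A}$, completing the argument.
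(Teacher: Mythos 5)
Your argument is correct. Note, however, that the paper does not prove this statement at all: Theorem~\ref{thm:freestructure} is imported verbatim from~\cite{jakubPCSP} (Theorem 4.12 there), so there is no in-paper proof to compare against. What you have written is a valid self-contained proof of the second equivalence via the standard evaluation correspondence: a minor-preserving map $\lambda$ yields the homomorphism $f\mapsto\lambda(f)(1,\dots,n)$, and conversely a homomorphism $h$ yields $\lambda(f)(a_1,\dots,a_\ell)=h(f_\tau)$; your verification that $\lambda(f)\in\Pol(\A)$ via the witness $g=f_\pi$ is exactly the point that needs care, and your index bookkeeping ($g_{\tau_j}=f_{\tau_j\circ\pi}=f_{\sigma_j}$) checks out against Definition~\ref{def:freestructure}. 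This is essentially the proof in the cited source. The only place the present paper engages with this material is the remark before Lemma~\ref{thm:PPvsLoop}, where the direction ``$\F_{\Pol(\B)}(\A)\to\A$ implies $\B\leq\A$'' is instantiated for disjoint unions of cycles by an explicit pp-power construction rather than through the minor-preserving map and Theorem~\ref{thm:ppvsminor}; that route has the advantage of exhibiting the pp-construction concretely, whereas yours is shorter and fully general.
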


\section{Smooth digraphs}\label{sec:smoothDigraphs}
A \emph{smooth digraph} is a directed graph $\mathbb G$ such that every vertex has an incoming and an outgoing edge.
Barto, Kozik, and Niven~\cite{BartoKozikNiven} showed  the following dichotomy for smooth digraphs.

\begin{theorem}\label{thm:BartoKozikNiven}
Let $\mathbb G$ be a finite smooth digraph. Then either $\mathbb G$ pp-constructs $\KThree$ or it is homomorphically equivalent to a finite disjoint union of directed cycles.
\end{theorem}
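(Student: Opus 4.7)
The plan is to combine the algebraic characterization of pp-constructability from Theorem~\ref{thm:ppvsminor} with two classical ingredients: (i) $\mathbb G$ fails to pp-construct $\KThree$ precisely when $\Pol(\mathbb G)$ is a \emph{Taylor clone}, and (ii) by the cyclic-term theorem of Barto and Kozik, every Taylor clone on a finite set of size $n$ contains a cyclic polymorphism of every prime arity $p>n$, i.e., an operation $c$ with $c(x_1,\dots,x_p)\approx c(x_2,\dots,x_p,x_1)$. So I would assume $\mathbb G\not\leq\KThree$ and fix such a cyclic polymorphism $c$ of prime arity $p>|V(\mathbb G)|$.

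Next I would reduce to the sink components of the condensation. Let $\mathbb G^*$ denote the disjoint union of the sink strongly connected components (SCCs) of $\mathbb G$; since $\mathbb G$ is smooth, every sink SCC is itself a strongly connected smooth digraph and hence contains directed cycles. The proof then splits into two claims:
\begin{enumerate}
    \item[(a)] Each sink SCC is homomorphically equivalent to $\Cyc{\ell}$, where $\ell$ is its algebraic length (the gcd of its cycle lengths).
    \item[(b)] $\mathbb G$ retracts onto $\mathbb G^*$.
\end{enumerate}

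For (a), I would use the standard fact that in a strongly connected smooth digraph $\mathbb H$ of algebraic length $\ell$, there is a natural homomorphism $\mathbb H\to\Cyc{\ell}$ sending each vertex to its level modulo $\ell$. To exhibit a reverse homomorphism on the core, one applies $c$ to tuples of $p$ vertices chosen within a common level class: because $p>|V(\mathbb G)|\geq\ell$ and $p$ is prime we have $\gcd(p,\ell)=1$, and cyclicity of $c$ collapses representatives of the same level into a single vertex, yielding the required retraction onto $\Cyc{\ell}$. For (b), from each $v\in V(\mathbb G)$ one picks walks of a common length $N$ ending at vertices $w_1,\dots,w_p$ in $\mathbb G^*$ (possible because $\mathbb G$ is smooth and every vertex can reach some sink SCC); cyclicity of $c$ together with the cyclic structure of $\mathbb G^*$ makes $h(v)\coloneqq c(w_1,\dots,w_p)$ well defined up to the relevant cycle automorphism and produces a homomorphism $\mathbb G\to\mathbb G^*$ that is the identity on $\mathbb G^*$.

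The main obstacle will be part (b): turning a cyclic polymorphism into an honest retraction onto the sinks requires the full absorption machinery of Barto and Kozik, including the Loop Lemma and the notion of absorbing subuniverses, because one must argue that walks in different source components ``synchronize'' under $c$ to land consistently in the same sink cycle. Part (a) is comparatively routine once the cyclic operation is in hand, while part (b) is the technically deepest step and cannot, to my knowledge, be circumvented elementarily; this is the reason the present article quotes Theorem~\ref{thm:BartoKozikNiven} rather than reproving it.
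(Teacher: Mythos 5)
First, a point of reference: the paper does not prove Theorem~\ref{thm:BartoKozikNiven} at all; it is imported verbatim from Barto, Kozik, and Niven~\cite{BartoKozikNiven}. So there is no internal proof to compare against, and your sketch has to stand on its own. It does not: step (b) is false as stated. A smooth Taylor digraph need not retract onto, or even admit a homomorphism to, the union of its sink strongly connected components. Take $\mathbb G$ with vertices $a,b,x,v_0,v_1,v_2,v_3$ and edges $a\to b$, $b\to a$, $b\to x$, $x\to v_0$, $v_0\to v_1\to v_2\to v_3\to v_0$. This digraph is smooth and homomorphically equivalent to $\Cyc 2$ (it contains the $2$-cycle on $\{a,b\}$, and the parity-of-level map is a homomorphism onto $\Cyc 2$), hence $\mathbb G\not\leq\KThree$ and $\Pol(\mathbb G)$ is Taylor. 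Its unique sink SCC is the $4$-cycle on $\{v_0,\dots,v_3\}$, yet $\mathbb G\not\to\Cyc 4$, since the $2$-cycle $a\to b\to a$ would have to map to a closed walk of length $2$ in $\Cyc 4$. Combined with your (a), your plan would even return the wrong answer on this instance ($\Cyc 4$ instead of $\Cyc 2$). The correct decomposition is by weakly connected components, not sink SCCs: each weak component $\mathbb H$ is homomorphically equivalent to $\Cyc{\ell}$, where $\ell$ is the algebraic length of all of $\mathbb H$, and the short cycle witnessing $\Cyc{\ell}\to\mathbb H$ may well sit in a source component, as it does above.

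Your placement of the technical difficulty is also inverted. Part (a) is not routine: for a strongly connected smooth digraph of period $\ell$, the level map $\mathbb H\to\Cyc{\ell}$ is easy, but the converse $\Cyc{\ell}\to\mathbb H$ requires a closed walk of length exactly $\ell$, and period $\ell$ alone does not provide one (a figure-eight of two cycles of lengths $4$ and $6$ has period $2$ and no closed walk of length $2$). After passing to the $\ell$-th relational power, producing that closed walk is precisely the Loop Lemma for smooth digraphs of algebraic length one, which is the technical heart of~\cite{BartoKozikNiven} and requires the absorption machinery. Your proposed mechanism --- applying a cyclic $c$ of prime arity $p>|V(\mathbb G)|$ to level-class representatives --- does not close up: to obtain a homomorphism $\Cyc{\ell}\to\mathbb H$ you must return after $\ell$ steps to a tuple that is a cyclic shift of the starting one, and arranging this is exactly the hard part; moreover, the standard tuple trick only shortens closed walks by factors of $p$, and your choice $p>|V(\mathbb G)|$ forces $p$ to be coprime to every cycle length, so it shortens nothing. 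Your preliminary reductions (Taylor iff $\mathbb G\not\leq\KThree$, cyclic terms of large prime arity) are fine, but the argument after that must be rebuilt around weak components, algebraic length, and the Loop Lemma.
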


It is known that $\KThree$ pp-constructs every finite structure (see, e.g.,~\cite{Bodirsky-HDR}). Hence, $[\KThree]$ is the bottom element of $\PPPoset$. From the previous theorem we know that every non-minimal element in $\PPPoset$, represented by a smooth digraph, contains a finite disjoint union of directed cycles. 
Let us denote the subposet of $\PPPoset$ consisting of the pp-constructability types of finite disjoint unions of directed cycles by $\SDPoset$.
In Section~\ref{sec:structures} we will provide a
characterization of the pp-constructability order on disjoint unions of directed cycles.
As we are always considering digraphs, we will usually drop the word \emph{directed}.

To any finite set $C\subset\N^+\!$ we associate a finite disjoint union of cycles $\C=(V,E)$ defined by 
\begin{align*}
    V \coloneqq \{(a,k) \mid a\in C, k \in \mathbb Z_a\} && \text{and} && E \coloneqq \{((a,k),(a,k+_a 1)) \mid a\in C, k \in \mathbb Z_a\}, 
\end{align*} where by $+_a$ we denote the addition modulo $a$.
For the sake of notation we will from now on write $+$ instead of $+_a$; it will be clear from the context to which addition we are referring to.
For any $a_1,\dots,a_n\in\N^+\!$ we write $\Cyc{a_1,\dots,a_n}$ for the finite disjoint union of cycles associated to the set $\{a_1,\dots,a_n\}$. Note that, for $a\in\N^+$\!, the structure $\Cyc a$ is a directed cycle of length $a$.
To any finite disjoint union of cycles $\C$ we associate the set $C\coloneqq\{a\mid \Cyc{a}\hookrightarrow \C\}$. 

We warn the reader that previously $C$ denoted the underlying set of the structure $\C$,
but from now on $\C$ itself will denote the structure as well as the underlying set and $C$
is the set defined above. We hope that this will not lead to any confusion. Also beware
that the finite disjoint union of cycles $\D$ associated to the set associated to a finite disjoint union of cycles $\C$ is
not necessarily isomorphic to $\C$. The structure $\C$ could have multiple copies of the same
cycle whereas $\D$ may not. However, $\C$ and $\D$ are homomorphically equivalent and thus
represent the same element in $\SDPoset$.

We define for every $k\in\N^+$ the pp-formula
\[x\stackrel{k}\to y\coloneqq \exists y_1,\dots,y_{k-1}( E(x,y_1)\wedge E(y_1,y_2)\wedge\dots\wedge E(y_{k-1},z)),\]
which we will often use in pp-constructions.
For a disjoint union of cycles $\C$, $u,v\in \C$, and $k\in\N^+\!$ we have that $\C\models u\stackrel{k}\to v$ if there is a  directed  path of length $k$ from $u$ to $v$. If $\C$ is clear from the context we abbreviate $\C\models u\stackrel{k}\to v$ by $ u\stackrel{k}\to v$. 
Note that for fixed $u$ and $k$ there is exactly one $v \in \C$ such that $ u\stackrel{k}\to v$; we denote this element $v$ by $u+k$.
The \emph{$k$-th relational power of $\C$} is  the digraph $(\C,\{(u,u+k)\mid u\in\C\})$. 
We denote the map $\C\to\C$, $u\mapsto u+1$ by $\sigma_{\C}$. Note that $\sigma_{\C} \in \operatorname{Aut}(\C)$.

Observe that the cyclic loop condition $\Sigma_{\C}$, introduced in Definition~\ref{def:SigmaG}, is $f\approx f_{\sigma_{\C}}$ and that the edge relation of $\C$ is $\{(u,\sigma_{\C}(u))\mid u\in\C\}$. If $\C$ is the finite disjoint union of cycles associated to the set $C$, then we write $\sigma_{\! C}$ and $\Sigma_{C}$ instead of $\sigma_{\C}$ and $\Sigma_{\C}$, respectively. Note that $\sigma_{\! C}(a,k) = (a,k+1)$ for any $a\in C$ and $k\in\{0,\dots,a-1\}$.

We want to remark that any finite power $\mathbb C^n$ of a disjoint union of cycles $\mathbb C$ is again a disjoint union of cycles. Hence, for an element $\boldsymbol{t}\in\mathbb C^n$, $k\in\N^+\!$, we have that $\boldsymbol{t}+k$ is already defined, furthermore \[\boldsymbol{t}+k=(t_1+k,\dots,t_n+k).\]

For $a,c\in\N^+\!$ define $a\dotdiv c\coloneqq\frac{a}{\gcd(a,c)}$. Note that $a\dotdiv c$ is always a natural number.
The choice of the symbol $\dotdiv$ is meant to emphasize that $a\dotdiv c$ is the numerator of the fraction $a\div c$
in reduced form. Roughly speaking, the operation $\dotdiv$ should be understood as ``divide as much as you can". 
The operation $\dotdiv$ has the following useful properties.
\begin{lemma}\label{lem:dotdiv}
For all $a,b,c\in\N^+\!$ we have
\begin{enumerate}
    \item $a\dotdiv (a\dotdiv c)=\gcd(a,c)$,
    \item $(a\dotdiv b)\dotdiv c=a\dotdiv (b\cdot c)$,
    \item $\gcd(a\dotdiv c,c\dotdiv a)=1$, and
    \item $a \dotdiv c=1$ if and only if $a$ divides $c$. 
\end{enumerate}
\end{lemma}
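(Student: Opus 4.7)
My plan is to prove all four items by unfolding the definition $a \dotdiv c = a/\gcd(a,c)$ and appealing to standard facts about $\gcd$. The main tool will be the coprime factorization: for any $a,c \in \N^+$, write $d = \gcd(a,c)$ and $a = d a'$, $c = d c'$ with $\gcd(a',c') = 1$. Then by definition $a \dotdiv c = a'$.

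For item (4), the chain $a \dotdiv c = 1 \iff a/\gcd(a,c) = 1 \iff \gcd(a,c) = a \iff a \mid c$ is immediate. For item (3), using the factorization above, $a \dotdiv c = a'$ and $c \dotdiv a = c'$, and $\gcd(a',c') = 1$ by construction. For item (1), again using the factorization, $a \dotdiv c = a'$, so $a \dotdiv (a \dotdiv c) = a \dotdiv a' = a / \gcd(a, a')$; since $a' \mid a$ we have $\gcd(a, a') = a'$, and therefore $a/a' = d = \gcd(a,c)$, as required.

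The only item that is not just a one-line unfolding is (2), and this is the step I expect to require a little care. The plan is to reduce it to the identity
\[
\gcd(a, bc) \;=\; \gcd(a,b)\cdot\gcd\!\bigl(a\dotdiv b,\; c\bigr),
\]
since the left side of (2) equals $(a/\gcd(a,b))/\gcd(a\dotdiv b, c)$ while the right side equals $a/\gcd(a,bc)$; the two are equal iff the displayed identity holds. To prove the identity, I write $d = \gcd(a,b)$, $a = d a'$, $b = d b'$ with $\gcd(a',b') = 1$, so that $a \dotdiv b = a'$. Then
\[
\gcd(a, bc) \;=\; \gcd(d a',\, d b' c) \;=\; d\cdot \gcd(a',\, b' c) \;=\; d\cdot \gcd(a', c),
\]
where the last equality uses $\gcd(a',b') = 1$ to cancel $b'$ from the second argument. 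Since $\gcd(a', c) = \gcd(a \dotdiv b, c)$, this gives the desired equation, and hence item (2).

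The main (and only mild) obstacle is the factoring manipulation in (2); everything else is essentially bookkeeping. I would present items (4), (3), (1) first in that order since each is a direct computation from the coprime factorization, and conclude with (2), whose proof reuses the same factorization together with the standard lemma that $\gcd(a', b'c) = \gcd(a', c)$ whenever $\gcd(a', b') = 1$.
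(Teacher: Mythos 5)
Your proof is correct and follows essentially the same route as the paper, which simply unfolds the definition $a\dotdiv c = a/\gcd(a,c)$ to verify item (1) and leaves the remaining items to the reader. You supply the omitted details correctly; in particular, your reduction of item (2) to the identity $\gcd(a,bc)=\gcd(a,b)\cdot\gcd(a\dotdiv b,c)$ via the coprime factorization $a=da'$, $b=db'$ is valid and is exactly the kind of bookkeeping the paper expects the reader to carry out.
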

\begin{proof}
Simply applying the definitions we obtain:
\begin{align*}
    a\dotdiv (a\dotdiv c)=\frac{a}{\gcd(a,a\dotdiv c)}=\frac{a}{\gcd\left(a,\frac{a}{\gcd(a,c)}\right)}
    =\frac{a}{\frac{a}{\gcd(a,c)}}
    =\gcd(a,c)
\end{align*}
The reader can verify the other statements.
\end{proof}
For a finite disjoint union of cycles $\C$ and $c\in\N^+\!$, we let $\C\dotdiv c$ denote the finite disjoint union of cycles associated to the set $C\dotdiv c\coloneqq\{a\dotdiv c\mid a\in C\}$.

\begin{lemma}\label{lem:relPowEqualsCdotdivc}
Let $\C$ be a finite disjoint union of cycles and $c\in\N^+\!$. The $c$-th relational power of $\C$ is homomorphically equivalent to $\C\dotdiv c$.
\end{lemma}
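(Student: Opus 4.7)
The plan is to reduce to the case of a single cycle and then exploit the paper's earlier remark that two finite disjoint unions of cycles which give rise to the same associated set are already homomorphically equivalent. So the whole task collapses to identifying the set of cycle lengths occurring in the $c$-th relational power of $\C$.

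First I would analyse a single cycle. The $c$-th relational power of $\Cyc a$ has vertex set $\mathbb Z_a$ and edge set $\{(k,k+c)\mid k\in\mathbb Z_a\}$ (addition modulo $a$). Every vertex has in- and out-degree $1$, so this is a disjoint union of cycles, and the cycle through $k$ has length equal to the smallest $m\in\N^+\!$ with $mc\equiv 0\pmod a$, namely $a/\gcd(a,c)=a\dotdiv c$. Thus the $c$-th relational power of $\Cyc a$ is isomorphic to $\gcd(a,c)$ disjoint copies of $\Cyc{a\dotdiv c}$.

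Next I would lift this to $\C$. Because taking a relational power is done independently on each connected component, the $c$-th relational power of $\C$ is the disjoint union, over $a\in C$, of $\gcd(a,c)$ copies of $\Cyc{a\dotdiv c}$. Hence its associated set (in the sense of the paper, i.e., the set of $b$ such that $\Cyc b$ embeds) is exactly
\[\{a\dotdiv c\mid a\in C\}=C\dotdiv c,\]
which by definition is the associated set of $\C\dotdiv c$.

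Finally, I would invoke the remark made just before the lemma: any two finite disjoint unions of cycles with the same associated set are homomorphically equivalent (map each cycle in the domain to a copy of an isomorphic cycle in the target, and vice versa). Applying this to the $c$-th relational power of $\C$ and to $\C\dotdiv c$ gives the claim. The only delicate point is bookkeeping: $\C$ may contain several copies of the same $\Cyc a$ and several $a\in C$ may share the same value $a\dotdiv c$, but since the associated set automatically collapses such duplicates, this does not affect the argument and is the only place where one has to be slightly careful.
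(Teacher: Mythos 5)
Your proposal is correct and follows essentially the same route as the paper: reduce to a single cycle, compute that the $c$-th relational power of $\Cyc a$ consists of $\gcd(a,c)$ copies of $\Cyc{a\dotdiv c}$, and conclude via homomorphic equivalence of disjoint unions of cycles with the same associated set. You merely spell out the bookkeeping (assembling components and collapsing duplicates) that the paper leaves implicit.
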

\begin{proof}
Note that it suffices to verify the claim for cycles.
Let $\C=\Cyc a$. Then $\C\dotdiv c$ consists of one cycle of length $a\dotdiv c$ and the  $c$-th relational power of $\C$ consists of $\frac{a}{a\dotdiv c}=\gcd(a,c)$ many cycles of length $a\dotdiv c$. Hence they are homomorphically equivalent.
\end{proof}

\begin{example}\label{ex.division}
The structure $\Cyc 6$ pp-constructs $\Cyc 3=\Cyc6\dotdiv2$. To see this, consider the first pp-power of $\Cyc 6$ given by the pp-formula:
\[ \Phi_E(x,y)\coloneqq x\stackrel{2}{\to}y.\]
We obtain a structure that consists of two disjoint copies of $\Cyc 3$, which is homomorphically equivalent to $\Cyc 3$; see Figure~\ref{fig:6pp3}. 
\begin{figure}
    \centering
    \begin{tikzpicture}
    \cycle{6}{(-3,0)}
    
    \def \n {3}
\def \radius {{2*3*0.035+0.15}}

\foreach \s in {1,...,\n}
{
  \draw ({360/\n * (\s - 1)}:\radius) circle (1pt);
  \draw ({360/\n * (\s - 1)+180/\n}:\radius) circle (1pt);
  \draw ({360/\n * (\s - 1)}:\radius) edge[>=stealth',arrows=-{>[bend]},bend left=20,shorten >=1mm,shorten <=1mm] ({360/\n * (\s)}:\radius);
  \draw ({360/\n * (\s - 1)+180/\n}:\radius) edge[>=stealth',arrows=-{>[bend]},bend right = 80,looseness=1.7,shorten >=1mm,shorten <=1mm] ({360/\n * (\s)+180/\n}:\radius);
}

    \cycle{3}{(3,0.45)}
    \cycle{3}{(3,-0.45)}
    
    \cycle{3}{(6,0)}
    
    \node at (-1.5,0) {$\leq$};
    \node at (1.5,0) {$=$};
    \node at (4.5,0) {$\equiv$};
    
    \end{tikzpicture}
    \caption{The structure $\Cyc6$ pp-constructs $\Cyc3$.}
    \label{fig:6pp3}
\end{figure}

Note that $\Cyc 6$ pp-constructs $\Cyc 2$ with the formula $\Phi_E(x,y)\coloneqq x\stackrel{3}{\to}y$.
In general, the first pp-power of a disjoint union of cycles $\C$ given by the formula $x\stackrel{c}{\to}y$ is the $c$-th relational power of $\C$, which is, by Lemma~\ref{lem:relPowEqualsCdotdivc}, homomorphically equivalent to $\C\dotdiv c$. Therefore, $\C\leq \C\dotdiv c$.
\eoe
\end{example}

\section{Disjoint unions of prime cycles and prime cyclic loop conditions}\label{sec:primefun}
Before we characterize the posets of finite disjoint union of directed cycles ordered by pp-constructability, $\SDPoset$, and cyclic loop conditions ordered by strength (cf. Definition~\ref{def:orderOnConditions}), in Sections \ref{sec:conditions} and \ref{sec:structures}, we study the following subposets.

\begin{definition}
A cyclic loop condition $\Sigma_P$ is called a \emph{prime cyclic loop condition} if $P$ is a set of primes. 
A disjoint unions of cycles $\mathbb P$ is called a \emph{disjoint unions of prime cycles} if $P$ is a set of primes.
We define the posets $\mPCL$ and $\PCPoset$ as 
\begin{align*}
    \mPCL&\coloneqq (\{[\Sigma] \mid \Sigma \text{ a prime cyclic loop condition}\},\Leftarrow),
    \\
    \PCPoset&\coloneqq (\{[\mathbb P]\mid \mathbb P\text{ a disjoint union of prime cycles}\},\leq).
\end{align*}

\end{definition} 

This section has two goals.
Firstly, we want to understand $\mPCL$ and $\PCPoset$ as these posets will be used to describe $\SDPoset$.
Secondly, this section gently introduces the reader to techniques used in Sections  \ref{sec:conditions} and \ref{sec:structures}  without many of the difficulties that come with the more general case.

To gain a better understanding of the objects we are working with, we reproduce a well-known fact about cycles and cyclic loop conditions, presented in Lemma~\ref{lem:pcSatisfyPclc}. We start with a simple example.
\begin{example}\label{exa:C3notmodelsS3}
Consider the digraph $\Cyc 3$. Observe that $\Pol(\Cyc 3) \models \Sigma_2$ as witnessed by the polymorphism
\[f(x,y) = 2\cdot(x + y) \pmod 3.\]
On the other hand, $\Pol(\Cyc 3) \not\models \Sigma_3$. Assume that $f$ is a polymorphism of $\Cyc 3$ satisfying $\Sigma_3$, then
\[
\begin{tikzpicture}
\node at (0,0.9) {$f((3,0),(3,1),(3,2))$};
\node at (0,0) {$f((3,1),(3,2),(3,0))$};
\node at (2.0,0.9) {$=a$};
\node at (2.0,0) {$=a$};
\path[<-,>=stealth'] 
    (-0.9,0.25) edge (-0.9,0.6)
    (0.15,0.25) edge (0.15,0.6)
    (1.2,0.25) edge (1.2,0.6)
    (2.15,0.25) edge (2.15,0.6)
    ;
\end{tikzpicture}\]
and  $(a,a)$ is a loop, a contradiction.
\eoe
\end{example}

\begin{lemma}\label{lem:pcSatisfyPclc}
Let $p,q$ be primes. Then $\Pol(\Cyc{q})\models\Sigma_p$ if and only if $p\neq q$.
\end{lemma}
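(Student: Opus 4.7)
The plan is to prove both directions by explicit construction. The condition $\Sigma_p$ is just $f(x_0,\dots,x_{p-1})\approx f(x_1,\dots,x_{p-1},x_0)$, i.e., invariance of $f$ under the cyclic shift of its $p$ arguments, and $\Cyc q$ can be identified with $\mathbb Z_q$ with edges $k\to k+1$.

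For the forward direction (contrapositive), I would repeat the argument of Example~\ref{exa:C3notmodelsS3} with $p$ in place of $3$. Suppose $p=q$ and assume for contradiction that some $f\in\Pol(\Cyc p)$ satisfies $\Sigma_p$. Consider the tuple $\boldsymbol{t}=((p,0),(p,1),\dots,(p,p-1))\in \Cyc p^p$. Then $\boldsymbol{t}+1$ is exactly the cyclic shift of $\boldsymbol{t}$, so $\Sigma_p$ gives $f(\boldsymbol{t})=f(\boldsymbol{t}+1)$. On the other hand $(\boldsymbol{t},\boldsymbol{t}+1)$ is in the edge relation of $\Cyc p^p$ coordinate-wise, and since $f$ is a polymorphism, $(f(\boldsymbol t),f(\boldsymbol t+1))$ is an edge of $\Cyc p$, i.e., a loop, which is impossible.

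For the backward direction, suppose $p\ne q$. Since both are prime, $\gcd(p,q)=1$, so $p$ is invertible modulo $q$; let $c\in\mathbb Z_q$ with $cp\equiv 1\pmod q$. Define
\[ f\colon \Cyc q^{\,p}\to\Cyc q,\qquad f(x_0,\dots,x_{p-1}) \coloneqq c\cdot(x_0+\cdots+x_{p-1}) \pmod q, \]
under the identification $(q,k)\leftrightarrow k\in\mathbb Z_q$. Since $f$ is symmetric in its arguments, it is in particular invariant under cyclic shifts, so $f\models\Sigma_p$. To see that $f\in\Pol(\Cyc q)$, it suffices (by the description of $\Cyc q$ as $\mathbb Z_q$ with edges $k\to k+1$) to check $f(\boldsymbol{x}+1)=f(\boldsymbol{x})+1$, and indeed
\[ f(\boldsymbol{x}+1)=c\cdot\bigl(x_0+\cdots+x_{p-1}+p\bigr)=f(\boldsymbol{x})+cp=f(\boldsymbol{x})+1\pmod q. \]

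The argument has no real obstacle: the forward direction reuses the loop-at-the-diagonal trick already displayed for $\Cyc 3$, and the backward direction uses nothing beyond primality of $p$ and $q$ via the invertibility of $p$ modulo $q$. The only point worth spelling out is that the polymorphism $f$ is well-defined precisely because $p\ne q$, which is what makes $cp\equiv 1\pmod q$ solvable—this is the content of the lemma.
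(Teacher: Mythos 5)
Your proposal is correct and follows essentially the same route as the paper: the loop-at-the-shifted-diagonal argument for $p=q$, and the cyclic polymorphism $\boldsymbol{x}\mapsto c\cdot(x_0+\dots+x_{p-1})\bmod q$ with $cp\equiv 1\pmod q$ for $p\neq q$. (You even silently fix a typo in the paper, which writes the reduction modulo $p$ instead of modulo $q$.)
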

\begin{proof}
If $p\neq q$, then there is an $n\in \N^+\!$ such that $p\cdot n\equiv_q1$. The map
\[f(x_1,\dots,x_p) = n\cdot(x_1 +\ldots + x_p) \pmod q\]
is a polymorphism of $\Cyc q$ satisfying $\Sigma_p$.

Assume that $f$ is a polymorphism of $\Cyc q$ satisfying $\Sigma_p$, then
\[f((q,0),\dots,(q,q-2),(q,q-1)) = a = f((q,1),\dots,(q,q-1),(q,0))\]
and $(a,a)$ is a loop, a contradiction.
\end{proof}
From Corollary~\ref{cor:wond} it is easy to see that the digraphs $\Cyc 2, \Cyc 3, \Cyc 5,\dots$ represent an infinite antichain in $\PCPoset$ and that the conditions $\Sigma_2,\Sigma_3,\Sigma_5,\dots$ represent an infinite antichain in $\mPCL$. 
In order to describe these two posets we generalize Lemma~\ref{lem:pcSatisfyPclc} to disjoint unions of prime cycles in Lemma~\ref{lem:duofpcSatisfyPclc}. 
First, we present an example that (hopefully) helps to better understand the polymorphisms of disjoint unions of cycles. 

\begin{example}
Let us examine the binary polymorphisms of $\Cyc{2,3}$ (see Figure~\ref{fig:23poly}). 
 
\begin{figure}
    \centering
    \begin{tikzpicture}
    \cycle{2}{(0,-0.4)}
    \cycle{2}{(0,0.4)}
    
    \cycle{3}{(2,0)}
    \cycle{3}{(2,1)}
    \cycle{3}{(2,-1)}
    
    \cycle{6}{(1,-0.5)}
    \cycle{6}{(1,0.5)}
    
    \path (3.5,0) edge[->,>=stealth'] (5.5,0);
    
    \cycle{2}{(7,-0.2)}
    \cycle{3}{(7.8,0.3)}
    \end{tikzpicture}
    \caption{Just a nice image to keep you motivated to understand the set $\Hom(\Cyc{2,3}^2,\Cyc{2,3})$, the binary polymorphisms of $\C_{2,3}$ are homomorphisms from the left digraph to $\C_{2,3}$. }
    \label{fig:23poly}
\end{figure}
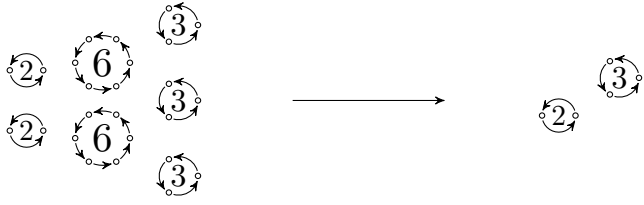 
Every element in $\Hom(\Cyc{2,3}^2,\Cyc{2,3})$ is build from homomorphisms of the connected components into $\Cyc{2,3}$. 
Hence, $\left|\Hom(\Cyc{2,3}^2,\Cyc{2,3})\right| = 2^2\cdot  3^3\cdot 5^2$.
More generally, let $\C$ be a finite disjoint union of cycles and $n\in \N^+$\!. Let $\textbf G$ be the subgroup of the symmetric group (even automorphism group) on $\C^n$ generated by $+1\colon\boldsymbol t\mapsto(\boldsymbol t+1)$.   
For every  orbit  of $\C^n$ under $\textbf G$ pick  a representative and denote the set of representatives by $T$. Note that every connected component of $\C^n$ contains exactly one element of $T$. Let $f\colon T\to\C$ be such that for every $\boldsymbol t=((a_1,k_1),\dots,(a_n,k_n))\in T$ we have that $f(\boldsymbol t)$ lies in a cycle whose length divides $\ell_{\boldsymbol t}\coloneqq\lcm(a_1,\dots,a_n)$. Since every $\boldsymbol t\in T$ lies in a cycle of length $\ell_{\boldsymbol t}$ we have that $f$ can be uniquely extended to a polymorphism $\tilde f$ of $\C$. Furthermore, 
\[\tilde f((+1)^d(\boldsymbol t))=f(\boldsymbol t)+d\]
for all $\boldsymbol t\in T$ and $d\in\N$. Note that all $n$-ary polymorphisms of $\C$ can be constructed this way.
\eoe
\end{example}

\begin{lemma}\label{lem:duofpcSatisfyPclc}
Let $\mathbb Q$ be a disjoint union of prime cycles and let $p$ be a prime. Then $\Pol(\mathbb Q)\models \Sigma_p$ if and only if $p\notin Q$.
\end{lemma}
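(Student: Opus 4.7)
For the \emph{only if} direction, I would adapt the argument in Example~\ref{exa:C3notmodelsS3}. Suppose $p \in Q$, so $\Cyc p \hookrightarrow \mathbb Q$. Pick $\boldsymbol a \in \mathbb Q^{\Cyc p}$ with $a_v := v$. Then $\boldsymbol a_{\sigma_p} = \boldsymbol a + 1$ inside $\mathbb Q^{\Cyc p}$, so $(\boldsymbol a, \boldsymbol a_{\sigma_p})$ is an edge there. If some $f \in \Pol(\mathbb Q)$ satisfied $\Sigma_p$, applying $f$ would produce the edge $(f(\boldsymbol a), f(\boldsymbol a))$, i.e.\ a loop in $\mathbb Q$; this is impossible since $\mathbb Q$ is a disjoint union of cycles of length at least $2$.

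For the \emph{if} direction, I would assume $p \notin Q$ and construct $f \in \Pol(\mathbb Q)$ witnessing $\Sigma_p$ by defining it separately on each orbit of connected components of $\mathbb Q^{\Cyc p}$ under the coordinate permutation $\boldsymbol a \mapsto \boldsymbol a_{\sigma_p}$. Since this permutation has prime order $p$, every such orbit has size $1$ or $p$. The key structural claim is that the size-$1$ orbits are exactly the diagonal components $\Delta_q := \{(t,\dots,t) : t \in \Cyc q\}$ for $q \in Q$: if $\boldsymbol a_{\sigma_p} = \boldsymbol a + d$ holds on some component whose coordinate cycle-lengths are $(q_v)_{v \in \Cyc p}$, then all $q_v$ must be equal to a single prime $q \in Q$ and $p \cdot d \equiv 0 \pmod q$; as $\gcd(p,q) = 1$, this forces $d = 0$, so $\boldsymbol a$ is constant.

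Given this, I would define $f$ as follows: on each diagonal set $f(t,\dots,t) := t$; on a size-$p$ orbit $\{\mathcal O, \sigma_p(\mathcal O), \dots, \sigma_p^{p-1}(\mathcal O)\}$, pick any $q \in Q$ dividing the cycle-length of $\mathcal O$ (for instance, any cycle-length appearing in a tuple of $\mathcal O$), choose an arbitrary cycle homomorphism $f|_{\mathcal O} \colon \mathcal O \to \Cyc q$, and extend by $f(\sigma_p^k(\boldsymbol a)) := f|_{\mathcal O}(\boldsymbol a)$ for $1 \le k < p$. Distinctness of the $\sigma_p^k(\mathcal O)$ ensures well-definedness; edge-preservation and $\Sigma_p$ then follow because $\sigma_p$ commutes with the $+1$ action. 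The only non-routine step is the structural claim about fixed components, which is handled cleanly by the primality of $p$ together with the hypothesis $p \notin Q$.
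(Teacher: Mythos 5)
Your proof is correct and takes essentially the same approach as the paper: the forward direction is the identical loop argument, and the backward direction is the same orbit-by-orbit construction, since your $\langle\sigma_p\rangle$-orbits of connected components coincide with the paper's orbits of tuples under the group generated by $\sigma_p$ and $+1$. Your structural claim that the only fixed components are the diagonals is precisely the paper's well-definedness computation ($p\cdot d\equiv_q 0$ and $\gcd(p,q)=1$ force $d\equiv_q 0$), so the two arguments differ only in presentation.
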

\begin{proof}
We assume without loss of generality that $\mathbb Q$ is the disjoint union of cycles associated to $Q$.

($\Rightarrow$) 
Let $f$ be a polymorphism of $\mathbb Q$ satisfying $\Sigma_p$. Assume $p\in Q$. Then 
\[f((p,0),\dots,(p,p-2),(p,p-1)) = a = f((p,1),\dots,(p,p-1),(p,0))\]
and $(a,a)$ is a loop, a contradiction.

($\Leftarrow$)
For this direction we construct a polymorphism $f\colon \mathbb Q^{\Cyc p} \to \mathbb Q$ of $\mathbb Q$ satisfying $\Sigma_p$.
Let $\textbf G$ be the subgroup of the symmetric group on $\mathbb Q^{\Cyc p}$ generated by $\sigma_{\! p}\colon \boldsymbol t \mapsto \shiftTuple{\boldsymbol t }{\sigma_{\! p}}$ and $+1\colon \boldsymbol t\mapsto (\boldsymbol t+1)$. Recall that 
\begin{align*}
    (\shiftTuple{\boldsymbol t} {\sigma_{\! p}})_{(p,k)}=\boldsymbol t_{(p,k+1)},&& +1=\sigma_{ (\mathbb Q^{\Cyc p})}, && ((+1)(\boldsymbol t))_{(p,k)}=\boldsymbol t_{(p,k)}+1.
\end{align*}
Hence, $\sigma_{\! p}\circ (+1)=(+1)\circ \sigma_{\! p}$, $\textbf G$ is commutative, and every element of $\textbf G$ is of the form $\sigma_{\! p}^c\circ (+1)^d$ for some $c, d \in\N$.
For every orbit of $\mathbb Q^{\Cyc p}$ under $\textbf G$ pick a representative and denote the set of representatives by $T$. Let $\boldsymbol t\in T$ and $(q,k)=\boldsymbol t_{(p,0)}$.
Note that the orbit of $\boldsymbol t$ is a disjoint union of cycles of a fixed length $n$ and that $q$ divides $n$. Define $f$ on the orbit of $\boldsymbol t$ as 
\[f\left((\sigma_{\! p}^c\circ (+1)^d)(\boldsymbol t)\right)
=f\left(\shiftTuple {(\boldsymbol t+d)}{\sigma_{\! p}^c}\right)
\coloneqq (q,d)\text{ for all $c,d$.}\] 
To show that $f$ is well defined on the orbit of $\boldsymbol t$ it suffices to prove that $\shiftTuple{(\boldsymbol t+d)}{\sigma_{\! p}^c}=\shiftTuple{(\boldsymbol t+\ell)}{\sigma_{\! p}^{m}}$ implies $d\equiv_p\ell $ for all $d,c,\ell,m\in\N$. Without loss of generality we can assume that $m=\ell=0$. 
Observe that $\boldsymbol t=\shiftTuple{(\boldsymbol t+d)}{\sigma_{\! p}^c}$ implies
$\boldsymbol t_{(p,k\cdot c)}=\boldsymbol t_{(p,(k+1)\cdot c)}+d$ for all $k$.
Considering that $p\cdot c\equiv_{p}0$ we have 
\[\boldsymbol t_{(p,0)}=\boldsymbol t_{(p,c)}+d=
\dots=\boldsymbol t_{(p,(p-1)\cdot c)}+(p-1)\cdot d=\boldsymbol t_{(p,0)}+p\cdot d.\] 
Hence $p\cdot d\equiv_{q}0$. Since $p\notin Q$ we have that $p$ and $q$ are coprime. Therefore $d\equiv_{q}0$ as desired.

Repeating this for every $\boldsymbol t\in T$ defines $f$ on $\mathbb Q^{\Cyc p}$.
If $\boldsymbol r\stackrel{1}\to \boldsymbol s$, then $\boldsymbol s=(\boldsymbol r+1)=(+1)(\boldsymbol r)$ and $f(\boldsymbol r)\stackrel{1}\to f(\boldsymbol s)$, hence $f$ is a polymorphism of $\mathbb Q$. Furthermore $f=f_{\sigma_{\! p}}$ by definition.
\end{proof}

Understanding whether a loop condition implies another one is helpful for describing $\mPCL$ and $\PCPoset$.
This problem has already been studied before.
A sufficient condition is given in the following result from Ol\v{s}ak~\cite{olsak-loop}. 
\begin{theorem}[Corollary 1 in~\cite{olsak-loop}]\label{thm:loopolsak}
Let $\Sigma, \Gamma$ be loop conditions. If  $\mathbb{G}_{\Sigma}\to \mathbb{G}_{\Gamma}$, then $\Sigma\Rightarrow\Gamma$.
\end{theorem}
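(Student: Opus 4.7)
My plan is to unfold the two loop conditions, read off a combinatorial witness from the homomorphism $\mathbb{G}_\Sigma \to \mathbb{G}_\Gamma$, and then build a witness for $\Gamma$ as an appropriate minor of any witness for $\Sigma$. So I would write $\Sigma$ as $f_\sigma \approx f_\tau$ with $\sigma, \tau \colon [m] \to [n]$ and $\Gamma$ as $g_\alpha \approx g_\beta$ with $\alpha, \beta \colon [m'] \to [n']$. The homomorphism is a map $h \colon [n] \to [n']$ such that for every $i \in [m]$ the pair $(h(\sigma(i)), h(\tau(i)))$ is an edge of $\mathbb{G}_\Gamma$; for each such $i$ I would pick some $\phi(i) \in [m']$ with $\alpha(\phi(i)) = h(\sigma(i))$ and $\beta(\phi(i)) = h(\tau(i))$, obtaining a map $\phi \colon [m] \to [m']$ satisfying $h \circ \sigma = \alpha \circ \phi$ and $h \circ \tau = \beta \circ \phi$. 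These two identities are what will carry the argument.

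Given any clone $\mathcal{C} \models \Sigma$, witnessed by an $m$-ary $f \in \mathcal{C}$ with $f_\sigma = f_\tau$, I would set $g \coloneqq f_\phi$, which has arity $m'$ and lies in $\mathcal{C}$ because clones are closed under minors, and then show $g_\alpha = g_\beta$. The calculation would fix $\boldsymbol{a} \in A^{n'}$, set $\boldsymbol{b} \coloneqq \boldsymbol{a}_h \in A^n$, and run
\begin{equation*}
g_\alpha(\boldsymbol{a}) \;=\; f_\phi(\boldsymbol{a}_\alpha) \;=\; f(\boldsymbol{a}_{\alpha \circ \phi}) \;=\; f(\boldsymbol{a}_{h \circ \sigma}) \;=\; f(\boldsymbol{b}_\sigma),
\end{equation*}
where the second equality is associativity of the minor construction and the third uses $\alpha \circ \phi = h \circ \sigma$. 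The analogous chain with $\tau, \beta$ in place of $\sigma, \alpha$ gives $g_\beta(\boldsymbol{a}) = f(\boldsymbol{b}_\tau)$, and the hypothesis $f_\sigma = f_\tau$ then forces $g_\alpha(\boldsymbol{a}) = g_\beta(\boldsymbol{a})$; hence $\mathcal{C} \models \Gamma$, as required.

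There is no genuine obstacle beyond bookkeeping, but the one step worth highlighting is the specialisation $\boldsymbol{b} = \boldsymbol{a}_h$: the tuple $\boldsymbol{a}$ naturally lives in $A^{n'}$, whereas the identity $f_\sigma = f_\tau$ consumes tuples in $A^n$, and the homomorphism $h$ is precisely the device that converts one into the other. Everything else is juggling compositions of maps, and nothing in the argument uses that $\Sigma, \Gamma$ have a single function symbol, which hints that the same recipe will give the more general implication-by-morphism results the paper is heading towards.
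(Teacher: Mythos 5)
Your proof is correct and is precisely the argument the paper alludes to (and that Ol\v{s}\'ak gives): the paper's one-line sketch ``if $h\colon\mathbb{G}_\Sigma\to\mathbb{G}_\Gamma$ and $f\models\Sigma$ then $f_h\models\Gamma$'' is exactly your construction, with $f_h$ being shorthand for the minor $f_\phi$ taken along the edge map $\phi$ induced by $h$. The bookkeeping identities $\alpha\circ\phi=h\circ\sigma$ and $\beta\circ\phi=h\circ\tau$ and the substitution $\boldsymbol{b}=\boldsymbol{a}_h$ are all in order, so there is nothing to add.
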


The idea of the proof is to show that if $h\colon \mathbb{G}_{\Sigma}\to \mathbb{G}_{\Gamma}$ and $f\models\Sigma$, then $f_h\models\Gamma$. 
We can use Theorem~\ref{thm:loopolsak} to easily deduce implications between cyclic loop conditions. However, for convenience, we state some results explicitly.
\begin{lemma}\label{lem:orderOnLoopcond} Let $C\subset\N^+\!$ be  finite and $c,d\in\N^+$\!.
\begin{enumerate}
    \item We have $\Sigma_C\Rightarrow\Sigma_{C\dotdiv c}$. In particular, $\Sigma_{C\dotdiv c}\Rightarrow \Sigma_{C\dotdiv (c\cdot d)}$.
    \item We have $\Sigma_C\Rightarrow\Sigma_{C\cupdot \{d\}}$.
    \item If $d$ is a multiple of an element of $C$, then $\Sigma_{C\cupdot \{d\}}\Leftrightarrow\Sigma_C$.
\end{enumerate}
\end{lemma}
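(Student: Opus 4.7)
The plan is to apply Theorem~\ref{thm:loopolsak} three times, once for each item, since for any $D\subset\N^+\!$ the graph $\mathbb G_{\Sigma_D}$ is just $\C_D$ (by Definition~\ref{def:SigmaG} together with the observation that $\mathbb G_{\Sigma_\mathbb G}=\mathbb G$). Thus each implication $\Sigma_A\Rightarrow\Sigma_B$ will be reduced to exhibiting a homomorphism $\C_A\to\C_B$. The only cycle fact I will need is the standard one: there is a homomorphism from a directed cycle of length $a$ to a directed cycle of length $b$ if and only if $b\mid a$.

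For item 1, I want $\C\to\C\dotdiv c$. For every $a\in C$ the number $a\dotdiv c=a/\gcd(a,c)$ divides $a$, so the $a$-cycle maps to the $(a\dotdiv c)$-cycle by $(a,k)\mapsto (a\dotdiv c,\, k\bmod(a\dotdiv c))$. Combining these for all $a\in C$ yields the desired homomorphism. The ``in particular'' statement then follows by applying the first part once with $C$ and once with $C\dotdiv c$, together with Lemma~\ref{lem:dotdiv}(2), which gives $(C\dotdiv c)\dotdiv d = C\dotdiv(c\cdot d)$.

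For item 2, I want $\C\to \C_{C\cupdot\{d\}}$; this is literally the inclusion of $\C$ as a union of connected components of $\C_{C\cupdot\{d\}}$, which is trivially a homomorphism.

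For item 3, the direction ``$\Leftarrow$'' is item~2. For ``$\Rightarrow$'', I want $\C_{C\cupdot\{d\}}\to\C$; by assumption some $a\in C$ divides $d$, so the new $d$-cycle admits a homomorphism to the $a$-cycle of $\C$, while every other component of $\C_{C\cupdot\{d\}}$ maps identically onto its copy in $\C$. Taken together these give the required homomorphism. There is no real obstacle: once Theorem~\ref{thm:loopolsak} is in hand, each item is a one-line verification that a suitable homomorphism between disjoint unions of cycles exists.
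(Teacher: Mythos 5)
Your proposal is correct and matches the paper's intended argument: the paper states this lemma without proof, explicitly noting that it follows from Theorem~\ref{thm:loopolsak}, and your homomorphisms $\C\to\C\dotdiv c$, the component inclusion $\C\to\Cyc{C\cupdot\{d\}}$, and the retraction collapsing the $d$-cycle onto a dividing cycle (together with Lemma~\ref{lem:dotdiv}(2) for the ``in particular'' part) are exactly the verifications the authors leave to the reader.
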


Using these results we can characterize the orders of $\mPCL$ and $\PCPoset$.
\begin{lemma}\label{cor:implicationSinglePClc}
Let $\Sigma_P, \Sigma_Q$ be prime cyclic loop conditions. Then the following are equivalent:
\begin{multicols}{3}
\begin{enumerate}[label=(\arabic*)]
    \item $\Sigma_P\Rightarrow\Sigma_Q$,
    \item $\mathbb Q\leq \mathbb P$,
    \item $\Pol(\mathbb Q)\not\models\Sigma_P$,
    \item $P\subseteq Q$,
    \item $\mathbb P\to\mathbb Q$.
    \item[]
\end{enumerate}
\end{multicols}
\end{lemma}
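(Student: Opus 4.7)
The plan is to close two cycles of implications: first $(4) \Leftrightarrow (5) \Rightarrow (1) \Rightarrow (3) \Rightarrow (4)$, and then fold in $(2)$ via $(4) \Rightarrow (2) \Rightarrow (3)$. All of the ingredients are already in place: Theorem~\ref{thm:loopolsak}, Lemma~\ref{lem:duofpcSatisfyPclc}, Corollary~\ref{cor:wond}, and the observation after Definition~\ref{def:SigmaG} that a finite loop-free digraph never satisfies its own loop condition.

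For $(4) \Leftrightarrow (5)$: if $P \subseteq Q$ the inclusion of cycle indices yields an embedding $\mathbb P \hookrightarrow \mathbb Q$, hence a homomorphism; conversely, a homomorphism $\mathbb P \to \mathbb Q$ restricts on each $\Cyc p \hookrightarrow \mathbb P$ to a homomorphism into a cycle of length dividing $p$, and since $p$ is prime and $\mathbb Q$ contains no loop this length must equal $p$, forcing $p \in Q$. The implication $(5) \Rightarrow (1)$ is Theorem~\ref{thm:loopolsak} applied through $\mathbb G_{\Sigma_P} \cong \mathbb P$ and $\mathbb G_{\Sigma_Q} \cong \mathbb Q$. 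For $(1) \Rightarrow (3)$, the digraph $\mathbb Q$ is smooth and loop-free, so $\Pol(\mathbb Q) \not\models \Sigma_Q$; combining with $\Sigma_P \Rightarrow \Sigma_Q$ by contraposition gives $\Pol(\mathbb Q) \not\models \Sigma_P$. For $(3) \Rightarrow (4)$, I would argue contrapositively: if there exists $p \in P \setminus Q$, then Lemma~\ref{lem:duofpcSatisfyPclc} yields $\Pol(\mathbb Q) \models \Sigma_p$, and Theorem~\ref{thm:loopolsak} applied to the embedding $\Cyc p \hookrightarrow \mathbb P$ gives $\Sigma_p \Rightarrow \Sigma_P$, whence $\Pol(\mathbb Q) \models \Sigma_P$, contradicting $(3)$.

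For $(2) \Rightarrow (3)$, note that $\mathbb P$ is a finite loop-free smooth digraph, so $\Pol(\mathbb P) \not\models \Sigma_P$; Corollary~\ref{cor:wond} applied to $\mathbb Q \leq \mathbb P$ then forces $\Pol(\mathbb Q) \not\models \Sigma_P$. For $(4) \Rightarrow (2)$, I plan to exhibit a concrete first pp-power of $\mathbb Q$ homomorphically equivalent to $\mathbb P$. Set $k \coloneqq \prod_{p \in P} p$ and let $\Theta(x)$ be the pp-formula asserting the existence of a closed walk of length $k$ starting at $x$. Because every vertex of $\mathbb Q$ lies in a unique cycle $\Cyc q$ with $q$ prime, and $\gcd(q,k)$ equals $q$ or $1$ according as $q \in P$ or $q \in Q \setminus P$, the predicate $\Theta$ is satisfied exactly by the vertices of the $P$-cycles. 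Therefore the first pp-power of $\mathbb Q$ with edge relation $\Phi_E(x,y) \coloneqq E(x,y) \wedge \Theta(x)$ is isomorphic to $\mathbb P$ together with some isolated vertices (the vertices of $Q\setminus P$-cycles), and is therefore homomorphically equivalent to $\mathbb P$.

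The bulk of this is bookkeeping; the only step requiring a little verification beyond direct appeal to the earlier results is $(4) \Rightarrow (2)$, where one must check that $\Theta$ cuts out precisely the $P$-vertices and that adjoining isolated vertices to a nonempty $\mathbb P$ does not change its homomorphic equivalence class. The degenerate case $P = \emptyset$, in which $\mathbb P$ is the empty digraph and $\Sigma_P$ is vacuous, is harmless but should either be excluded by convention or handled separately from the argument above.
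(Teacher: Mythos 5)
Your proposal is correct and follows essentially the same route as the paper: the same implication cycle $(1)\Rightarrow(3)\Rightarrow(4)\Rightarrow(5)\Rightarrow(1)$ together with $(4)\Rightarrow(2)\Rightarrow(3)$, using the same ingredients (Theorem~\ref{thm:loopolsak}, Lemma~\ref{lem:duofpcSatisfyPclc}, Corollary~\ref{cor:wond}, and the pp-power with edge relation $x\stackrel{c}{\to}x\wedge x\to y$ for $c=\prod_{p\in P}p$). The direct argument for $(5)\Rightarrow(4)$ is redundant given the cycle but harmless, and your caveat about $P=\emptyset$ is moot since the paper's prime cyclic loop conditions are taken over nonempty sets of primes.
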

\begin{proof}
We show (1) $\Rightarrow$ (3) $\Rightarrow$ (4)  $\Rightarrow$ (5)  $\Rightarrow$ (1) and (4)  $\Rightarrow$ (2) $\Rightarrow$ (3).

(1) $\Rightarrow$ (3)
Since $\Pol(\mathbb Q)\not\models\Sigma_Q$ we have, by assumption, that $\Pol(\mathbb Q)\not\models\Sigma_P$.

(3) $\Rightarrow$ (4)
We show the contraposition. 
Since $P\nsubseteq Q$ there is a $p\in P\setminus Q$. By Lemma~\ref{lem:duofpcSatisfyPclc} we have $\Pol(\mathbb Q)\models\Sigma_p$. By Lemma~\ref{lem:orderOnLoopcond} we have $\Sigma_p\Rightarrow\Sigma_P$. Hence $\Pol(\mathbb Q)\models\Sigma_P$.

(4) $\Rightarrow$ (5)
The identity-function is even an embedding from $\mathbb P$ to $\mathbb Q$.

(5) $\Rightarrow$ (1)
Follows from Theorem~\ref{thm:loopolsak}.

(4) $\Rightarrow$ (2)
Let $\C$ be the first pp-power of $\mathbb Q$ given by the formula $\Phi_E(x,y)\coloneqq x\stackrel{c}{\to}x\wedge x\to y$ where $c\coloneqq \prod_{p\in P}p$. 
The set of edges of $\C$ is the subset of edges of $\mathbb Q$ containing all edges that lie in a cycle whose length divides $c$.
Note that $q\in Q$ divides $c$ if and only if $q\in P$. Hence $\Cyc q\hookrightarrow\C$ if and only if $q\in P$.
Therefore, $\C$ consists of  $\mathbb P$ and possibly some isolated points and is homomorphically equivalent to $\mathbb P$.

(2) $\Rightarrow$ (3)
By Corollary \ref{cor:wond}, every height 1 identity satisfied by $\mathbb Q$ is also satisfied by $\mathbb P$. Since $\Pol(\mathbb P)\not\models\Sigma_P$ we have that $\Pol(\mathbb Q)\not\models\Sigma_P$.
\end{proof}

As a consequence of the previous corollary 
any element $[\Sigma_P]$ of $\mPCL$ can be represented by exactly one prime cyclic loop condition, i.e., $\Sigma_P$. Hence we will from now on identify $[\Sigma_P]\in\mPCL$ with $\Sigma_P$. Analogously, we identify $[\mathbb P]\in\PCPoset$ with $\mathbb P$.
Furthermore, we obtain a simple description of $\mPCL$ and $\PCPoset$.

\begin{corollary}\label{cor:mPclEqPPC}
$\mPCL\simeq\PCPoset\simeq (\{P\mid P\text{ a finite nonempty set of primes}\},\supseteq)$
\end{corollary}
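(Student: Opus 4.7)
The plan is to check that $P \mapsto \Sigma_P$ and $P \mapsto \mathbb P$ are well-defined bijections between finite nonempty sets of primes and the underlying sets of $\mPCL$, $\PCPoset$, and then verify that the orderings match. Almost all the work is already done by Lemma~\ref{cor:implicationSinglePClc}; the corollary is essentially a repackaging.

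\emph{Bijection.} For surjectivity, every element of $\mPCL$ is by definition of the form $[\Sigma_P]$ for some finite (nonempty) set of primes $P$, and every element of $\PCPoset$ is of the form $[\mathbb P]$ for some such $P$. For injectivity, suppose $[\Sigma_P] = [\Sigma_Q]$ in $\mPCL$, i.e.\ $\Sigma_P \Leftrightarrow \Sigma_Q$. Then $\Sigma_P \Rightarrow \Sigma_Q$ and $\Sigma_Q \Rightarrow \Sigma_P$, so by the equivalence $(1)\Leftrightarrow(4)$ of Lemma~\ref{cor:implicationSinglePClc} we obtain $P \subseteq Q$ and $Q \subseteq P$, whence $P = Q$. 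Analogously, $[\mathbb P] = [\mathbb Q]$ in $\PCPoset$ means $\mathbb P \equiv \mathbb Q$, which by $(2)\Leftrightarrow(4)$ applied twice forces $P = Q$.

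\emph{Order.} Recall that $\mPCL$ is ordered by $\Leftarrow$ and $\PCPoset$ by $\leq$. Using Lemma~\ref{cor:implicationSinglePClc} again,
\[
\Sigma_P \leq_{\mPCL} \Sigma_Q \iff \Sigma_Q \Rightarrow \Sigma_P \iff Q \subseteq P \iff P \supseteq Q,
\]
and
\[
\mathbb P \leq_{\PCPoset} \mathbb Q \iff \mathbb P \leq \mathbb Q \iff Q \subseteq P \iff P \supseteq Q.
\]
Hence both $P \mapsto \Sigma_P$ and $P \mapsto \mathbb P$ are order isomorphisms onto $(\{P \mid P \text{ a finite nonempty set of primes}\}, \supseteq)$, and the two posets $\mPCL$ and $\PCPoset$ are isomorphic via $\Sigma_P \leftrightarrow \mathbb P$.

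There is no real obstacle here: the content of the corollary lies entirely in the preceding lemma, which has already established the simultaneous characterisations of $\Rightarrow$ on prime cyclic loop conditions and $\leq$ on disjoint unions of prime cycles in terms of subset containment of the corresponding sets of primes. The only thing worth noting is that one must flip the inclusion when passing to the posets, because both $\mPCL$ (ordered by $\Leftarrow$) and $\PCPoset$ (ordered by pp-constructability $\leq$) reverse the direction of $\subseteq$ on $P$.
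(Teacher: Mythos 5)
Your proof is correct and matches the paper's treatment: the paper states this corollary as an immediate consequence of Lemma~\ref{cor:implicationSinglePClc} (using the equivalences (1)$\Leftrightarrow$(4) and (2)$\Leftrightarrow$(4) to get both the uniqueness of representatives and the order correspondence), which is exactly what you spell out. The only substance beyond the lemma is the bookkeeping of the order reversal ($\Leftarrow$ and pp-constructability $\leq$ both correspond to $\supseteq$ on the prime sets), and you handle that correctly.
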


\section{Cyclic loop conditions}\label{sec:conditions}
The goal of this section is to give a comprehensible description of the poset of cyclic loop conditions and of the poset of sets of cyclic loop conditions ordered by strength. The results from this section will help us to describe the poset of
pp-constructability types of finite disjoint unions of cycles, $\SDPoset$.
We start by giving a description of the implication order on cyclic loop conditions in Theorem~\ref{thm:implicationSingleClc}. Using a compactness argument we extend this description to  sets of cyclic loop conditions  in Corollary~\ref{thm:implicationClc}. 
We show that every cyclic loop condition is equivalent to a set of prime cyclic loop conditions. Finally, in Corollary~\ref{cor:characterizationMCLAndmCL}, we give a comprehensible description of the poset of sets of prime cyclic loop conditions ordered by their strength for clones, and hence for the poset of cyclic loop conditions ordered by their strength for clones.

\subsection{Single cyclic loop conditions}\label{ssc:clc}

\begin{definition}
We introduce the poset $\mCL$ as follows  
\begin{align*}
    \mCL&\coloneqq (\{[\Sigma] \mid \Sigma \text{ a cyclic loop condition}\},\Leftarrow).
\end{align*}
\end{definition}

One way to show that one cyclic loop condition is stronger than another is presented in the following example.

\begin{example}
Let $\mathcal{C}$ be a clone with domain $D$ such that $\mathcal C\models \Sigma_4$. Then there is some $f\in\mathcal C$ satisfying 
\[f(a,b,c,d)=f(b,c,d,a)\text{ for all }a,b,c,d\in D.\]
The function $g\colon (a,b)\mapsto f(a,b,a,b)$ satisfies
\[g(a,b)=g(b,a)\text{ for all }a,b\in D.\]
Hence $\mathcal C\models \Sigma_2$. Therefore $\Sigma_4\Rightarrow\Sigma_2$.
\eoe
\end{example}

Alternatively, $\Sigma_4\Rightarrow\Sigma_2$ follows from Theorem \ref{thm:loopolsak} and the fact that $\Cyc4\to\Cyc2$.
However, unlike for prime cyclic loop conditions, not every implication between cyclic loop conditions can be shown using Theorem~\ref{thm:loopolsak} as seen in the following example. 

\begin{example}\label{exa:2implies4}
Let $\mathcal{C}$ be a clone with domain $D$ such that $\mathcal C\models \Sigma_2$. Then there is some $g\in\mathcal C$ satisfying 
\[g(a,b)=g(b,a)\text{ for all }a,b\in D.\]
The function $f\colon (a,b,c,d)\mapsto g(g(a,b),g(c,d))$ satisfies the identity
\begin{align*}
    f(a,b,c,d)&=g(g(a,b),g(c,d))\\&=g(g(a,b),g(d,c))
    \\&= g(g(d,c),g(a,b))=f(d,c,a,b)
\end{align*}

 for all $a,b,c,d\in D$.
 Note that the digraph corresponding to this identity is isomorphic to $\Cyc4$.
 Hence $\mathcal C\models \Sigma_4$. Therefore $\Sigma_2\Rightarrow\Sigma_4$. However $\mathbb{G}_{\Sigma_2}=\Cyc 2 \not\to \Cyc 4 = \mathbb{G}_{\Sigma_4}$.
\eoe
\end{example}

The next goal is to weaken the condition in Theorem~\ref{thm:loopolsak} such that the converse implication also holds. First we generalize Example~\ref{exa:2implies4} by proving that $\{\Sigma_C,\Sigma_D\} \Rightarrow\Sigma_{C\cdot D}$ in Lemma~\ref{cor:bulletLoopConditions}. This lemma generalizes Proposition 2.2 in~\cite{Barto_modularity} 
from cyclic loop conditions with only one cycle to cyclic loop conditions in general. To prove this lemma we introduce some notation.  
The function $f$ constructed in Example~\ref{exa:2implies4} is the so-called \emph{star product} of $g$ with itself \cite{absorption}.

\begin{definition}
Let $A$ be a set, $f\colon A^n\to A$ and $g\colon A^m\to A$ be maps. The \emph{star product} of $f$ and $g$ is the function $(f\star g)\colon A^{n\cdot m}\to A$ defined by
\[(x_1,\dots,x_{n\cdot m})\mapsto f(g(x_1,\dots,x_m),\dots,g(x_{(n-1)\cdot m+1},\dots,x_{n\cdot m})).\]
For functions $f\colon A^I\to A$ and $g\colon A^J\to A$, where $I$ and $J$ are finite sets, we define the star product $(f\star g)\colon A^{I\mathbin\times J}\to A$ by
\[\boldsymbol t\mapsto f(i\mapsto g(j\mapsto  t_{(i,j)})).\]
\end{definition}
Note that the second definition extends the first one in the following sense. Let $f\colon A^I\to A$ and $g\colon A^J\to A$ be functions, where $I=[n]$ and $J=[m]$. 
Define $\tilde f\colon A^n\to A$ and $\tilde g\colon A^m\to A$ as $\tilde f(t_1,\dots,t_n)\coloneqq f(\boldsymbol t)$ and $\tilde g(s_1,\dots,s_m)\coloneqq g(\boldsymbol s)$ for all $\boldsymbol t\in A^I,\boldsymbol s\in A^J$. For $\sigma\colon I\mathbin\times J\to[n\cdot m]$ with $\sigma(i,j)=(i-1)\cdot m+j$ we have 
\[(f\star g)_\sigma(\boldsymbol t)=(\tilde f\star \tilde g)(t_1,\dots,t_{n\cdot m}).\]

With the star product we can easily show that $\{\Sigma_C,\Sigma_D\}\Rightarrow \Sigma_{\C\times \D}$. 
\begin{example}\label{exa:CDimpliesCtimesD}
Let $\mathcal{C}$ be a clone such that $\mathcal{C}\models \Sigma_C$ and $\mathcal{C}\models \Sigma_D$. Then there are $f,g\in\mathcal C$ with $f\models \Sigma_C$ and $g\models\Sigma_D$.
Then 
\[(f\mathbin\star g)(\boldsymbol t)= f((a,k)\mapsto g((b,\ell)\mapsto t_{((a,k),(b,\ell))})).\]

Observe that $f\mathbin\star g$ satisfies
\[f\mathbin\star g=f_{\sigma_{\!C}}\mathbin\star g=f_{\sigma_{\!C}}\mathbin\star g_{\sigma_{\!D}}=(f\mathbin\star g)_{\sigma_{\C\mathbin\times\D}}.\]

Hence $f\mathbin\star g\models \Sigma_{\C\mathbin\times\D}$.  Therefore $\{\Sigma_C,\Sigma_D\}\Rightarrow \Sigma_{\C\times \D}$. 
\eoe
\end{example}

The following example points out the difference between Example~\ref{exa:2implies4} and the special case of $C=D=\{2\}$ in Example~\ref{exa:CDimpliesCtimesD}.
\begin{example}\label{exa:TimesvsBullet}
Let $g\models\Sigma_2$. Then the digraph corresponding to the identity 
\[g\mathbin\star g=g(g,g)=g_{\sigma_2}(g_{\sigma_2},g_{\sigma_2})=g_{\sigma_2} \mathbin\star g_{\sigma_2}\] 
is $\Cyc2\mathbin\times\Cyc2$, which consists of two cycles of length 2. Hence $g\mathbin\star g$ satisfies $\Sigma_{\Cyc2\mathbin\times\Cyc2}$, which is equivalent to $\Sigma_2$.
Observe that in order to show $\Sigma_2\Rightarrow\Sigma_4$, in Example~\ref{exa:2implies4}, we used the identity 
\[g(g,g)=g_{\sigma_2}(g,g_{\sigma_2}).\] 
This identity corresponds to a digraph $\mathbb G$ isomorphic to $\Cyc4$. Hence $g\mathbin\star g$ also satisfies $\Sigma_{\mathbb G}$, which is equivalent to $\Sigma_4$. Note that $\mathbb G$ and $\Cyc2\mathbin\times\Cyc2$ have the same vertices.
\eoe
\end{example}

We now introduce a new edge relation on $\C\mathbin\times\D$, which in the case of $\C=\D=\Cyc2$ yields the graph $\mathbb G$ from Example~\ref{exa:TimesvsBullet}.

\begin{definition}
Let $C,D\subset\N^+\!$ be finite.
Define $\C\mathbin\bullet\D$ as the finite disjoint union of cycles $(V,E)$ with  $V=\C\times\D$ and  $E=\{(\boldsymbol t,\sigma_{\C\mathbin\bullet\D}(\boldsymbol t))\mid \boldsymbol t\in \C\times\D\}$, where
\[\sigma_{\C\mathbin\bullet\D}((a,k),(b,\ell))\coloneqq    
\begin{cases}
    ((a,0),(b,\ell+1))&\text{if }k = a-1\\
    ((a,k+1),(b,\ell)) & \text{otherwise.}
\end{cases}\]
Define 
$\C^{\bullet 0}\coloneqq\Cyc1$ and $\C^{\bullet (k+1)}\coloneqq\C \mathbin\bullet \C^{\bullet k}$ for $k\in\N$.
\end{definition}
For example, $\Cyc{2}\mathbin\bullet\Cyc{2}$ is isomorphic to $\Cyc{4}$. 

Observe that the set associated to  $\C\mathbin\bullet\D$ is 
\[C\cdot D\coloneqq \{a\cdot b\mid a\in C, b\in D\},\]
whereas, the set associated to the product $\C\times\D$ is $\{\lcm(a,b)\mid a\in C,b\in D\}$.

\begin{lemma}\label{lem:CbulletDequalsCD}
Let $C,D\subset \N^+\!$ finite. Then $\{\Sigma_C,\Sigma_D\}\Rightarrow \Sigma_{C\cdot D}$.
\end{lemma}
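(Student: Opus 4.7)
The plan is to elevate the star-product argument from Example~\ref{exa:CDimpliesCtimesD} by verifying that $f \star g$ satisfies the finer identity $\Sigma_{\C \mathbin\bullet \D}$ (rather than merely $\Sigma_{\C \times \D}$), which is exactly the strengthening suggested by Example~\ref{exa:TimesvsBullet}. The intuition is that the ``with-carry'' action of $\sigma_{\C \mathbin\bullet \D}$ -- incrementing the $\C$-coordinate and only passing a shift to the $\D$-coordinate when the $\C$-cycle wraps -- matches the nested structure of the star product: a $\sigma_D$-shift of an inner $g$-argument tuple is absorbed by $g = g_{\sigma_D}$, and the resulting outer tuple then differs from the original by $\sigma_C$, absorbed by $f = f_{\sigma_C}$.

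Concretely, let $\mathcal C$ be any clone on a set $A$ satisfying $\Sigma_C$ and $\Sigma_D$, witnessed by $f, g \in \mathcal C$, and set $h \coloneqq f \star g \in \mathcal C$. For $\boldsymbol t \in A^{\C \times \D}$ I would write $\boldsymbol u^{(a,k)} \in A^{\D}$ for the ``row'' $(b,\ell) \mapsto t_{((a,k),(b,\ell))}$ and $v_{(a,k)} \coloneqq g(\boldsymbol u^{(a,k)})$, so $h(\boldsymbol t) = f(\boldsymbol v)$. The central computation is a case split according to the definition of $\sigma_{\C \mathbin\bullet \D}$: in the non-carry case $k < a-1$ the inner row at outer index $(a,k)$ becomes $\boldsymbol u^{(a,k+1)}$, giving $v_{(a,k+1)}$ after $g$; in the carry case $k = a-1$ it becomes $(\boldsymbol u^{(a,0)})_{\sigma_D}$, which the identity $g = g_{\sigma_D}$ collapses back to $v_{(a,0)}$. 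In both cases the outer value at $(a,k)$ equals $v_{\sigma_C(a,k)}$, so
\[
h(\boldsymbol t_{\sigma_{\C \mathbin\bullet \D}}) = f(\boldsymbol v_{\sigma_C}) = f_{\sigma_C}(\boldsymbol v) = f(\boldsymbol v) = h(\boldsymbol t),
\]
i.e.\ $h \models \Sigma_{\C \mathbin\bullet \D}$.

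To conclude, since the set associated to $\C \mathbin\bullet \D$ is $C \cdot D$ by definition, $\C \mathbin\bullet \D$ is homomorphically equivalent to the disjoint union of cycles associated to $C \cdot D$; Theorem~\ref{thm:loopolsak} applied in both directions then yields $\Sigma_{\C \mathbin\bullet \D} \Leftrightarrow \Sigma_{C \cdot D}$, so $\mathcal C \models \Sigma_{C \cdot D}$. The only real subtlety is the bookkeeping in the carry case: one must recognise that the entire inner row is shifted by $\sigma_D$ (not entry-by-entry in $(b,\ell)$), so that a single application of $g = g_{\sigma_D}$ absorbs the shift in one stroke. Once the indexing is set up correctly, the remainder is two mechanical invocations of the hypotheses $f = f_{\sigma_C}$ and $g = g_{\sigma_D}$.
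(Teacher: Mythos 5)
Your proof is correct and follows essentially the same route as the paper: both show that the star product $f\star g$ satisfies $\Sigma_{\C\mathbin\bullet\D}$ and then pass to $\Sigma_{C\cdot D}$ via Theorem~\ref{thm:loopolsak}. The only difference is presentational — the paper factors $\sigma_{\C\mathbin\bullet\D}$ as a composition $\tau\circ\tau_1\circ\cdots\circ\tau_n$ of permutations each of which fixes $f\star g$ as a minor, whereas you carry out the equivalent carry/non-carry case analysis directly on the indices.
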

\begin{proof}
Let $\mathcal C$ be a clone such that $\mathcal C\models\Sigma_{C}$ and $\mathcal C\models\Sigma_{D}$. Then there are $f,g\in\mathcal C$ with $f\models \Sigma_{C}$ and $g\models \Sigma_{D}$.
Let $\{a_1,\dots,a_n\}=C$.
For any $i\in[n]$ define the following permutations on $\C\mathbin\bullet\D$
\begin{align*}
\tau\colon ((a,k),(b,\ell))&\mapsto 
((a,k+1),(b,\ell))
\\
\tau_{i}\colon ((a,k),(b,\ell))&\mapsto
\begin{cases}
((a,k),(b,\ell+1)) & \text{if }a= a_i, k=a-1 \\
((a,k),(b,\ell)) & \text{otherwise.}
\end{cases}
\end{align*}

\begin{figure}
    \centering
    \begin{tikzpicture}
        \node at (0+90:0.5) [scale=0.4] {$((3,0),(2,0))$};
        \node at (0,-1.1) [scale=0.4] {$((2,0),(2,0))$};
        \cycleEmpty{3}{(0,0)}{{1,2,3}}
        \cycleEmpty{2}{(0,-1.5)}{{1,2}}
        
        \node at ($(1.6,0)+(0+90:0.5)$) [scale=0.4] {$((3,0),(2,1))$};
        \node at (1.6,-1.1) [scale=0.4] {$((2,0),(2,1))$};
        \cycleEmpty{3}{(1.6,0)}{{1,2,3}}
        \cycleEmpty{2}{(1.6,-1.5)}{{1,2}}
    
    \end{tikzpicture}
    \hspace{1.9cm}
    \begin{tikzpicture}
        \cycleEmpty{3}{(0,0)}{{}}
        \cycleEmpty{2}{(0,-1.5)}{{}}
        
        \cycleEmpty{3}{(1.6,0)}{{}}
        \cycleEmpty{2}{(1.6,-1.5)}{{}}
    \def \radiusA {{3*0.035+0.15}}
    \def \radiusB {{2*0.035+0.15}}
        
        \path[->,>=stealth']
            (-30:\radiusA) edge[bend right,shorten >=0.5mm,shorten <=0.5mm] ($(1.6,0)+(-30:\radiusA)$)
            ($(1.6,0)+(-30:\radiusA)$) edge[bend right=40,shorten >=0.5mm,shorten <=0.5mm] (-30:\radiusA)
            ($(0,-1.5)+(90+180:\radiusB)$) edge[bend right,shorten >=0.5mm,shorten <=0.5mm] ($(1.6,-1.5)+(90+180:\radiusB)$)
            ($(1.6,-1.5)+(90+180:\radiusB)$) edge[bend right,shorten >=0.5mm,shorten <=0.5mm] ($(0,-1.5)+(90+180:\radiusB)$)
            ;
            \draw[>=stealth',arrows=-{>[bend]}] ($(1.6+0.05,0)+(90+120:\radiusA)$)
    arc ({-60}:{240}:1mm);
            \draw[>=stealth',arrows=-{>[bend]}] ($(1.6+0.05,0)+(90:\radiusA)$)
    arc ({-60}:{240}:1mm);
            \draw[>=stealth',arrows=-{>[bend]}] ($(0.05,0)+(90+120:\radiusA)$)
    arc ({-60}:{240}:1mm);
            \draw[>=stealth',arrows=-{>[bend]}] ($(0.05,0)+(90:\radiusA)$)
    arc ({-60}:{240}:1mm);
            \draw[>=stealth',arrows=-{>[bend]}] ($(0.05,-1.5)+(90:\radiusB)$)
    arc ({-60}:{240}:1mm);
            \draw[>=stealth',arrows=-{>[bend]}] ($(1.6+0.05,-1.5)+(90:\radiusB)$)
    arc ({-60}:{240}:1mm);
    \end{tikzpicture}
    \hspace{1.9cm}
    \begin{tikzpicture}

        \cycleEmpty{3}{(0,0)}{{1,2}}
        \cycleEmpty{2}{(0,-1.5)}{{1}}
        
        \cycleEmpty{3}{(1.6,0)}{{1,2}}
        \cycleEmpty{2}{(1.6,-1.5)}{{1}}
        
        \def \radiusA {{3*0.035+0.15}}
        \def \radiusB {{2*0.035+0.15}}

        \path[->,>=stealth']

            (90+240:\radiusA) edge[out=45,in=170,shorten >=0.5mm,shorten <=0.5mm] ($(1.6,0)+(90:\radiusA)$)

            ($(0,-1.5)+(90+180:\radiusB)$) edge[out=45,in=170,shorten >=0.5mm,shorten <=0.5mm] ($(1.6,-1.5)+(90:\radiusB)$)
        ;
        \path 
        (90:\radiusA) edge[<-,>=stealth',out=45,in=170,shorten <=0.5mm] ($(1.6,0)+(90:\radiusA)+(90:\radiusA)-(90+240:\radiusA)$)
        
        ($(1.6,0)+(90:\radiusA)+(90:\radiusA)-(90+240:\radiusA)$) edge[out=-10,in=50,shorten >=0.5mm] ($(1.6,0)+(90+240:\radiusA)$)

        ($(0,-1.5)+(90:\radiusB)$) edge[<-,>=stealth',out=45,in=170,shorten <=0.5mm] ($(1.6,-1.5)+(90:\radiusB)+(90:\radiusB)-(90+180:\radiusB)$)
        
        ($(1.6,-1.5)+(90:\radiusB)+(90:\radiusB)-(90+180:\radiusB)$) edge[out=-10,in=20,shorten >=0.5mm] ($(1.6,-1.5)+(90+180:\radiusB)$)
        
        ;
    
    \end{tikzpicture}
    \caption{Graphs of the permutations $\tau$ (left), $\tau_{1}\circ\tau_{2}$ (middle), and $\tau\circ\tau_{1}\circ\tau_{2}$ (right) on $\Cyc{2,3}\mathbin\bullet\Cyc{2}$.
}
    \label{fig:CBulletD}
\end{figure}
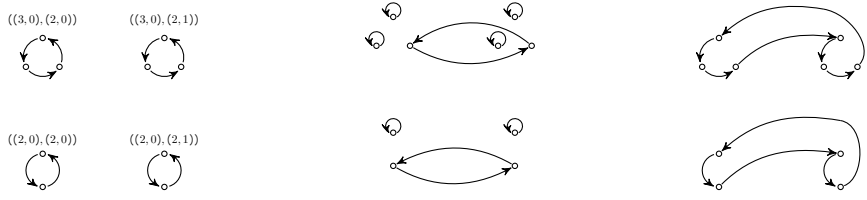
Observe that, since $f=f_{\sigma_{\!C}}$ and $g=g_{\sigma_{\!D}}$, we have $f\star g=(f\star g)_\tau=(f\star g)_{\tau_i}$ for all $i\in [n]$. 
We show that
\[\tau\circ\tau_{1}\circ\cdots\circ\tau_{n}
=\sigma_{\C\mathbin\bullet \D}.\]
First, the reader can verify that this equality holds for the example in Figure~\ref{fig:CBulletD}.
To prove that the equality holds in general let $(a,k)\in\C$ and $(b,\ell)\in\D$.
We have that
\begin{align*}
    (\tau\circ\tau_{1}\circ\cdots\circ\tau_{n})((a,k),(b,\ell))
    &=
    \begin{cases}
    \tau((a,k),(b,\ell+1))&\text{if }k=a-1\\
    \tau((a,k),(b,\ell)) & \text{otherwise}
    \end{cases}\\
    &=\begin{cases}
    ((a,0),(b,\ell+1))&\text{if }k=a-1\\
    ((a,k+1),(b,\ell)) & \text{otherwise}
    \end{cases}\\
    &=\sigma_{\C\mathbin{\bullet}\D}((a,k),(b,\ell)).
\end{align*}
Hence, $(f\star g)=(f\star g)_{\sigma_{\C\mathbin\bullet\D}}$ and since $C\cdot D$ is associated to $\C\mathbin\bullet\D$ there is, by Theorem~\ref{thm:loopolsak}, also an element in $\mathcal C$ that satisfies $\Sigma_{C\cdot D}$.
\end{proof}

Observe that $\C\mathbin\bullet\D\to\C$ and $\C\mathbin\bullet\D\to\D$. Hence, by Theorem~\ref{thm:loopolsak}, we have $\Sigma_{C\cdot D}\Rightarrow\Sigma_C$ and $\Sigma_{C\cdot D}\Rightarrow\Sigma_D$. Therefore, the implication in Lemma~\ref{lem:CbulletDequalsCD} is actually an equivalence. As a consequence we obtain the following corollary. 

\begin{corollary}\label{cor:bulletLoopConditions}
Let $C,C_1,\dots,C_n\subset \N^+\!$ be finite. Then 
$\{\Sigma_{C_1},\dots,\Sigma_{C_n}\}\Leftrightarrow\Sigma_{C_1\cdot\ldots\cdot C_n}$. In particular, $\Sigma_C\Leftrightarrow\Sigma_{C^k}$ for every $k\in\N^+\!$.
\end{corollary}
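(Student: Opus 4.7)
The plan is to bootstrap from the $n=2$ case already established in the discussion right before the corollary, and then proceed by induction on $n$.

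For the base case $n=2$, the equivalence $\{\Sigma_C,\Sigma_D\}\Leftrightarrow\Sigma_{C\cdot D}$ has essentially been handed to us: Lemma~\ref{lem:CbulletDequalsCD} gives the forward direction, and for the reverse direction I would invoke Theorem~\ref{thm:loopolsak} together with the two homomorphisms $\C\mathbin\bullet\D\to\C$ and $\C\mathbin\bullet\D\to\D$. These homomorphisms exist because $\C\mathbin\bullet\D$ is (associated to) the set $C\cdot D$, so each of its cycles has length $a\cdot b$ with $a\in C$ and $b\in D$, and a cycle of length $ab$ always admits homomorphisms onto cycles of length $a$ and of length $b$ (wrap around). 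Applying Theorem~\ref{thm:loopolsak} yields $\Sigma_{C\cdot D}\Rightarrow\Sigma_C$ and $\Sigma_{C\cdot D}\Rightarrow\Sigma_D$.

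For the inductive step, assume the equivalence holds for $n-1$. Then
\[
\{\Sigma_{C_1},\dots,\Sigma_{C_n}\}
\;\Leftrightarrow\;
\{\Sigma_{C_1\cdot\ldots\cdot C_{n-1}},\Sigma_{C_n}\}
\;\Leftrightarrow\;
\Sigma_{(C_1\cdot\ldots\cdot C_{n-1})\cdot C_n}
\;=\;\Sigma_{C_1\cdot\ldots\cdot C_n},
\]
where the first equivalence uses the induction hypothesis and the second uses the base case. The "in particular" statement follows by setting $C_1=\cdots=C_k=C$, observing that the multiset of hypotheses collapses to $\{\Sigma_C\}$ while the product of sets is $C^k$.

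There is no real obstacle here: the genuine content lives in Lemma~\ref{lem:CbulletDequalsCD} (the star-product construction producing a polymorphism satisfying $\Sigma_{\C\mathbin\bullet\D}$) and in Theorem~\ref{thm:loopolsak}, both of which are already available. The only mild subtlety is making sure the $n=2$ equivalence is combined correctly when iterating — specifically, that $C_1\cdot\ldots\cdot C_n$ is the set $\{a_1\cdots a_n\mid a_i\in C_i\}$ and that this product is associative, so the grouping used in the induction step matches the flat product on the right-hand side.
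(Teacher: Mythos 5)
Your proposal is correct and follows essentially the same route as the paper: the paper also obtains the corollary from Lemma~\ref{lem:CbulletDequalsCD} together with the observation that $\C\mathbin\bullet\D\to\C$ and $\C\mathbin\bullet\D\to\D$ give the reverse implications via Theorem~\ref{thm:loopolsak}, and then iterates (the paper leaves the induction implicit, which you have merely spelled out).
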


We now weaken the condition in Theorem~\ref{thm:loopolsak} such that it characterizes the implication order on $\mCL$. 
\begin{theorem}\label{thm:implicationSingleClc}

Let $C,D\subset\N^+\!$ finite. Then the following are equivalent:
\begin{enumerate}[label=(\arabic*)]
    \item We have $\Sigma_C\Rightarrow\Sigma_D$.
    \item For all $c\in \N^+\!$ we have $\Pol(\D\dotdiv c)\models\Sigma_C$ implies $\Pol(\D\dotdiv c)\models\Sigma_D$.
    \item For every $a\in C$ there exist $b\in D$ and $k\in\N$ such that $b$ divides $a^k$.
    \item There exists $k\in\N^+\!$ such that $\C^{\bullet k}\to \D$.
\end{enumerate}
\end{theorem}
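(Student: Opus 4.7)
The plan is to establish the cycle $(1) \Rightarrow (2) \Rightarrow (3) \Rightarrow (4) \Rightarrow (1)$. The directions $(1) \Rightarrow (2)$, $(3) \Rightarrow (4)$, and $(4) \Rightarrow (1)$ can be handled with the tools already at our disposal, while $(2) \Rightarrow (3)$ requires an explicit criterion for when $\Pol(\D \dotdiv c) \models \Sigma_C$, which the paper postpones to Lemma~\ref{lem:duofcSatisfyClc}.

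The direction $(1) \Rightarrow (2)$ is immediate, since (2) is merely the restriction of (1) to clones of the form $\Pol(\D\dotdiv c)$. For $(4) \Rightarrow (1)$ the plan is to combine Theorem~\ref{thm:loopolsak} with Corollary~\ref{cor:bulletLoopConditions}: the set associated to $\C^{\bullet k}$ is $C^k$, so $\C^{\bullet k} \to \D$ gives $\Sigma_{C^k} \Rightarrow \Sigma_D$, and the corollary gives $\Sigma_C \Leftrightarrow \Sigma_{C^k}$, whence $\Sigma_C \Rightarrow \Sigma_D$.

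For $(3) \Rightarrow (4)$ I would argue by pigeonhole. For each $a\in C$, fix $b_a \in D$ and $k_a \in \N$ with $b_a \mid a^{k_a}$, and set $N \coloneqq \max_{a \in C} k_a$ and $k \coloneqq N \cdot |C|$. Given any product $a_1 \cdots a_k \in C^k$, some $a^* \in C$ occurs at least $N$ times among the factors, so $(a^*)^N$, and therefore $b_{a^*} \mid (a^*)^{k_{a^*}} \mid (a^*)^N$, divides $a_1 \cdots a_k$. Thus every cycle of $\C^{\bullet k}$ maps homomorphically to $\Cyc{b_{a^*}}$, a component of $\D$, giving $\C^{\bullet k} \to \D$.

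The main obstacle is $(2) \Rightarrow (3)$. The plan is to prove the contrapositive: if some $a \in C$ has the property that no $b \in D$ divides any power of $a$ --- equivalently, every $b \in D$ has a prime factor not dividing $a$ --- then I would choose $c \coloneqq a^N$ with $N$ larger than every prime-power appearing in any $b \in D$. Each cycle length of $\D \dotdiv c$ is then $b \dotdiv a^N$, coprime to $a$ and, by the failure of (3), still strictly greater than $1$. The hope is to invoke Lemma~\ref{lem:duofcSatisfyClc} to conclude that $\Pol(\D \dotdiv c) \models \Sigma_C$ (because $a \in C$ is coprime to all cycle lengths appearing in $\D \dotdiv c$) but $\Pol(\D \dotdiv c) \not\models \Sigma_D$ (because the surviving nontrivial cycles in $\D\dotdiv c$ obstruct its own loop condition). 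The precise coprimeness-versus-divisibility bookkeeping needed here hinges on the explicit satisfaction criterion of that later lemma, which is why I would defer this implication to Section~\ref{sec:structures}, as the authors themselves announce.
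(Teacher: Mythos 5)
Your proposal is correct and follows essentially the same route as the paper: the same implication cycle $(1)\Rightarrow(2)\Rightarrow(3)\Rightarrow(4)\Rightarrow(1)$, the same pigeonhole argument for $(3)\Rightarrow(4)$, the same combination of Corollary~\ref{cor:bulletLoopConditions} and Theorem~\ref{thm:loopolsak} for $(4)\Rightarrow(1)$, and the same deferral of $(2)\Rightarrow(3)$ to the satisfaction criterion of Lemma~\ref{lem:duofcSatisfyClc}. Your sketched contrapositive for $(2)\Rightarrow(3)$ (taking $c=a^{N}$ with $N$ exceeding all prime powers in $D$) is exactly the argument the paper carries out in Lemma~\ref{lem:missingImplicationSingleClc}.
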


In order to show (2) $\Rightarrow$ (3) we need to know more about the connection between disjoint unions of cycles and cyclic loop conditions. This connection is investigated later in Section~\ref{sec:structures}. As this section is devoted exclusively to cyclic loop conditions we will provide the proof of this implication later.

\begin{proof}
The direction (1) $\Rightarrow$ (2) is clear from the definition of the implication order. 

(2) $\Rightarrow$ (3) Follows from Lemma~\ref{lem:missingImplicationSingleClc}.

(3) $\Rightarrow$ (4) For $a\in C$ let $k_a\in\N$ be such that there is a $b\in D$ with $b$ divides  $a^{k_a}$. Let $n=|C|$ and $k=\max\{k_a\mid a\in C\}$. Note that any number in $C^{n\cdot k}$ must be a multiple of $a^k$ for some $a\in C$. Hence $\C^{\bullet (n\cdot k)}\to \D$.

(4) $\Rightarrow$ (1) 
Let $k\in\N^+\!$ be such that $\C^{\bullet k}\to \D$. Then
\[\Sigma_C\stackrel{\ast}\Leftrightarrow\Sigma_{C^k}\stackrel{\ast\ast}\Rightarrow\Sigma_{D},\]
where $\ast$ and $\ast\ast$ hold by Corollary~\ref{cor:bulletLoopConditions} and Theorem~\ref{thm:loopolsak}, respectively.
\end{proof}

\begin{remark} Note that it follows from (3) that the problem
\begin{align*}
    &\text{Input: two finite sets $C,D\subset\N^+$}\\
    &\text{Output: Does $\Sigma_C\Rightarrow\Sigma_D$ hold?}
\end{align*}
is decidable; in fact, this can be decided in polynomial time even if the integers in C and D are given in binary.
\end{remark}
Define the  function $\Flat$ from $\N^+$ to $\N^+$ as
\begin{align*}
    p_1^{\alpha_1}\cdot\ldots\cdot p_n^{\alpha_n}&\mapsto p_1\cdot\ldots\cdot p_n.
\end{align*}
From (3) in Theorem~\ref{thm:implicationSingleClc} we obtain that 
every cyclic loop condition is equivalent to a cyclic loop condition where only square-free numbers occur. 
\begin{corollary}\label{cor:flat}
For every finite $C\subset\N^+\!$ we have that $\Sigma_C\Leftrightarrow\Sigma_{\Flat(C)}$. 
In particular, we have $\Sigma_{\C\times\D}\Leftrightarrow\Sigma_{\C\mathbin\bullet\D}$.
\end{corollary}

\subsection{Sets of cyclic loop conditions}\label{ssc:setsOfClc}
The next goal is to understand the implication order on sets of cyclic loop conditions. 
\begin{definition}
We introduce the poset $\MCL$ as follows  

\begin{align*}
    \MCL&\coloneqq (\{[\Sigma]\mid \Sigma \text{ a set of cyclic loop conditions}\},\Leftarrow).
\end{align*}
\end{definition}

We already understand this order on finite sets of cyclic loop conditions: by Corollary \ref{cor:bulletLoopConditions}, every finite set of cyclic loop conditions is equivalent to a single cyclic loop condition and we know how to compare  single cyclic loop conditions by Theorem \ref{thm:implicationSingleClc}.
Using the compactness theorem for first-order logic we will show that in order to determine the order on infinite sets it suffices to consider their finite subsets. 
\begin{theorem}\label{thm:compactness}
Let $\Gamma$, $\Sigma$ be sets of height 1 identities, where $\Sigma$ is finite. Then $\Gamma\Rightarrow\Sigma$ if and only if there is a finite $\Gamma'\subseteq \Gamma$ such that $\Gamma'\Rightarrow\Sigma$.
\end{theorem}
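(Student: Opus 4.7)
The ``if'' direction is immediate: if $\Gamma'\subseteq\Gamma$ and $\Gamma'\Rightarrow\Sigma$, then any clone satisfying $\Gamma$ also satisfies $\Gamma'$, hence $\Sigma$. For the forward direction, the plan is to apply the compactness theorem of first-order logic. The key observation is that, since $\Sigma$ is finite, the \emph{failure} of $\Sigma$ in a clone is expressible by a single first-order sentence, whereas satisfaction of the (possibly infinite) $\Gamma$ is encoded one identity at a time via constants. The main obstacle is precisely this asymmetry: the argument would fail for infinite $\Sigma$, since ``$\mathcal{C}\not\models\Sigma$'' would then require infinitely many universal quantifiers over operation sorts.

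Concretely, I would work in a many-sorted first-order language $L$ with a sort $D$ for the domain, a sort $C_n$ for each $n\in\N^+$ collecting the $n$-ary clone operations, application, composition, and projection symbols of the expected arities, and, for every function symbol $f$ (of arity $n_f$) appearing in $\Gamma$, a fresh constant $c_f$ of sort $C_{n_f}$. The class of clones is first-order axiomatizable in $L$ (clone axioms plus extensionality of operations); call this theory $T_{\mathrm{clone}}$. For every identity $f_\sigma\approx g_\tau\in\Gamma$ add to $T_{\mathrm{clone}}$ the axiom
\[\forall x_1,\ldots,x_r\in D\;\;\mathrm{app}(c_f,x_{\sigma(1)},\ldots,x_{\sigma(n)})=\mathrm{app}(c_g,x_{\tau(1)},\ldots,x_{\tau(m)}),\]
call the resulting theory $T_\Gamma$, and for each finite $\Gamma'\subseteq\Gamma$ let $T_{\Gamma'}$ denote the subtheory retaining only the axioms coming from $\Gamma'$. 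Enumerating $\Sigma$ as $\{s_i\approx t_i\mid i\in[k]\}$ and its function symbols as $g_1,\ldots,g_\ell$ of arities $m_1,\ldots,m_\ell$, the failure of $\Sigma$ is captured by the single $L$-sentence
\[\psi\;\coloneqq\;\forall y_1\in C_{m_1}\cdots\forall y_\ell\in C_{m_\ell}\;\bigvee_{i=1}^{k}\exists\vec a\,\bigl(\widetilde{s_i}(\vec a)\neq\widetilde{t_i}(\vec a)\bigr),\]
where $\widetilde{s_i},\widetilde{t_i}$ result from interpreting each $g_j$ by $y_j$.

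Assume, for contradiction, that no finite $\Gamma'\subseteq\Gamma$ implies $\Sigma$. For each finite $\Gamma'$ pick a clone $\mathcal{C}_{\Gamma'}$ with $\mathcal{C}_{\Gamma'}\models\Gamma'$ and $\mathcal{C}_{\Gamma'}\not\models\Sigma$, and expand it to an $L$-structure by interpreting the constants $c_f$ with $f$ appearing in $\Gamma'$ via the operations witnessing $\mathcal{C}_{\Gamma'}\models\Gamma'$, and the remaining constants $c_f$ arbitrarily (say, as projections). This is a model of $T_{\Gamma'}\cup\{\psi\}$. Every finite subtheory of $T_\Gamma\cup\{\psi\}$ is contained in some such $T_{\Gamma'}\cup\{\psi\}$ and is therefore consistent, so by compactness $T_\Gamma\cup\{\psi\}$ itself has a model. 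Its underlying clone satisfies every identity of $\Gamma$ (by the axioms of $T_\Gamma$ and the interpretation of the $c_f$) but not $\Sigma$ (by $\psi$), contradicting $\Gamma\Rightarrow\Sigma$.
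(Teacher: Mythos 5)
Your proof is correct and follows essentially the same route as the paper: both encode ``a clone satisfying $\Gamma$ but not $\Sigma$'' as a model of a first-order theory (clone axioms, constants for the function symbols occurring in $\Gamma$, and a single sentence expressing the failure of the finite $\Sigma$ via universal quantification over operations) and then invoke compactness. The only differences are cosmetic: you use a many-sorted language where the paper uses unary sort predicates $D$, $F_n$ and graph relations $E_n$ over a single sort, and you apply compactness in the ``finitely satisfiable implies satisfiable'' direction rather than extracting an unsatisfiable finite subtheory from an unsatisfiable theory.
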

\begin{proof}
Let $\Sigma=\{(f_1)_{\sigma_1}\approx (g_1)_{\tau_1},\dots,(f_k)_{\sigma_k}\approx (g_k)_{\tau_k}\}$ and let $\kappa$ be the set of function symbols occurring in $\Gamma$. We construct a first-order theory $T_{\Gamma,\Sigma}$ which is satisfiable if and only if $\Gamma\not\Rightarrow\Sigma$. 
The goal of the construction is that every model of $T_{\Gamma,\Sigma}$ encodes a clone which witnesses $\Gamma\not\Rightarrow\Sigma$ and 
conversely every clone which witnesses $\Gamma\not\Rightarrow\Sigma$ is a model of $T_{\Gamma,\Sigma}$.

The signature of $T_{\Gamma,\Sigma}$ is  $\kappa\cup\{D,F_n,E_n\mid n\in\N^+\}$, where
\begin{itemize}
    \item $\kappa$ is a set of constant symbols (intended to denote the operations satisfying $\Gamma$),
    \item $D$ is a unary relation symbol (intended to denote the domain),
    \item $F_n$ is a unary relation symbol (intended to denote the $n$-ary operations) for every $n\in\N^+\!$,
    \item $E_n$ is an $(n+2)$-ary relation symbol (intended to denote the graphs of all $n$-ary operations) for every $n\in\N^+\!$.
\end{itemize}

For the sake of readability we set the following abbreviations:
\begin{itemize}
    \item $\forall x\in R: \Phi(x)$ abbreviates $\forall x( R(x)\Rightarrow \Phi(x))$, for every unary relation symbol $R$
    \item $\exists x\in R: \Phi(x)$ abbreviates $\exists x( R(x)\wedge \Phi(x))$, for every unary relation symbol $R$
    \item $f(\boldsymbol x)\approx x_i$ abbreviates $E_n(f,x_1,\dots,x_n,x_i)$.
\end{itemize}
   
The theory $T_{\Gamma,\Sigma}$ consists of the following sentences
\begin{itemize}
    \item $\exists x (D(x))$,
    \item $\forall f\in F_n\ \forall x_1,\dots,x_n\in D\ \exists! y\in D :E_n(f,x_1,\dots,x_n,y)$ for every
    \item $\exists f\in F_n\ \forall \boldsymbol x\in D^n:f(\boldsymbol x)\approx x_i$ for every $n\in\N^+$ and $i\in[n]$,
    \item $\forall f\in F_n\ \forall g_1,\dots,g_n\in F_m\ \exists h\in F_m$
    \[\forall \boldsymbol x\in D^m :f(g_1(\boldsymbol x),\dots,g_n(\boldsymbol x))\approx h(\boldsymbol x)\] for every $n,m\in\N^+\!$,
    \item $F_n(f)$ for every $n$-ary $f\in \kappa$, $n\in\N^+\!$,
    \item $\forall x_1,\dots, x_r \in D: f(x_{\sigma(1)},\dots,x_{\sigma(n)})\approx g(x_{\tau(1)},\dots,x_{\tau(m)})$ for every identity $f_\sigma\approx g_\tau \in \Gamma$,
    \item $\forall f_1\in F_{n_1}\ \forall g_1\in F_{m_1}\dots \forall f_k\in F_{n_k}\ \forall g_k\in F_{m_k}:$
    \[\bigvee_{i=1}^k \exists x_1,\dots, x_{r_i} \in D:f_i(x_{\sigma_i(1)},\dots,x_{\sigma_i(n_i)})\not\approx g_i(x_{\tau_i(1)},\dots,x_{\tau_i(m_i)})\] 
    assume w.l.o.g. that $f_1,g_1,\dots,f_k,g_k\notin\kappa$.
\end{itemize}
Now we show that $T_{\Gamma,\Sigma}$ is unsatisfiable if and only if $\Gamma\Rightarrow\Sigma$.
If $\Gamma\not\Rightarrow\Sigma$, then there is a clone $\mathcal C$ over some domain $C$ such that $\mathcal C\models\Gamma$, witnessed by a function $\tilde\cdot\colon \tau\to\mathcal C$, and $\mathcal C\not\models \Sigma$. Let $\A$ be the structure with domain $C\cup \mathcal{C}$ and $D^{\A}\coloneqq C$, $F_n^\A\coloneqq\{f\in\mathcal C\mid f\text{ is $n$-ary}\}$, $E_n^\A\coloneqq\{(f,x_1,\dots,x_n,y)\mid f\in F_n^\A, x_1,\dots,x_n,y\in D, f(x_1,\dots,x_n)=y\}$, $f^\A\coloneqq\tilde f$ for every $n\in\N^+\!$ and every $f\in\kappa$. By construction $\A\models T_{\Gamma,\Sigma}$. Hence $T_{\Gamma,\Sigma}$ is satisfiable.

If $T_{\Gamma,\Sigma}$ is satisfiable, then it has some model $\A$. Let $C=D^\A$. For every $n\in\N^+\!$ and $f\in F_n^\A$ let $\bar f\colon C^n\to C$ be the operation with graph $\{(c_1,\dots,c_n,d)\mid (f,c_1,\dots,c_n,d)\in E_n^\A\}$. Define $\mathcal{C}\coloneqq\{\bar f\mid n\in\N^+, f\in F_n^\A\}$. By construction $\mathcal{C}$ is a clone over domain $C$ that satisfies $\Gamma$, witnessed by the assignment $f\mapsto \bar f^\A$, and does not satisfy $\Sigma$. Hence $\Gamma\not\Rightarrow\Sigma$.

If $\Gamma\Rightarrow\Sigma$ we have that theory $T_{\Gamma,\Sigma}$ is unsatisfiable. Hence, by compactness, there is a finite $T'\subseteq T$ that is also unsatisfiable. Since $T'$ is finite there must be a finite $\Gamma'\subseteq\Gamma$  such that $T'\subseteq T_{\Gamma',\Sigma}$. Hence, $T_{\Gamma',\Sigma}$ is unsatisfiable and $\Gamma'\Rightarrow\Sigma$.
\end{proof}

Note that the restriction to height 1 identities is not essential. The proof of Theorem~\ref{thm:compactness} can easily be adapted to the case where $\Gamma$ and $\Sigma$ are sets of identities (not necessarily height 1). 
From Theorem~\ref{thm:compactness},  Corollary~\ref{cor:bulletLoopConditions}, and Theorem~\ref{thm:implicationSingleClc} we obtain the following corollary.

\begin{corollary}\label{thm:implicationClc}
Let $\Gamma$ be a set of cyclic loop conditions and $D\subset\N^+\!$ be finite. Then the following are equivalent:
\begin{enumerate}[label=(\arabic*)]
    \item $\Gamma\Rightarrow\Sigma_D$.
    
    \item There is a finite $\Gamma'\subseteq\Gamma$ such that $\Gamma'\Rightarrow \Sigma_D$.
    \item There is a finite $\Gamma'\subseteq\Gamma$ such that for any $c\in \N^+\!$ we have $\Pol(\D\dotdiv c)\models\Gamma'$ implies $\Pol(\D\dotdiv c)\models\Sigma_D$.
    \item There are $\Sigma_{C_1},\dots,\Sigma_{C_n}\in\Gamma$ and $k_1,\dots,k_n\in\N^+\!$ with $\C_1^{\bullet k_1}\mathbin\bullet\cdots\mathbin\bullet\C_n^{\bullet k_n} \to \D$.
\end{enumerate}
\end{corollary}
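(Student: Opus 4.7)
The plan is to establish the cycle $(1) \Rightarrow (2) \Rightarrow (3) \Rightarrow (4) \Rightarrow (1)$, with each step invoking exactly one of the three ingredients assembled in this section.

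For $(1) \Rightarrow (2)$ I would apply Theorem~\ref{thm:compactness} with the finite set $\Sigma = \{\Sigma_D\}$. The implication $(2) \Rightarrow (3)$ is immediate, since $\Pol(\D \dotdiv c)$ is a clone for every $c$, and $\Rightarrow$ quantifies over all clones. Conversely, for $(4) \Rightarrow (1)$, an iterated application of Corollary~\ref{cor:bulletLoopConditions} gives
\[\{\Sigma_{C_1}, \dots, \Sigma_{C_n}\} \Leftrightarrow \{\Sigma_{C_1^{k_1}},\dots,\Sigma_{C_n^{k_n}}\} \Leftrightarrow \Sigma_{C_1^{k_1}\cdot\ldots\cdot C_n^{k_n}},\]
and then Theorem~\ref{thm:loopolsak} applied to the homomorphism $\C_1^{\bullet k_1}\mathbin\bullet\cdots\mathbin\bullet\C_n^{\bullet k_n} \to \D$ yields $\Sigma_{C_1^{k_1}\cdot\ldots\cdot C_n^{k_n}} \Rightarrow \Sigma_D$, whence $\Gamma \Rightarrow \Sigma_D$.

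The substantial step is $(3) \Rightarrow (4)$. Write the finite witness as $\Gamma' = \{\Sigma_{C_1}, \dots, \Sigma_{C_n}\}$ and set $C \coloneqq C_1 \cdot \ldots \cdot C_n$; by Corollary~\ref{cor:bulletLoopConditions} one has $\Gamma' \Leftrightarrow \Sigma_C$. Since equivalent sets of identities are satisfied by exactly the same clones, (3) translates to: for every $c \in \N^+$, $\Pol(\D \dotdiv c) \models \Sigma_C$ implies $\Pol(\D \dotdiv c) \models \Sigma_D$. This is precisely clause~(2) of Theorem~\ref{thm:implicationSingleClc} for the pair $(\Sigma_C, \Sigma_D)$, so that theorem supplies a $k \in \N^+$ with $\C^{\bullet k} \to \D$. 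Since $\mathbin\bullet$ acts on associated sets by $C \cdot D$, the set associated to $\C^{\bullet k}$ is $C^k = C_1^k \cdot \ldots \cdot C_n^k$, which is also the set associated to $\C_1^{\bullet k} \mathbin\bullet \cdots \mathbin\bullet \C_n^{\bullet k}$. The two disjoint unions of cycles are therefore homomorphically equivalent, and taking $k_i = k$ for all $i$ establishes~(4).

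The only real obstacle is the bookkeeping of associated sets in $(3) \Rightarrow (4)$: once one observes that $\mathbin\bullet$ is commutative and associative on the associated sets, and that $\C^{\bullet k}$ and $\C_1^{\bullet k} \mathbin\bullet \cdots \mathbin\bullet \C_n^{\bullet k}$ produce the same set of cycle lengths (hence are homomorphically equivalent), the reduction of the multi-condition case to the single-condition Theorem~\ref{thm:implicationSingleClc} becomes automatic. Everything else is pure assembly of compactness, the ``bullet collapse'' of Corollary~\ref{cor:bulletLoopConditions}, and Theorem~\ref{thm:loopolsak}.
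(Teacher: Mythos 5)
Your proof is correct and assembles the corollary from exactly the ingredients the paper cites (Theorem~\ref{thm:compactness}, Corollary~\ref{cor:bulletLoopConditions}, Theorem~\ref{thm:implicationSingleClc}, plus Theorem~\ref{thm:loopolsak} as in the paper's own proof of the single-condition case); the paper leaves this assembly implicit, and your write-out, including the bookkeeping that $\C^{\bullet k}$ and $\C_1^{\bullet k}\mathbin\bullet\cdots\mathbin\bullet\C_n^{\bullet k}$ have the same associated set and are hence homomorphically equivalent, is the intended argument.
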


Theorem~\ref{thm:implicationSingleClc} and~\ref{thm:implicationClc} provide a simple characterization of the implication order on $\mCL$ and $\MCL$, respectively. However, we did not yet achieve the goal of obtaining a comprehensible description of these posets.

\subsection{Irreducible cyclic loop conditions}\label{ssc:irredClc}

We take a second look at Corollary~\ref{cor:bulletLoopConditions}. It states in particular that the cyclic loop condition $\Sigma_{C_1\cdot\ldots\cdot C_n}$ is equivalent to the set $\{\Sigma_{C_1},\dots,\Sigma_{C_n}\}$. This suggests that one can replace a single cyclic loop condition by a set of possibly simpler cyclic loop conditions.

\begin{definition}
Let $\Sigma$ be a cyclic loop condition. A \emph{decomposition} of $\Sigma$ is a set of pairwise incomparable cyclic loop conditions $\Gamma$ such that $\Sigma$ and $\Gamma$ are equivalent. The condition $\Sigma$ is called \emph{irreducible} if every decomposition of $\Sigma$ contains exactly one element.
\end{definition}

\begin{example}\label{exa:decomp}
Some examples of decompositions.
\begin{multicols}{2}
\begin{itemize}
    \item $\Sigma_6 \Leftrightarrow\{\Sigma_2, \Sigma_3\}$
    \item $\Sigma_{30} \Leftrightarrow \{\Sigma_2, \Sigma_{15}\} \Leftrightarrow {}$\hbox to 1cm {$\{\Sigma_2, \Sigma_3, \Sigma_5\}$}
\item $\Sigma_{6,20} \Leftrightarrow \{ \Sigma_2, \Sigma_{3,5}\}$
\item     $\Sigma_{2,15} \Leftrightarrow \{\Sigma_{2,3},  \Sigma_{2,5}\}$
\end{itemize}
\end{multicols}

Note that all decompositions except for $\{\Sigma_2, \Sigma_{15}\}$ contain only irreducible cyclic loop conditions.
\eoe
\end{example}

Observe that all irreducible  cyclic loop conditions from the example are prime cyclic loop conditions.

\begin{lemma}\label{lem:pclcAreImpliedBySingleCond}
Let $\Sigma_P$ be a prime cyclic loop condition and let $\Gamma$ be a set of cyclic loop conditions such that $\Gamma\Rightarrow\Sigma_P$. Then there is a $\Sigma\in\Gamma$ with $\Sigma\Rightarrow\Sigma_P$.
\end{lemma}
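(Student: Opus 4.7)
The plan is to reduce from sets of cyclic loop conditions to a single cyclic loop condition via \textbf{Corollary~\ref{cor:bulletLoopConditions}}, then use the divisibility characterization in \textbf{Theorem~\ref{thm:implicationSingleClc}(3)} together with the primality of elements of $P$ to pull out one $C_i$ that already works.

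First, I would apply \textbf{Corollary~\ref{thm:implicationClc}} to the hypothesis $\Gamma \Rightarrow \Sigma_P$ (note $\Sigma_P$ is a single identity, so finite) to obtain cyclic loop conditions $\Sigma_{C_1},\dots,\Sigma_{C_n} \in \Gamma$ such that $\{\Sigma_{C_1},\dots,\Sigma_{C_n}\} \Rightarrow \Sigma_P$. By \textbf{Corollary~\ref{cor:bulletLoopConditions}}, this is equivalent to $\Sigma_{C_1 \cdot \ldots \cdot C_n} \Rightarrow \Sigma_P$. Now I would apply condition (3) of \textbf{Theorem~\ref{thm:implicationSingleClc}}: for every $a \in C_1 \cdot \ldots \cdot C_n$ there exist $p \in P$ and $k \in \N$ with $p \mid a^k$. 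Since $p$ is prime, $p \mid a^k$ forces $p \mid a$. Hence every element $a_1 \cdots a_n$ of $C_1 \cdot \ldots \cdot C_n$ is divisible by some prime in $P$.

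Next I would argue by contradiction. Suppose that no $\Sigma_{C_i}$ implies $\Sigma_P$. By \textbf{Theorem~\ref{thm:implicationSingleClc}(3)} (applied in the contrapositive), for each $i \in [n]$ there exists $a_i \in C_i$ such that no prime $p \in P$ and no $k \in \N$ satisfy $p \mid a_i^k$; equivalently, no $p \in P$ divides $a_i$. Then $a_1 \cdots a_n$ belongs to $C_1 \cdot \ldots \cdot C_n$, yet by the primality of the elements of $P$, a prime $p$ divides $a_1 \cdots a_n$ if and only if it divides some $a_i$. So no $p \in P$ divides $a_1 \cdots a_n$, contradicting the conclusion of the previous paragraph.

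I do not expect any serious obstacle here; the only delicate point is making sure to use \emph{primality} of the elements of $P$ in two places: once to turn ``$p \mid a^k$'' into ``$p \mid a$'', and once to turn ``$p \mid a_1 \cdots a_n$'' into ``$p \mid a_i$ for some $i$''. Without primality the same statement would fail, for instance $\Sigma_6 \Leftrightarrow \{\Sigma_2,\Sigma_3\}$ shows that neither $\Sigma_2 \Rightarrow \Sigma_6$ nor $\Sigma_3 \Rightarrow \Sigma_6$ despite $\{\Sigma_2,\Sigma_3\} \Rightarrow \Sigma_6$.
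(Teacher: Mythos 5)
Your proof is correct and is essentially the paper's own argument: the paper likewise extracts a finite subset $\{\Sigma_{C_1},\dots,\Sigma_{C_n}\}\subseteq\Gamma$ via Corollary~\ref{thm:implicationClc} and then derives a contradiction by picking, for each $i$, an $a_i\in C_i$ divisible by no $p\in P$ and observing that the product $a_1\cdots a_n$ would then be an element of $C_1\cdot\ldots\cdot C_n$ divisible by no $p\in P$. The only cosmetic difference is that you phrase the final step through condition (3) of Theorem~\ref{thm:implicationSingleClc}, while the paper phrases it through the homomorphism $\C_1\mathbin\bullet\cdots\mathbin\bullet\C_n\to\mathbb P$ and Theorem~\ref{thm:loopolsak}; these are the same divisibility argument in different clothing.
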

\begin{proof}
By Corollary~\ref{cor:flat}, we may  assume all numbers occurring in conditions in $\Gamma$ are square-free. 

Since $\Gamma\Rightarrow\Sigma_P$ we have, by Corollary~\ref{thm:implicationClc}, that there are there are $\Sigma_{C_1},\dots,\Sigma_{C_n}\in\Gamma$  with $\C_1\mathbin\bullet\cdots\mathbin\bullet\C_n \to \mathbb P$.
Assume that $\C_i\not\to\mathbb P$ for any $i$. Then for every $i$ there is an $a_i\in\C_i$ such that no $p\in P$ divides $a_i$. Hence there is a cycle of length $a_1\cdot\ldots\cdot a_n$ in $\C_1\mathbin\bullet\cdots\mathbin\bullet\C_n$ and no $p\in P$ divides $a_1\cdot\ldots\cdot a_n$, a contradiction.
Therefore, there must be an $i$ such that $\C_i\to\mathbb P$. Hence, by Theorem~\ref{thm:loopolsak}, we have that $\Sigma_{C_i}\Rightarrow\Sigma_P$.
\end{proof}

\begin{corollary}\label{lem:pclcAreIrred}
Every prime cyclic loop condition is irreducible.
\end{corollary}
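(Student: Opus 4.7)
The plan is to derive Corollary~\ref{lem:pclcAreIrred} as an immediate consequence of Lemma~\ref{lem:pclcAreImpliedBySingleCond} combined with the pairwise-incomparability condition in the definition of decomposition. Fix a prime cyclic loop condition $\Sigma_P$ and let $\Gamma$ be an arbitrary decomposition of $\Sigma_P$; the goal is to show $|\Gamma|=1$.

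First I would use that $\Gamma\Leftrightarrow\Sigma_P$ implies in particular $\Gamma\Rightarrow\Sigma_P$. Applying Lemma~\ref{lem:pclcAreImpliedBySingleCond} to this, I obtain some $\Sigma\in\Gamma$ with $\Sigma\Rightarrow\Sigma_P$. Conversely, from $\Sigma_P\Rightarrow\Gamma$ (the other half of the equivalence) and $\Sigma\in\Gamma$, I get $\Sigma_P\Rightarrow\Sigma$, so $\Sigma\Leftrightarrow\Sigma_P$.

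Now I would take an arbitrary $\Sigma'\in\Gamma$ and argue $\Sigma=\Sigma'$. Indeed, $\Sigma_P\Rightarrow\Sigma'$ holds by the same reasoning as above, and combining with $\Sigma\Leftrightarrow\Sigma_P$ yields $\Sigma\Rightarrow\Sigma'$. Since $\Gamma$ is by definition a set of pairwise incomparable cyclic loop conditions, the comparability $\Sigma\Rightarrow\Sigma'$ forces $\Sigma=\Sigma'$. Hence $\Gamma=\{\Sigma\}$ has exactly one element, and $\Sigma_P$ is irreducible.

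There is no real obstacle here: essentially all the work has been done in Lemma~\ref{lem:pclcAreImpliedBySingleCond}, and the corollary is a short bookkeeping argument using the definition of decomposition. The only thing to be mildly careful about is the direction of the implication symbol in the definition of $\mCL$ (note that in Definition of $\mCL$ the order is $\Leftarrow$, so ``$\Sigma\Rightarrow\Sigma'$'' is the relevant comparability notion), but nothing beyond tracking conventions is required.
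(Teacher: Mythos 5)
Your proposal is correct and is exactly the intended derivation: the paper states Corollary~\ref{lem:pclcAreIrred} without proof as an immediate consequence of Lemma~\ref{lem:pclcAreImpliedBySingleCond}, and your bookkeeping (extract $\Sigma\in\Gamma$ with $\Sigma\Rightarrow\Sigma_P$, note $\Sigma_P\Rightarrow\Sigma'$ for every $\Sigma'\in\Gamma$, then invoke pairwise incomparability to conclude $\Gamma=\{\Sigma\}$) is precisely the missing glue. Nothing is different from the paper's approach beyond making that glue explicit.
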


The next step is to show that every cyclic loop condition can be decomposed into a set of prime cyclic loop conditions. The next example gives the idea how to find this decomposition.

\begin{example}\label{exa:620}
Let us consider $\Sigma_{6,20}$ from Example \ref{exa:decomp} and let $\Gamma$ be a decomposition of $\Sigma_{6,20}$. Then every condition in $\Gamma$ is implied by $\Sigma_{6,20}$. 
By Lemma~\ref{lem:orderOnLoopcond}, we know that $\Sigma_{6,20}$ implies $\Sigma_{\{6,20\}\dotdiv c}$ for every $c\in\N^+\!$. All nontrivial conditions of this form are presented in Figure~\ref{fig:exampleDotdivPrimeConditions}. 
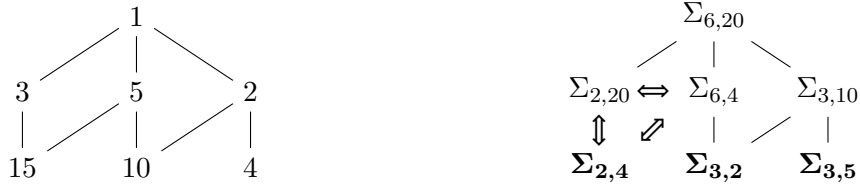
\begin{figure}
\begin{minipage}{0.48\textwidth}
     \centering
    \begin{tikzpicture}
    \node (01) at (0,-2) {$10$};
    
    \node (10) at (-1.5,-1) {$3$};
    \node (11) at (0,-1) {$5$};
    \node (12) at (1.5,-1) {$2$};
    
    \node (20) at (0,0) {$1$};
    \node (02) at (1.5,-2) {$4$};
    \node (00) at (-1.5,-2) {$15$};
    \path 
        (20) edge (10)
        (20) edge (11)
        (20) edge (12)
        
        (10) edge (00) 
        (11) edge (00) 
        (11) edge (01)
        (12) edge (01)
        (12) edge (02)
        ;
    \end{tikzpicture}
   \end{minipage}\hfill
   \begin{minipage}{0.48\textwidth}
     \centering
    \begin{tikzpicture}
    \node (01) at (0,-2) {$\boldsymbol{\Sigma_{3,2}}$};
    
    \node (10) at (-1.5,-1) {$\Sigma_{2,20}$};
    \node (11) at (0,-1) {$\Sigma_{6,4}$};
    \node (12) at (1.5,-1) {$\Sigma_{3,10}$};
    
    \node (20) at (0,0) {$\Sigma_{6,20}$};
    \node (02) at (1.5,-2) {$\boldsymbol{\Sigma_{3,5}}$};
    \node (00) at (-1.5,-2) {$\boldsymbol{\Sigma_{2,4}}$};
    
    \node at (-0.8,-1) {$\Leftrightarrow$};
    \node[rotate=90] at (-1.5,-1.5) {$\Leftrightarrow$};
    \node[rotate=45] at (-0.8,-1.5) {$\Leftrightarrow$};
    \path 
        (20) edge (10)
        (20) edge (11)
        (20) edge (12)
        (11) edge (01)
        (12) edge (01)
        (12) edge (02)
        ;
    \end{tikzpicture}
   \end{minipage}\hfill
   \caption{Some numbers ordered by divisibility (left) and corresponding cyclic loop conditions of the form $\Sigma_{\{6,20\}\dotdiv c}$ ordered by strength (right). }\label{fig:exampleDotdivPrimeConditions}
\end{figure}
We would like to emphasize the following two observations: 
\begin{enumerate}
    \item the conditions  $\Sigma_{2,4},\Sigma_{3,2}, \Sigma_{3,5}$ 
    printed in bold at the bottom of the figure
    are equivalent to prime cyclic loop conditions and
    \item $\Sigma_{6,20}\Leftrightarrow \{ \Sigma_2,\Sigma_{3,2}, \Sigma_{3,5}\}\Leftrightarrow \{ \Sigma_2, \Sigma_{3,5}\}$. \hfill$\triangle$
\end{enumerate} 
\end{example}

\begin{definition}\label{def:minimal}

Let $C\subseteq\N^+\!$ be a finite set. A number $c\in\N^+\!$ is \emph{maximal for} $C$ if $1\notin(C\dotdiv c)$ and $1\in{C\dotdiv (c\cdot d)}$ for all $d>1$ dividing $\lcm(C\dotdiv c)$.
\end{definition}

We now show that the two observations in Example~\ref{exa:620} hold in general.

\begin{lemma}\label{lem:minimalIsPrime}
Let $C\subseteq\N^+\!$ be a finite set and let  $c\in\N^+\!$ be maximal for $C$. 
Then $\Sigma_{C\dotdiv c}$ is equivalent to a prime cyclic loop condition.

\end{lemma}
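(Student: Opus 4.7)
The plan is to define $D \coloneqq C\dotdiv c$ and
\[ P \coloneqq \{\, p \text{ prime} \mid p^k \in D \text{ for some } k \in \N^+\,\}, \]
and then to show directly that $\Sigma_D \Leftrightarrow \Sigma_P$. Since $P$ is a set of primes, $\Sigma_P$ is by definition a prime cyclic loop condition, and this is exactly the equivalence we want. Note that $D$ is nonempty (hence $P$ is too) because $C$ is nonempty and $1 \notin D$ by the first clause of maximality.

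Given the Key Claim below, both directions of the equivalence are immediate via Theorem~\ref{thm:implicationSingleClc}(3): every $d \in D$ has a prime factor $p \in P$ and $p \mid d^1$, giving $\Sigma_D \Rightarrow \Sigma_P$; conversely, each $p \in P$ has $p^k \in D$ for some $k$, and trivially $p^k \mid p^k$, giving $\Sigma_P \Rightarrow \Sigma_D$. So everything reduces to the following.

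\smallskip
\noindent\textbf{Key Claim.} \emph{Every $d \in D$ admits a prime factor $p$ such that $p^k \in D$ for some $k$.}

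\smallskip
I would prove this by induction on the number of distinct prime factors of $d$. If $d$ is itself a prime power, the claim is trivial. Otherwise, pick any prime $p \mid d$ and set $d' \coloneqq d / p^{v_p(d)}$, the prime-to-$p$ part of $d$. Then $d' > 1$ and $d' \mid d \mid \lcm(D)$, so the second clause of the maximality condition for $c$ yields some $a \in C$ with $a \mid c\cdot d'$. A short $v_p$-valuation check shows $a \dotdiv c$ divides $d'$: for any prime $q$,
\[ v_q(a\dotdiv c) \;=\; \max\!\bigl(0,\,v_q(a)-v_q(c)\bigr) \;\leq\; v_q(d'), \]
since $v_q(a) \leq v_q(c) + v_q(d')$. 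Moreover $a \dotdiv c \neq 1$ because $1 \notin D$ (again by maximality). Thus $d'' \coloneqq a \dotdiv c \in D$ is a nontrivial divisor of $d'$; its set of prime factors is a proper subset of that of $d$ (missing $p$), so the induction hypothesis applied to $d''$ produces a prime $q \mid d''$ with $q^k \in D$, and this $q$ divides $d$ as well.

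\smallskip
The main obstacle is the inductive step of the Key Claim, specifically the interplay between the two halves of the maximality condition: one has to use the second clause to find a witness $a$ downstairs in $C$, and then translate it back into $D$ via $\dotdiv c$ while invoking the first clause to guarantee the witness is not trivial. Once this is set up, the valuation check is routine and the reduction to Theorem~\ref{thm:implicationSingleClc}(3) is formal.
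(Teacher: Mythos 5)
Your proof is correct, but it takes a genuinely different route from the paper's. The paper's argument is a direct, induction-free one: given any $a\in C\dotdiv c$ and any prime divisor $p$ of $a$, it applies the second clause of maximality with $d=p$ itself (legitimate since $p>1$ divides $a$, hence divides $\lcm(C\dotdiv c)$), obtaining some $b\in C$ with $b\dotdiv(c\cdot p)=1$; then $b\dotdiv c$ divides $p$ and is not $1$ by the first clause, so $b\dotdiv c=p$ and the prime $p$ already lies in $C\dotdiv c$ with exponent one. Thus every element of $C\dotdiv c$ is a multiple of a prime \emph{in} $C\dotdiv c$, and the equivalence with $\Sigma_P$ follows from the elementary Lemma~\ref{lem:orderOnLoopcond}(3) (adding or deleting multiples of existing cycle lengths does not change the condition). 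You instead apply maximality with $d$ equal to the prime-to-$p$ part of an element, which only yields (after an induction on the number of distinct prime factors) that some prime \emph{power} $p^k$ lies in $D$; this weaker conclusion then forces you to invoke the full characterization of implication from Theorem~\ref{thm:implicationSingleClc}(3) (``$b$ divides $a^k$'') to close the argument, where the paper needs only a homomorphism between the associated digraphs. Your use of Theorem~\ref{thm:implicationSingleClc} is not circular — you only need the direction (3)$\Rightarrow$(1), which is proved before this lemma and does not depend on it — and your valuation check and the well-foundedness of the induction are both sound. The trade-off is simply that the paper's single application of maximality at a prime is sharper and avoids both the induction and the heavier theorem; your version does demonstrate that the weaker ``prime power in $D$'' conclusion already suffices once Theorem~\ref{thm:implicationSingleClc} is available.
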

\begin{proof}

We show that $\Sigma_{C\dotdiv c}$ is equivalent to a prime cyclic loop condition by showing that every $a\in (C\dotdiv c)$ is a multiple of some prime in $C\dotdiv c$.
Let $a$ be in $ (C\dotdiv c)$ and $p$ be a prime divisor of $a$ (which exists since $a\neq1$). 
Since $c$ is maximal for $C$ we have $1\notin(C\dotdiv c)$ and $1\in (C\dotdiv (c\cdot p))$. Hence, $p\in (C\dotdiv c)$ and  $a$ is a multiple of a prime in $ (C\dotdiv c)$, as desired.
\end{proof}

\begin{theorem}\label{thm:clcAreSetsOfPclc}
Every cyclic loop condition is equivalent to a finite set of prime cyclic loop conditions.
\end{theorem}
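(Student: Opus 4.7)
The plan is to exhibit a finite decomposition into prime cyclic loop conditions via a hypergraph-transversal argument. By Corollary~\ref{cor:flat} I may assume every element of $C$ is square-free, and by iterating Lemma~\ref{lem:orderOnLoopcond}(3) I may further remove multiples, so that $C$ is an antichain under divisibility. Under these reductions, for square-free $a$ and $b\in\N^+\!$, $a\mid b$ is equivalent to $P(a)\subseteq P(b)$, where $P(n)$ denotes the set of primes dividing $n$. Let $\mathcal{H}:=\{P(a):a\in C\}$ and let $\mathcal{T}=\{Q_1,\dots,Q_N\}$ be the finite collection of inclusion-minimal transversals of $\mathcal{H}$. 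Each $Q_i$ is a nonempty finite set of primes, so $\Sigma_{Q_i}$ is a prime cyclic loop condition, and I claim $\Sigma_C\Leftrightarrow\{\Sigma_{Q_1},\dots,\Sigma_{Q_N}\}$.

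For the direction $\Sigma_C\Rightarrow\Sigma_{Q_i}$: since $Q_i$ meets every $P(a)$, each $a\in C$ has a prime divisor in $Q_i$, so Theorem~\ref{thm:implicationSingleClc}(3) applies. For the converse I invoke Corollary~\ref{thm:implicationClc}(4) with multiplicities $k_1=\cdots=k_N=1$: it remains to verify that $\mathbb{Q}_1\mathbin\bullet\cdots\mathbin\bullet\mathbb{Q}_N\to\C$, which unpacks to the assertion that for every choice $p_i\in Q_i$ some $a\in C$ divides $p_1\cdots p_N$, i.e.\ $P(a)\subseteq\{p_1,\dots,p_N\}$.

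The heart of the proof is the following short transversal-duality step. Suppose, for contradiction, that no $a\in C$ satisfies $P(a)\subseteq\{p_1,\dots,p_N\}$. For each $a\in C$ choose $q_a\in P(a)\setminus\{p_1,\dots,p_N\}$. Then $\{q_a:a\in C\}$ meets every $P(a)$ and hence contains some minimal transversal $Q_j\in\mathcal{T}$. But then $p_j\in Q_j\subseteq\{q_a:a\in C\}$, so $p_j=q_{a'}$ for some $a'\in C$, contradicting the choice $q_{a'}\notin\{p_1,\dots,p_N\}$.

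The main obstacle I foresee is aligning this presentation with the $\dotdiv$-machinery the paper has set up, in particular Definition~\ref{def:minimal} and Lemma~\ref{lem:minimalIsPrime}. There is a natural translation: in the square-free case, a $c$ maximal for $C$ corresponds to the minimal transversal $Q=\{p\text{ prime}:p\mid\lcm(C),\ p\nmid c\}$, and $\Sigma_{C\dotdiv c}$ reduces via Lemma~\ref{lem:orderOnLoopcond}(3) to $\Sigma_Q$. Hence one may equivalently take $\Gamma=\{\Sigma_{C\dotdiv c}:c\text{ maximal for }C\}$, which is finite up to equivalence since $C\dotdiv c$ depends only on $\gcd(c,\lcm(C)^\infty)$; membership of each element of $\Gamma$ in the class of prime cyclic loop conditions is exactly Lemma~\ref{lem:minimalIsPrime}, and $\Gamma\Rightarrow\Sigma_C$ is proved by the same transversal argument recast in terms of choices $b_i\in C\dotdiv c_i$.
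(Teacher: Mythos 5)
Your proof is correct. It produces the same decomposition as the paper's --- as you note yourself, in the square-free case the conditions $\Sigma_{C\dotdiv c}$ for $c$ maximal for $C$ are, up to equivalence, exactly the $\Sigma_{Q}$ for $Q$ a minimal transversal of $\{P(a)\mid a\in C\}$ --- but the verification of the hard direction is genuinely different. The paper works with an arbitrary $C$, which forces it to raise each factor to the power $k$ given by the largest prime-power exponent occurring in $C$ before applying Corollary~\ref{thm:implicationClc}(4), and its closing contradiction (no $a\in C$ divides $c^k$, yet $c^k\cdot d$ is maximal for $C$ for some $d$) is a $\dotdiv$-encoded version of your transversal-duality step. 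By invoking Corollary~\ref{cor:flat} and Lemma~\ref{lem:orderOnLoopcond} up front you may take all exponents $k_i=1$ in Corollary~\ref{thm:implicationClc}(4), and the combinatorial core becomes the standard fact that every system of representatives of the minimal transversals of a finite hypergraph covers some edge; this is cleaner and more modular, at the small price of delegating the general case to the square-free reduction. No circularity arises: Corollary~\ref{cor:flat} and the implications (3)$\Rightarrow$(4)$\Rightarrow$(1) of Theorem~\ref{thm:implicationSingleClc} that you use are established within Section~\ref{sec:conditions} and do not depend on the deferred Lemma~\ref{lem:missingImplicationSingleClc}. Two points you should make explicit in a final write-up: if $1\in C$ then $P(1)=\emptyset$, so $\mathcal{T}=\emptyset$ and $\Sigma_C$ is trivial, and the claimed equivalence with the empty set of prime conditions still holds; and the existence of a minimal transversal inside $\{q_a\mid a\in C\}$ uses the finiteness of the ground set $\bigcup_{a\in C}P(a)$.
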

\begin{proof}
Let $C\subset\N^+\!$ be finite and consider the cyclic loop condition $\Sigma_C$. Without loss of generality assume that $C$ is square-free. 
Let $c_1,\dots, c_n\in\N^+\!$ be such that 
\[\{\Sigma_{C\dotdiv c_1},\dots,\Sigma_{C\dotdiv c_n}\}=\{\Sigma_{C\dotdiv c}\mid c\text{ is maximal for }C\}.\] 
By Lemma~\ref{lem:orderOnLoopcond}, we have that  $\Sigma_C\Rightarrow\{\Sigma_{C\dotdiv c_1},\dots,\Sigma_{C\dotdiv c_n}\}$.
From Lemma~\ref{lem:minimalIsPrime} we know that for every $i$ the condition  $\Sigma_{C\dotdiv c_i}$ is equivalent to a prime cyclic loop condition. 
Assume  for contradiction that 
\[(\C\dotdiv c_1)\mathbin\bullet\dots\mathbin\bullet(\C\dotdiv c_n)\not\to\C.\]
Hence, there are $a_1,\dots,a_n$ with $a_i\in C\dotdiv c_i$ such that no $a\in C$ divides $c := a_1\cdot\ldots\cdot a_n$.
Therefore, $1\notin C\dotdiv c$ and the cyclic loop condition $\Sigma_{C\dotdiv c}$ is not trivial.
Let $d\in \N^+\!$ be such that $c\cdot d$ is maximal for $C$. 
Then there is an $i$ such that $\Sigma_{C\dotdiv c\cdot d}= \Sigma_{C\dotdiv c_i}$. 
Note that the number $a_i$ can not be in $C\dotdiv c\cdot d$,  because $C$ is square-free. Therefore, $C\dotdiv c\cdot d\neq C\dotdiv c_i$ for all $i$. A contradiction.
By Corollary~\ref{thm:implicationClc} it follows that 
$\{\Sigma_{C\dotdiv c_1},\dots,\Sigma_{C\dotdiv c_n}\}\Rightarrow\Sigma_C$, 
as desired.
\end{proof}

Note that, as seen in Example~\ref{exa:620}, the set $\Gamma\coloneqq\{\Sigma_{C\dotdiv c}\mid c\text{ is maximal for }C\}$ is not necessarily a decomposition of $\Sigma_C$. However, the set consisting of the strongest conditions in $\Gamma$ is a decomposition of $\Sigma_C$. This observation together with Theorem~\ref{thm:clcAreSetsOfPclc} and Corollary~\ref{lem:pclcAreIrred} yields the following corollary.

\begin{corollary}
Let $\Sigma$ be a cyclic loop condition. Then
\begin{enumerate}
    \item $\Sigma$ is irreducible iff $\Sigma$ is equivalent to a prime cyclic loop condition and
    \item there is a  finite decomposition of $\Sigma$ which consists of prime cyclic loop conditions.
\end{enumerate}
\end{corollary}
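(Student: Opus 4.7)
The plan is to deduce both parts from three results already established: Theorem~\ref{thm:clcAreSetsOfPclc} (every cyclic loop condition is equivalent to a finite set of prime cyclic loop conditions), Corollary~\ref{lem:pclcAreIrred} (prime cyclic loop conditions are irreducible), and Lemma~\ref{lem:pclcAreImpliedBySingleCond} (any set of cyclic loop conditions implying a prime one already contains a single element that does). Together with the concrete description of the order on prime cyclic loop conditions from Corollary~\ref{cor:implicationSinglePClc}, both parts essentially reduce to order-theoretic bookkeeping.

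For part (2), I would start from a finite set $\Gamma_0$ of prime cyclic loop conditions equivalent to $\Sigma$ supplied by Theorem~\ref{thm:clcAreSetsOfPclc}, and trim it to an antichain. Concretely, discard any $\Sigma_Q\in\Gamma_0$ for which some other $\Sigma_P\in\Gamma_0$ with $P\neq Q$ satisfies $\Sigma_P\Rightarrow\Sigma_Q$; by Corollary~\ref{cor:implicationSinglePClc} this is witnessed by the inclusion order on the index sets, and since $\Gamma_0$ is finite the process terminates. The resulting set $\Gamma\subseteq\Gamma_0$ is pairwise incomparable and consists of prime cyclic loop conditions. Moreover it remains equivalent to $\Gamma_0$, since every discarded element is implied by a surviving one and hence its removal does not weaken the conjunction. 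Thus $\Gamma$ is a finite decomposition of $\Sigma$ into prime cyclic loop conditions.

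For part (1), the direction ($\Rightarrow$) is immediate from part~(2): if $\Sigma$ is irreducible then the decomposition $\Gamma$ produced above must contain exactly one element, so $\Sigma$ is equivalent to a single prime cyclic loop condition. For ($\Leftarrow$), suppose $\Sigma\Leftrightarrow\Sigma_P$ with $\Sigma_P$ prime, and let $\Delta$ be any decomposition of $\Sigma$. From $\Delta\Rightarrow\Sigma\Rightarrow\Sigma_P$ and Lemma~\ref{lem:pclcAreImpliedBySingleCond} I obtain some $\Sigma'\in\Delta$ with $\Sigma'\Rightarrow\Sigma_P$; combined with $\Sigma_P\Leftrightarrow\Sigma\Rightarrow\Sigma'$ this yields $\Sigma'\Leftrightarrow\Sigma$. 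Then for any $\Sigma''\in\Delta$ one has $\Sigma'\Leftrightarrow\Sigma\Rightarrow\Sigma''$, and pairwise incomparability of $\Delta$ forces $\Sigma''=\Sigma'$, giving $|\Delta|=1$ as required.

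I do not expect any real obstacle, since the heavy lifting is already done in Theorem~\ref{thm:clcAreSetsOfPclc} and Lemma~\ref{lem:pclcAreImpliedBySingleCond}. The one subtlety worth flagging is that a decomposition is required to be an antichain, so Theorem~\ref{thm:clcAreSetsOfPclc} alone does not suffice for part~(2); the trimming step is essential, and its correctness relies on the explicit description of the implication order between prime cyclic loop conditions.
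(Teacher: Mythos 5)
Your proof is correct and follows essentially the same route as the paper: part (2) comes from the finite set of prime cyclic loop conditions supplied by Theorem~\ref{thm:clcAreSetsOfPclc}, trimmed to its strongest (minimal) members to obtain an antichain, and part (1) combines this with the irreducibility of prime cyclic loop conditions, which you correctly re-derive from Lemma~\ref{lem:pclcAreImpliedBySingleCond} exactly as the paper does in Corollary~\ref{lem:pclcAreIrred}. The only cosmetic difference is that the paper works with the explicit set $\{\Sigma_{C\dotdiv c}\mid c\text{ maximal for }C\}$ rather than an abstract $\Gamma_0$, but that is the same set produced inside the proof of Theorem~\ref{thm:clcAreSetsOfPclc}.
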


The next step towards finding a comprehensible description of $\mCL$ and $\MCL$ is to understand 
the poset of sets of prime cyclic loop conditions ordered by strength.

\begin{definition}
We introduce the poset $\MPCL$ as follows
\begin{align*}
\MPCL&\coloneqq (\{[\Sigma]\mid \Sigma \text{ a set of prime cyclic loop conditions}\},\Leftarrow).
\end{align*}
\end{definition} 

From Theorem~\ref{thm:clcAreSetsOfPclc} we obtain the following corollary.
\begin{corollary}
We have that $\MCL=\MPCL$.
\end{corollary}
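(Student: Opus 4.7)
The plan is to verify two things: the underlying sets of $\MCL$ and $\MPCL$ coincide as collections of equivalence classes, and the orders agree. The order statement is immediate, since both posets inherit the relation $\Leftarrow$ from Definition~\ref{def:orderOnConditions}; restricting the same relation to a subclass preserves it. So the real content is the equality of underlying sets.

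The inclusion $\MPCL\subseteq\MCL$ is trivial: every set of prime cyclic loop conditions is a set of cyclic loop conditions, so every equivalence class represented in $\MPCL$ is already represented in $\MCL$.

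For the converse, I take an arbitrary $[\Sigma]\in\MCL$ with $\Sigma$ a set of cyclic loop conditions. For each $\sigma\in\Sigma$, Theorem~\ref{thm:clcAreSetsOfPclc} produces a finite set $\Gamma_\sigma$ of prime cyclic loop conditions with $\sigma\Leftrightarrow\Gamma_\sigma$. Define $\Gamma\coloneqq\bigcup_{\sigma\in\Sigma}\Gamma_\sigma$; this is a set of prime cyclic loop conditions. I then claim $\Sigma\Leftrightarrow\Gamma$. Unwinding the definition of $\models$ on sets, a clone $\mathcal C$ satisfies $\Sigma$ if and only if $\mathcal C\models\sigma$ for every $\sigma\in\Sigma$, which by the equivalences $\sigma\Leftrightarrow\Gamma_\sigma$ holds if and only if $\mathcal C\models\Gamma_\sigma$ for every $\sigma$, i.e., $\mathcal C\models\Gamma$. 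Hence $[\Sigma]=[\Gamma]\in\MPCL$, giving $\MCL\subseteq\MPCL$.

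There is no real obstacle here; the corollary is essentially a bookkeeping consequence of Theorem~\ref{thm:clcAreSetsOfPclc}. The only point worth flagging is the elementary step that equivalences of individual cyclic loop conditions lift to equivalence of the union, which follows at once from the componentwise reading of satisfaction for sets of height 1 identities.
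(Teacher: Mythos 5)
Your proof is correct and follows exactly the route the paper intends: the corollary is stated as an immediate consequence of Theorem~\ref{thm:clcAreSetsOfPclc}, obtained by replacing each cyclic loop condition in a set by an equivalent finite set of prime cyclic loop conditions and taking the union, just as you do. The paper gives no further details, so your write-up simply makes the implicit bookkeeping explicit.
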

 
Combining Lemma~\ref{lem:pclcAreImpliedBySingleCond} and Corollary~\ref{cor:implicationSinglePClc} we obtain the following result.
\begin{corollary}\label{cor:implicationPclc}
Let $\Gamma$ be a set of prime cyclic loop conditions and $Q$ a finite nonempty set of primes. Then the following are equivalent:
\begin{enumerate}[label=(\arabic*)]
    \item We have $\Gamma\Rightarrow\Sigma_Q$.
    
    \item There is a $\Sigma_P\in\Gamma$ such that $\Sigma_P\Rightarrow \Sigma_Q$.
    \item There is a $\Sigma_P\in\Gamma$ such that  $\Pol(\mathbb Q)\not\models\Sigma_P$.
    \item There is a $\Sigma_{P}\in\Gamma$  with $P\subseteq Q$.
    \item There is a $\Sigma_{P}\in\Gamma$  with $\mathbb P \to \mathbb Q$.
    
\end{enumerate}
\end{corollary}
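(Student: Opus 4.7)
The plan is to reduce everything to the single-condition case, exploiting the fact that the target $\Sigma_Q$ is itself a prime cyclic loop condition. First I would observe that the implication $(2)\Rightarrow(1)$ is immediate from the definition of the implication order: if some member $\Sigma_P\in\Gamma$ already implies $\Sigma_Q$, then any clone satisfying the (stronger) system $\Gamma$ in particular satisfies $\Sigma_P$, hence $\Sigma_Q$. This closes the ``hard'' direction of the cycle $(1)\Rightarrow(2)\Rightarrow\cdots\Rightarrow(1)$ for free.

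The key step is $(1)\Rightarrow(2)$. Here I would invoke Lemma~\ref{lem:pclcAreImpliedBySingleCond} directly: since $\Sigma_Q$ is a prime cyclic loop condition and $\Gamma\Rightarrow\Sigma_Q$, the lemma guarantees the existence of some $\Sigma\in\Gamma$ with $\Sigma\Rightarrow\Sigma_Q$. By assumption every element of $\Gamma$ is of the form $\Sigma_P$ for some set of primes $P$, so this $\Sigma$ is precisely the desired $\Sigma_P\in\Gamma$ with $\Sigma_P\Rightarrow\Sigma_Q$. This is the only nontrivial ingredient; no new combinatorial content is needed because the work was done in Lemma~\ref{lem:pclcAreImpliedBySingleCond} via the $\mathbin\bullet$-product argument on disjoint unions of prime cycles.

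Finally, for the equivalence of $(2)$ with each of $(3)$, $(4)$, $(5)$, I would just apply Corollary~\ref{cor:implicationSinglePClc} separately to the chosen $\Sigma_P\in\Gamma$ and $\Sigma_Q$: that corollary already states that, among prime cyclic loop conditions, $\Sigma_P\Rightarrow\Sigma_Q$ is equivalent to $\Pol(\mathbb Q)\not\models\Sigma_P$, to $P\subseteq Q$, and to $\mathbb P\to\mathbb Q$. Since all four conditions $(2),(3),(4),(5)$ are of the form ``there exists $\Sigma_P\in\Gamma$ such that (something equivalent under Cor.~\ref{cor:implicationSinglePClc})'', existentially quantifying the equivalences yields the equivalence of $(2)$--$(5)$.

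In summary, the proof is essentially a one-line application of Lemma~\ref{lem:pclcAreImpliedBySingleCond} to get a single witness $\Sigma_P\in\Gamma$, followed by a termwise application of Corollary~\ref{cor:implicationSinglePClc} to translate between the different characterizations. I expect no obstacles: the only substantive content, namely the fact that a prime cyclic loop condition cannot be ``truly'' implied by a set without being implied by one of its members, is already encapsulated in the cited lemma, whose proof relied on the earlier analysis of $\mathbin\bullet$-products of prime cycles and Theorem~\ref{thm:loopolsak}.
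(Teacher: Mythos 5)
Your proposal is correct and is precisely the paper's argument: the paper derives this corollary by "combining Lemma~\ref{lem:pclcAreImpliedBySingleCond} and Corollary~\ref{cor:implicationSinglePClc}", which is exactly your reduction of $(1)\Leftrightarrow(2)$ via the single-witness lemma followed by a termwise application of the single-condition equivalences under the existential quantifier.
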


Let $\Gamma$ be a set of cyclic loop conditions and $\Gamma'\coloneqq\{\Sigma_P\in\mPCL\mid \Gamma\Rightarrow\Sigma_P \}$. Then $\Gamma\Leftrightarrow\Gamma'$ and $\Gamma'$ is a downset of $\mPCL$. Corollary~\ref{cor:implicationPclc} implies that if $\Gamma$ is a downset  of $\mPCL$, then $\Gamma=\Gamma'$. Hence the elements of $\MPCL$ correspond to downsets of $\mPCL$. 

\begin{corollary}\label{cor:characterizationMCLAndmCL}
We have that 
$\MCL=\MPCL\simeq (\operatorname{Downsets Of}(\mPCL),\subseteq)$ and $\mCL\simeq(\operatorname{Finitely Generated Downsets Of}(\mPCL),\subseteq)$.
\end{corollary}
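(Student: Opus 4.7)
The plan is to exhibit both order isomorphisms simultaneously via the single map
\[\iota \colon [\Gamma] \longmapsto \Gamma' \coloneqq \{\Sigma_P \in \mPCL \mid \Gamma \Rightarrow \Sigma_P\},\]
and then check that its image on $\MPCL$ is exactly $\mathrm{Downsets}(\mPCL)$, while its image on $\mCL$ is exactly $\mathrm{FGDownsets}(\mPCL)$. The equality $\MCL = \MPCL$ is already in hand from the preceding corollary, so the first isomorphism will give both $\MCL$ and $\MPCL$ at once.

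First I would check that $\iota$ is well-defined and order-preserving. Given $\Gamma \subseteq \mPCL$, the set $\Gamma'$ is a downset by transitivity of $\Rightarrow$, and $\Gamma \Leftrightarrow \Gamma'$ because $\Gamma \subseteq \Gamma'$ is trivial (hence $\Gamma' \Rightarrow \Gamma$) while $\Gamma \Rightarrow \Gamma'$ holds by definition of $\Gamma'$. Hence $[\Gamma] = [\Gamma']$ in $\MPCL$, so $\iota$ depends only on the equivalence class and lands in downsets. Monotonicity is immediate: $\Gamma_1 \Rightarrow \Gamma_2$ if and only if $\Gamma_2' \subseteq \Gamma_1'$, directly from the definition of $\Gamma_i'$.

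The key step is injectivity, and this is where Corollary~\ref{cor:implicationPclc} does the work. Suppose $\Gamma_1, \Gamma_2$ are two downsets of $\mPCL$ with $\Gamma_1 \Leftrightarrow \Gamma_2$. For every $\Sigma_Q \in \Gamma_1$ we have $\Gamma_2 \Rightarrow \Sigma_Q$, so by Corollary~\ref{cor:implicationPclc} there exists $\Sigma_P \in \Gamma_2$ with $\Sigma_P \Rightarrow \Sigma_Q$; since $\Gamma_2$ is a downset, $\Sigma_Q \in \Gamma_2$. By symmetry $\Gamma_1 = \Gamma_2$. Combined with surjectivity onto downsets (since each downset is its own image under $\iota$), this establishes $\MPCL \simeq (\mathrm{Downsets}(\mPCL), \subseteq)$, and hence $\MCL \simeq (\mathrm{Downsets}(\mPCL), \subseteq)$.

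Finally, I would identify the image of $\mCL$ under this isomorphism. By Theorem~\ref{thm:clcAreSetsOfPclc}, every cyclic loop condition $\Sigma$ is equivalent to a finite set $\{\Sigma_{P_1}, \ldots, \Sigma_{P_n}\} \subseteq \mPCL$, whose associated downset is precisely the downset generated by these $n$ elements; so $\iota([\Sigma])$ is finitely generated. Conversely, any finitely generated downset is equivalent to its finite generating set $\{\Sigma_{P_1}, \ldots, \Sigma_{P_n}\}$, which by Corollary~\ref{cor:bulletLoopConditions} is equivalent to the single cyclic loop condition $\Sigma_{P_1 \cdot \ldots \cdot P_n} \in \mCL$. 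I do not anticipate a substantive obstacle: the heavy lifting has been done in Corollary~\ref{cor:implicationPclc} (which gives injectivity for downsets), Theorem~\ref{thm:clcAreSetsOfPclc} (which provides the finite decomposition needed to land in $\mathrm{FGDownsets}(\mPCL)$), and Corollary~\ref{cor:bulletLoopConditions} (which reverses the latter).
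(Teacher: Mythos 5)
Your proposal is correct and follows essentially the same route as the paper: the paper also works with the map $\Gamma\mapsto\Gamma'=\{\Sigma_P\in\mPCL\mid\Gamma\Rightarrow\Sigma_P\}$, uses Corollary~\ref{cor:implicationPclc} to see that equivalent downsets of $\mPCL$ coincide, and identifies the image of $\mCL$ via Theorem~\ref{thm:clcAreSetsOfPclc} and Corollary~\ref{cor:bulletLoopConditions}. Your write-up merely spells out the well-definedness, monotonicity, and injectivity checks that the paper leaves implicit.
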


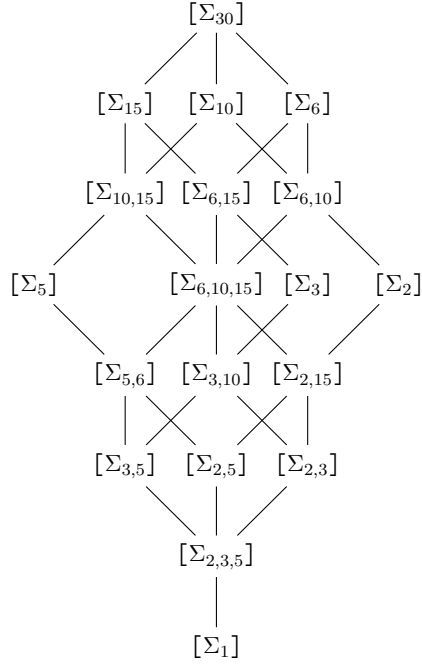
\begin{figure}
    \centering
        \begin{tikzpicture}[scale=0.6]
\def\myScale{0.8}

\node[scale=\myScale] (0) at (2,-2)  {$[\Sigma_{1}]$};
\node[scale=\myScale] (00) at (2,0)  {$[\Sigma_{2,3,5}]$};
\node[scale=\myScale] (11) at (0,2)  {$[\Sigma_{3,5}]$};
\node[scale=\myScale] (12) at (2,2) {$[\Sigma_{2,5}]$};
\node[scale=\myScale] (13) at (4,2) {$[\Sigma_{2,3}]$};
\node[scale=\myScale] (21) at (0,4) {$[\Sigma_{5,6}]$};
\node[scale=\myScale] (22) at (2,4) {$[\Sigma_{3,10}]$};
\node[scale=\myScale] (23) at (4,4) {$[\Sigma_{2,15}]$};
\node[scale=\myScale] (31) at (-2,6) {$[\Sigma_{5}]$};
\node[scale=\myScale] (33) at (4,6) {$[\Sigma_{3}]$};
\node[scale=\myScale] (34) at (6,6) {$[\Sigma_{2}]$};
\node[scale=\myScale] (32) at (2,6) {$[\Sigma_{6,10,15}]$};
\node[scale=\myScale] (41) at (0,8) {$[\Sigma_{10,15}]$};
\node[scale=\myScale] (42) at (2,8) {$[\Sigma_{6,15}]$};
\node[scale=\myScale] (43) at (4,8) {$[\Sigma_{6,10}]$};
\node[scale=\myScale] (51) at (0,10) {$[\Sigma_{15}]$};
\node[scale=\myScale] (52) at (2,10) {$[\Sigma_{10}]$};
\node[scale=\myScale] (53) at (4,10) {$[\Sigma_{6}]$};
\node[scale=\myScale] (60) at (2,12) {$[\Sigma_{30}]$};
\path 
    (0)  edge (00)
    (00) edge (11)
    (00) edge (12)
    (00) edge (13)
    (11) edge (21)
    (11) edge (22)
    (12) edge (21)
    (12) edge (23)
    (13) edge (22)
    (13) edge (23)
    (21) edge (31)
    (21) edge (32)
    (22) edge (32)
    (22) edge (33)
    (23) edge (32)
    (23) edge (34)
    (31) edge (41)
    (32) edge (41)
    (32) edge (42)
    (32) edge (43)
    (33) edge (42)
    (34) edge (43)
    (41) edge (51)
    (41) edge (52)
    (42) edge (51)
    (42) edge (53)
    (43) edge (52)
    (43) edge (53)
    (51) edge (60)
    (52) edge (60)
    (53) edge (60)
    ;
\end{tikzpicture}
    \caption{The poset $\mCL$ restricted to cyclic loop conditions using 2, 3, and 5.}
    \label{fig:conditionPoset}
\end{figure}
A subposet of $\mCL$ is represented in Figure~\ref{fig:conditionPoset}.
We will discuss more properties of the posets $\mCL$ and $\MPCL$ in Section~\ref{sec:lattice}.

\section{Unions of cycles and cyclic loop conditions}\label{sec:structures}

In this section we describe $\SDPoset$, namely the subposet of $\PPPoset$ containing the pp-constructability types of disjoint unions of cycles.

Firstly, in Lemma~\ref{lem:duofcSatisfyClc} we characterize $\models$ on disjoint unions of cycles and cyclic loop conditions. Using this characterization we prove the missing implication in Theorem~\ref{thm:implicationSingleClc}.
Secondly, in Lemma~\ref{thm:PPvsLoop}, we show that cyclic loop conditions suffice to
determine the pp-constructability order on finite disjoint unions of cycles.
This allows us to view a finite disjoint union of cycles as the set of prime cyclic loop conditions that it satisfies. These sets are always downsets of $\mPCL$. In Lemma~\ref{lem:surjectivety} we show when a downset is realized by a finite disjoint union of cycles. Combining these results we obtain a complete description of $\SDPoset$ as downsets of $\mPCL$.

\subsection{Characterization of the satisfaction relation for unions of cycles and cyclic loop conditions}\label{ssc:models}
In Lemma~\ref{lem:duofcSatisfyClc} we generalize Lemma~\ref{lem:duofpcSatisfyPclc} to disjoint unions of cycles and cyclic loop conditions. 
First, recall that in Example~\ref{exa:C3notmodelsS3} we showed that $\Cyc3\not\models\Sigma_3$ by constructing a suitable tuple. We adopt the same idea in the following example.
\begin{example}\label{exa:C56notmodelsS25}
We show that the structure $\Cyc{5, 6}$ does not satisfy the cyclic loop condition $\Sigma_{2,5}$.
Define
\[\boldsymbol t\coloneqq ((6,0),(6,3),(5,0),(5,3),(5,1),(5,4),(5,2)).\] 
Note that $\boldsymbol t$ is contained in a cycle of length 30 in $\Cyc{5,6}^{\Cyc{2,5}}$. Since $\boldsymbol t+3=\boldsymbol t_{\sigma_{2,5}}$ any $f\in\Pol(\Cyc{5,6})$ with $f\models\Sigma_{2,5}$ satisfies
\begin{align*}
f(\boldsymbol t)
=f_{\sigma_{2,5}}(\boldsymbol t)
=f(\shiftTuple{\boldsymbol t}{\sigma_{2,5}})
=f(\boldsymbol t+3).
\end{align*}
Therefore, $f$ would map $\boldsymbol t$ to an element of $\Cyc{5,6}$ that lies in a cycle whose length divides 3. Hence, such an $f$ does not exist and $\Cyc{5,6}\not\models\Sigma_{2,5}$. 
\eoe
\end{example}

The following lemma shows that the line of reasoning used in Example~\ref{exa:C56notmodelsS25} is the only obstacle for a cyclic loop condition to be satisfied by a disjoint union of cycles.
It characterises the relation $\models$ on disjoint unions of cycles and cyclic loop conditions. Therefore, it will be a main tool in many of the subsequent proofs in this article.

\begin{lemma}\label{lem:duofcSatisfyClc}
Let $\C$ be a finite disjoint union of cycles and $D\subset\N^+\!$ finite. We have $\Pol(\C)\models \Sigma_D$ if and only if for all maps $h\colon D\to C$ there is an $a\in C$ such that
\[a\text{ divides }\lcm(\{h(b)\dotdiv b \mid {b\in D}\}).\]
\end{lemma}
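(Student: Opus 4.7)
The plan is to refine the argument of Lemma~\ref{lem:duofpcSatisfyPclc}, replacing the role played there by coprimality of distinct primes with the arithmetic of $\dotdiv$. The local observation I will use in both directions is the following: if a tuple $\boldsymbol t\in\C^{\mathbb D}$ satisfies $\boldsymbol t_{\sigma_D}=\boldsymbol t+\delta$ for some $\delta\in\N$ and $f\in\Pol(\C)$ satisfies $\Sigma_D$, then
\[f(\boldsymbol t)=f_{\sigma_D}(\boldsymbol t)=f(\boldsymbol t+\delta)=f(\boldsymbol t)+\delta,\]
so the cycle of $\C$ containing $f(\boldsymbol t)$ has length dividing $\delta$.

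For the direction $(\Rightarrow)$, given $h\colon D\to C$ I set $\delta\coloneqq\lcm\{h(b)\dotdiv b\mid b\in D\}$ and construct a tuple by $\boldsymbol t(b,k)\coloneqq(h(b),\,k\cdot\delta\bmod h(b))$ for every $(b,k)\in\mathbb D$. This is well defined exactly because $h(b)\dotdiv b\mid\delta$ is equivalent to $h(b)\mid b\cdot\delta$, so traversing $b$ steps in the $b$-cycle of $\mathbb D$ returns to the chosen starting element of $\Cyc{h(b)}$; and by construction $\boldsymbol t_{\sigma_D}=\boldsymbol t+\delta$. Applying the observation above to an arbitrary $f\models\Sigma_D$ in $\Pol(\C)$ produces an $a\in C$, namely the length of the cycle containing $f(\boldsymbol t)$, with $a\mid\delta$, exactly as required.

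For the direction $(\Leftarrow)$, I plan to mimic the free construction of Lemma~\ref{lem:duofpcSatisfyPclc}. Let $\mathbf G$ be the commutative group of permutations of $\C^{\mathbb D}$ generated by $\sigma_D$ and $+1$; every element has the form $\sigma_D^c\circ(+1)^d$, and the $\mathbf G$-orbits partition $\C^{\mathbb D}$. Fix a set $T$ of orbit representatives. For each $\boldsymbol t\in T$, define $h_{\boldsymbol t}\colon D\to C$ by letting $h_{\boldsymbol t}(b)$ be the length of the cycle of $\C$ containing $\boldsymbol t(b,0)$, use the hypothesis to pick $a_{\boldsymbol t}\in C$ with $a_{\boldsymbol t}\mid\lcm\{h_{\boldsymbol t}(b)\dotdiv b\mid b\in D\}$, and choose $x_{\boldsymbol t}\in\C$ lying on a cycle of length $a_{\boldsymbol t}$. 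Then set $f\bigl((\boldsymbol t+d)_{\sigma_D^c}\bigr)\coloneqq x_{\boldsymbol t}+d$. Well-definedness on the orbit of $\boldsymbol t$ reduces to the implication $\boldsymbol t=(\boldsymbol t+d)_{\sigma_D^c}\Rightarrow a_{\boldsymbol t}\mid d$: the hypothesis unwinds to $\boldsymbol t(b,k)=\boldsymbol t(b,k+c)+d$ for all $(b,k)$, and iterating this identity exactly $b$ times along the $b$-cycle of $\mathbb D$ yields $\boldsymbol t(b,0)=\boldsymbol t(b,0)+b\cdot d$, whence $h_{\boldsymbol t}(b)\mid b\cdot d$, equivalently $h_{\boldsymbol t}(b)\dotdiv b\mid d$; taking $\lcm$ over $b\in D$ and using the choice of $a_{\boldsymbol t}$ gives $a_{\boldsymbol t}\mid d$. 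Finally $f(\boldsymbol s+1)=f(\boldsymbol s)+1$ and $f(\boldsymbol s_{\sigma_D})=f(\boldsymbol s)$ hold by construction on every orbit, so $f\in\Pol(\C)$ and $f\models\Sigma_D$.

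The main delicate point is ensuring that the hypothesis, which is phrased only for maps $h\colon D\to C$, is strong enough for well-definedness in the reverse direction, even though the identity $\boldsymbol t=(\boldsymbol t+d)_{\sigma_D^c}$ a priori produces constraints on the cycle length of every $\boldsymbol t(b,k)$, not just $\boldsymbol t(b,0)$. The trick is to iterate the identity exactly $b$ times so that the indices return to $0$; this collapses the per-coordinate constraints to a single divisibility indexed by $b\in D$, which matches precisely the form of $h\colon D\to C$ in the statement.
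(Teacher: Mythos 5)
Your proposal is correct and follows essentially the same route as the paper's proof: the same witnessing tuple $\boldsymbol t_{(b,k)}=(h(b),k\cdot\delta)$ for the forward direction, and the same orbit-based construction under the group generated by $\sigma_D$ and $+1$ for the converse, with well-definedness reduced to the divisibility $a_{\boldsymbol t}\mid d$. The only cosmetic difference is that you iterate the identity $b$ times to get $h(b)\mid b\cdot d$ directly, whereas the paper iterates $b\dotdiv c$ times and then passes from $h(b)\dotdiv(b\dotdiv c)$ to $h(b)\dotdiv b$; both yield the same conclusion.
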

The proof of this lemma is very similar to that of Lemma \ref{lem:duofpcSatisfyPclc}, it just has more technical difficulties.

\begin{proof}
Assume without loss of generality that $\C$ is the finite disjoint union of cycles associated to the set $C\subset \N^+$\!.

$(\Rightarrow)$
Consider a map $h\colon D\to C$ and a polymorphism $f\colon \C^\D\to\C$ of $\C$ satisfying $\Sigma_D$. Define 
\[c \coloneqq \lcm(\{h(b)\dotdiv b\mid{b\in D}\})\]
and the tuple $\boldsymbol t\in\C^\D$ as $\boldsymbol t_{(b,k)}\coloneqq (h(b),c\cdot k)$ for $(b,k)\in\D$.
We will show $f(\boldsymbol t)=f(\boldsymbol t+c)$.
Observe that $(\shiftTuple{\boldsymbol t}{\sigma_{\! D}})_{(b,k)}=\boldsymbol t_{\sigma_{\! D}(b,k)}=\boldsymbol t_{(b,k+1)}$ for all $(b,k)\in\D$.
Let $b\in D$. We have, by the definition of $c$, that $h(b)$ divides $c\cdot b$. Hence, $c\cdot b\equiv_{h(b)}0$ and 
\[(\boldsymbol t_{\sigma_{\! D}})_{(b,b-1)}=\boldsymbol t_{(b,0)}=(h(b),0)=(h(b),c\cdot b)=(h(b),c\cdot (b-1)+c)= \boldsymbol t_{(b,b-1)}+c.\] 
Therefore $\shiftTuple{\boldsymbol t}{\sigma_{\! D}}=\boldsymbol t+c$, which just says that  $\boldsymbol t$ maps neighbouring points in $\D$ to points in $\C$ that are connected with a path of length $c$.
Furthermore, we have
\begin{align*}
f(\boldsymbol t)
=f_{\sigma_{\! D}}(\boldsymbol t)
=f(\shiftTuple{\boldsymbol t}{\sigma_{\! D}})
=f(\boldsymbol t+c).
\end{align*}

Since $f$ is a polymorphism we have that $f(\boldsymbol t) \stackrel{c}{\to}f(\boldsymbol t+c)=f(\boldsymbol t)$. Hence $f(\boldsymbol t)$ is in a cycle whose length divides $c$.

$(\Leftarrow)$
For this direction we construct a polymorphism $f\colon \C^\D \to \C$ of $\C$ satisfying $\Sigma_D$.
Let $\textbf G$ be the subgroup of the symmetric group on $\C^\D$ generated by $\sigma_{\! D}\colon \boldsymbol t \mapsto \shiftTuple{\boldsymbol t }{\sigma_{\! D}}$ and $+1\colon \boldsymbol t\mapsto (\boldsymbol t+1)$. Recall that
\begin{align*}
    (\shiftTuple{\boldsymbol t} {\sigma_{\! D}})_{(b,k)}=\boldsymbol t_{(b,k+1)},&& +1=\sigma_{\C^\D}, && ((+1)(\boldsymbol t))_{(b,k)}=\boldsymbol t_{(b,k)}+1.
\end{align*}
Hence, $\sigma_{\! D}\circ (+1)=(+1)\circ \sigma_{\! D}$, $\textbf G$ is commutative, and every element of $\textbf G$ is of the form $\sigma_{\! D}^c\circ (+1)^d$ for some $c, d \in\N$.
For every orbit of $\C^\D$ under $\textbf G$ pick a representative and denote the set of representatives by $T$. Let $\boldsymbol t\in T$ and define the map $h\colon D\to C$ such that for every $b\in D$ there is a $k\in \mathbb Z_{h(b)}$ with $\boldsymbol t_{(b,0)}=(h(b),k)$. 
By assumption there is an $a_{\boldsymbol t}\in C$ such that
\begin{equation}
a_{\boldsymbol t}\text{ divides } \lcm(\{h(b)\dotdiv b\mid {b\in D}\}).\label{equ:aDivides}
\end{equation}
Note that the orbit of $\boldsymbol t$ is a disjoint union of cycles of a fixed length $n$ and that $a_{\boldsymbol t}$ divides $n$. Define $f$ on the orbit of $\boldsymbol t$ as 
\[f\left((\sigma_{\! D}^c\circ (+1)^d)(\boldsymbol t)\right)
=f\left(\shiftTuple {(\boldsymbol t+d)}{\sigma_{\! D}^c}\right)
\coloneqq (a_{\boldsymbol t},d)\text{ for all $c,d$.}\] 
To show that $f$ is well defined on the orbit of $\boldsymbol t$ it suffices to prove that $\shiftTuple{(\boldsymbol t+d)}{\sigma_{\! D}^c}=\shiftTuple{(\boldsymbol t+\ell)}{\sigma_{\! D}^{m}}$ implies $d\equiv_{a_{\boldsymbol t}}\ell $ for all $d,c,\ell,m\in\N$. Without loss of generality we can assume that $m=\ell=0$. 
Fix some $b\in D$. We want to show that $(h(b)\dotdiv b)$ divides $d$.
Observe that $\boldsymbol t=\shiftTuple{(\boldsymbol t+d)}{\sigma_{\! D}^c}$ implies
$\boldsymbol t_{(b,k\cdot c)}=\boldsymbol t_{(b,(k+1)\cdot c)}+d$ for all $k$.
Considering that $(b\dotdiv c)\cdot c\equiv_{b}0$ we have 
\[\boldsymbol t_{(b,0)}=\boldsymbol t_{(b,c)}+d=\boldsymbol t_{(b,2\cdot c)}+2\cdot d=\dots=\boldsymbol t_{(b,0)}+(b\dotdiv c)\cdot d.\] 
Hence $(b\dotdiv c)\cdot d\equiv_{h(b)}0$ and there is some $k\in\N$ such that $d=\frac{k\cdot h(b)}{b\dotdiv c}$. Therefore $h(b)\dotdiv (b\dotdiv c)$ divides $d$ and also $h(b)\dotdiv b$ divides $d$. Since this holds for all $b\in D$ we have, by \eqref{equ:aDivides}, that $a_{\boldsymbol t}$ divides $d$ as desired.

Repeating this for every $\boldsymbol t\in T$ defines $f$ on $\C^\D$. The function $f$ is well defined since the orbits partition  $\C^\D$.
If $\boldsymbol r\stackrel{1}\to \boldsymbol s$, then $\boldsymbol s=(\boldsymbol r+1)=(+1)(\boldsymbol r)$ and $f(\boldsymbol r)\stackrel{1}\to f(\boldsymbol s)$, hence $f$ is a polymorphism of $\C$. Furthermore $f=f_{\sigma_{\! D}}$ by definition.
\end{proof}

\begin{example}
Some applications of Lemma~\ref{lem:duofcSatisfyClc}:
\begin{itemize}
    \item $\Cyc{10} \models\Sigma_{2,5}$, since $h(2)=h(5)=10$ is the only map from $\{2,5\}$ to $\{10\}$ and 10 divides $10=\lcm(h(2)\dotdiv2,h(5)\dotdiv 5)$,
    \item $\Cyc{n}\not\models\Sigma_n$ for $n>1$, witnessed by $h(n)=n$, since $n$ does not divide $1=h(n)\dotdiv n$,
    \item $\Cyc{5, 6} \not\models\Sigma_{2,5}$, witnessed by $h(2)=6$, $h(5)=5$, since neither 5 nor 6 divide $3=\lcm(h(2)\dotdiv2,h(5)\dotdiv 5)$.
\hfill$\triangle$
\end{itemize}
\end{example}

As a consequence of Lemma~\ref{lem:duofcSatisfyClc} we obtain the following lemma.
\begin{lemma}\label{lem:AdoesntSatisfyOwnLoops}
Let $\C$ be a finite disjoint union of cycles and $c\in\N^+$\!. Then
\begin{align*}
    \Pol(\C)\models\Sigma_{C\dotdiv c} &&\text{if and only if}&&1\in(C\dotdiv c).
\end{align*}
\end{lemma}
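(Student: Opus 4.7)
The plan is to apply Lemma~\ref{lem:duofcSatisfyClc} with $D = C\dotdiv c$ and to control the quantity $h(b)\dotdiv b$ using the identity $a\dotdiv (a\dotdiv c) = \gcd(a,c)$ from Lemma~\ref{lem:dotdiv}(1).

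For the direction $1\in C\dotdiv c \Rightarrow \Pol(\C)\models \Sigma_{C\dotdiv c}$, I would simply observe that if $1$ lies in $C\dotdiv c$, then the digraph $\mathbb G_{\Sigma_{C\dotdiv c}} = \Cyc{C\dotdiv c}$ has a loop (the cycle of length $1$). As noted in the paragraph following Definition~\ref{def:SigmaG}, in this case $\Sigma_{C\dotdiv c}$ is trivial, hence is satisfied by every clone, and in particular by $\Pol(\C)$.

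For the converse I would argue the contrapositive: assume $1\notin C\dotdiv c$, equivalently (by Lemma~\ref{lem:dotdiv}(4)) that no element of $C$ divides $c$. The task is to exhibit a map $h\colon (C\dotdiv c)\to C$ witnessing the failure clause in Lemma~\ref{lem:duofcSatisfyClc}. For each $b\in C\dotdiv c$ choose some $a_b\in C$ with $a_b\dotdiv c = b$ and set $h(b) := a_b$. By Lemma~\ref{lem:dotdiv}(1),
\[
h(b)\dotdiv b \;=\; a_b\dotdiv (a_b\dotdiv c) \;=\; \gcd(a_b,c),
\]
so each $h(b)\dotdiv b$ divides $c$. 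Consequently $L := \lcm(\{h(b)\dotdiv b\mid b\in C\dotdiv c\})$ also divides $c$. Since no $a\in C$ divides $c$, no $a\in C$ divides $L$, and Lemma~\ref{lem:duofcSatisfyClc} gives $\Pol(\C)\not\models \Sigma_{C\dotdiv c}$.

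There is no real obstacle here; the only point that deserves attention is that each $b\in C\dotdiv c$ is by definition of the form $a\dotdiv c$ for some $a\in C$, so the choice of $a_b$ used to define $h$ is legitimate. Beyond that, the argument is a direct combination of Lemma~\ref{lem:duofcSatisfyClc} with the basic arithmetic of $\dotdiv$.
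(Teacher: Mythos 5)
Your proof is correct and follows essentially the same route as the paper: the trivial direction via the loop in $\mathbb G_{\Sigma_{C\dotdiv c}}$, and the other direction via Lemma~\ref{lem:duofcSatisfyClc} applied to a map $h$ with $h(b)\dotdiv c=b$, using $a\dotdiv(a\dotdiv c)=\gcd(a,c)$ to see that the relevant lcm divides $c$. The only difference is that you phrase this direction as a contrapositive while the paper argues it directly, which is immaterial.
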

\begin{proof}
The $\Leftarrow$ direction is clear. If $1\in(C\dotdiv c)$, then $\Sigma_{C\dotdiv c}$ is trivial since it is satisfied by the projection $\pi\colon \C^{\C\dotdiv c}\to \C$, $\boldsymbol{t}\mapsto \boldsymbol{t}_{(1,0)}$.

For the $\Rightarrow$ direction  
let $h\colon (C\dotdiv c)\to C$ be some map with $h(b)\dotdiv c=b$ for every $b\in C\dotdiv c$. For instance, $h(b)=\min\{a\in C\mid a\dotdiv c=b \}$.
Note that $h(d\dotdiv c)=d$ for all $d\in\operatorname{Im}(h)$. We apply Lemma~\ref{lem:duofcSatisfyClc} to $h$ and obtain some $\Cyc{a}\hookrightarrow \C$ such that $a$ divides 
\begin{align*}
\lcm(\{h(b)\dotdiv b \mid {b\in (C\dotdiv c)}\})&=
\lcm(\{d\dotdiv(d\dotdiv c)\mid{d\in \operatorname{Im}(h)}\})\\
&=\lcm(\{\gcd(d,c)\mid {d \in  \operatorname{Im}(h)}\})    
\end{align*}
which divides $c$. Therefore, $(a\dotdiv c)=1\in (C\dotdiv c)$.
\end{proof}

Note that we did not use any of the results from Section~\ref{sec:conditions} to prove Lemmata~\ref{lem:duofcSatisfyClc} and~\ref{lem:AdoesntSatisfyOwnLoops}.
Using these lemmata we can prove the missing implication from Theorem~\ref{thm:implicationSingleClc}. 

\begin{lemma}\label{lem:missingImplicationSingleClc}
Let $C,D\subset\N^+\!$ finite. We have that (2) implies (3).
\begin{enumerate}[label=(\arabic*)]
\setcounter{enumi}{1}
\item For all $c\in \N^+\!$ we have $\Pol(\D\dotdiv c)\models\Sigma_C$ implies $\Pol(\D\dotdiv c)\models\Sigma_D$.
\item For every $a\in C$ there exist $b\in D$ and $k\in\N$ such that $b$ divides $a^k$.
\end{enumerate}
\end{lemma}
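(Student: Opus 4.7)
My plan is to argue the contrapositive of (2)~$\Rightarrow$~(3). Suppose (3) fails, so there is some $a \in C$ such that for every $b \in D$ and every $k \in \mathbb N$, $b$ does not divide $a^k$. Writing $P_n$ for the set of prime divisors of $n$, this says that every $b \in D$ has a prime factor outside $P_a$. To contradict~(2) I must produce a single $c \in \mathbb N^+$ such that $\Pol(\D \dotdiv c) \models \Sigma_C$ while $\Pol(\D \dotdiv c) \not\models \Sigma_D$.

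The natural choice is $c \coloneqq a^M$ with $M$ large enough that $\gcd(b, a^M)$ equals the full $P_a$-part of $b$ for every $b \in D$. With this choice, for each $b \in D$ the value $b \dotdiv c$ equals the $P_a$-free part of $b$, which I denote $b^a$. By the contrapositive hypothesis, $b^a > 1$ for every $b \in D$, so in particular $1 \notin D \dotdiv c$.

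Both required statements then fall out of Lemma~\ref{lem:duofcSatisfyClc}. For $\Pol(\D \dotdiv c) \models \Sigma_C$: given any $h \colon C \to D \dotdiv c$, the value $h(a)$ is $P_a$-free, so $\gcd(h(a), a) = 1$ and hence $h(a) \dotdiv a = h(a)$. Thus $h(a) \in D \dotdiv c$ itself divides $\lcm(\{h(a') \dotdiv a' \mid a' \in C\})$, which is the witness demanded by Lemma~\ref{lem:duofcSatisfyClc}. For $\Pol(\D \dotdiv c) \not\models \Sigma_D$: define $h \colon D \to D \dotdiv c$ by $h(b) \coloneqq b^a$. Since $b^a$ divides $b$ we get $h(b) \dotdiv b = 1$, so $\lcm(\{h(b) \dotdiv b \mid b \in D\}) = 1$, and no element of $D \dotdiv c$ (all strictly greater than $1$) can divide $1$.

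The only real subtlety is finding the right $c$: it must simultaneously strip every element of $D$ down to something greater than $1$ (to obstruct $\Sigma_D$) yet leave enough structure to satisfy $\Sigma_C$. Taking $c$ to be a sufficiently large power of $a$ achieves both at once, because the identity $h(a) \dotdiv a = h(a)$ for $P_a$-free $h(a)$ matches exactly the form of every element of $D \dotdiv c$.
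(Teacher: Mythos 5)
Your proposal is correct and follows essentially the same route as the paper: argue the contrapositive, set $c$ equal to a sufficiently large power of $a$ so that $1\notin(D\dotdiv c)$ (which kills $\Sigma_D$, where you invoke Lemma~\ref{lem:duofcSatisfyClc} directly while the paper cites Lemma~\ref{lem:AdoesntSatisfyOwnLoops}, an immaterial difference), and then use the identity $h(a)\dotdiv a=h(a)$ together with Lemma~\ref{lem:duofcSatisfyClc} to get $\Pol(\D\dotdiv c)\models\Sigma_C$.
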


\begin{proof}
We show the contraposition. Let $a\in C$ be such that no $b\in D$ divides $a^k$ for any $k\in\N$. Define $c\coloneqq a^k$ where $k$ is the highest prime power of any number in $D$, i.e., $k$ is the largest $\ell\in\N$ for which there is a prime $p$ and an $b\in D$ such that $p^{\ell}$ divides $b$. Consider the structure $\D\dotdiv c$. By construction $1\notin (D\dotdiv c)$. Hence, by Lemma~\ref{lem:AdoesntSatisfyOwnLoops}, $\Pol(\D\dotdiv c)\not\models\Sigma_D$.

Using Lemma~\ref{lem:duofcSatisfyClc} we prove  $\Pol(\D\dotdiv c)\models\Sigma_C$. Let $h\colon C\to D\dotdiv c$ be a map. Choose any $b\in D$ such that $h(a)=b\dotdiv c$. Then, by construction of $c$, 
\[h(a)=b\dotdiv c=b\dotdiv a^k=b\dotdiv a^k\dotdiv a=h(a)\dotdiv a\]
and $h(a)$ divides $\lcm(\{h(\tilde a)\dotdiv \tilde a\mid{\tilde a\in C}\})$. Hence, by Lemma~\ref{lem:duofcSatisfyClc}, $\Pol(\D\dotdiv c)\models\Sigma_C$ and $\Pol(\D\dotdiv c)\not\models\Sigma_D$.
\end{proof}

\subsection{On the pp-constructability of  unions of cycles}\label{ssc:ppOrder}

Now we have all the necessary ingredients to prove the connection between cyclic loop conditions and pp-constructions in $\SDPoset$ stated in Lemma~\ref{thm:PPvsLoop}.
In particular,  we
show that cyclic loop conditions suffice to separate disjoint unions of cycles.
We suggest to look at the following concrete pp-constructions first.
Recall that for every $k\in\N^+$ we abbreviate the pp-formula
\[\exists y_1,\dots,y_{k-1}( E(x,y_1)\wedge E(y_1,y_2)\wedge\dots\wedge E(y_{k-1},z))\]
by $x\stackrel{k}\to y$.
\begin{example}\label{ex.components}
The digraph $\Cyc{2,3}$ pp-constructs $\Cyc6$. Consider the second pp-power of $\Cyc{2,3}$ given by the pp-formula:
\[\Phi_E(x_1,x_2,y_1,y_2)\coloneqq (x_1\stackrel{1}{\to}y_1) \wedge (x_2\stackrel{1}{\to}y_2) \wedge (x_1\stackrel{2}{\to}x_1) \wedge (x_2\stackrel{3}{\to}x_2).\]
The resulting structure, which consists of one copy of $\Cyc6$ and 19 isolated points, is homomorphically equivalent to $\Cyc6$, and therefore $\Cyc{2,3}\leq\Cyc6$.
\eoe
\end{example}

\begin{example}\label{ex.jakubconstruction}
The digraph $\Cyc3$ pp-constructs $\Cyc9$. Consider the third pp-power of $\Cyc3$ given by the formula:
\begin{align*}
    \Phi_E(x_1,x_2,x_3,y_1,y_2,y_3)\coloneqq (x_2\approx y_1) \wedge 
    (x_3\approx y_2)\wedge (x_1\stackrel{1}{\to}y_3).
    \tag{$\ast$}
\end{align*}
Let us denote the resulting structure by $\C$. There is an edge $\boldsymbol s\stackrel{1}{\to}\boldsymbol t$ in $\C$ if the tuple $\boldsymbol t$ is obtained from $\boldsymbol s$ by first increasing the first entry and then shifting all entries cyclically, see Figure~\ref{fig:incShift}. 
\begin{figure}
    \centering
    \begin{tikzpicture}
    \node at (-0.7,0) {$\boldsymbol s=$};
    \node at (0,0) {$\begin{pmatrix}
    a\\b\\c
    \end{pmatrix}$};
    \node at (3,0) {$\begin{pmatrix}
    a+1\\b\\c
    \end{pmatrix}$};
    \node at (6,0) {$\begin{pmatrix}
    b\\c\\a+1
    \end{pmatrix}$};
    \node at (7,0) {$=\boldsymbol t$};
    
    \path[->] 
    (0.5,0) edge node[above] {increase} (2.3,0)
    (3.7,0) edge node[above] {shift} (5.3,0);
    \end{tikzpicture}
    \caption{The shape of tuples $\boldsymbol s$ and $\boldsymbol t$ in the edge-relation defined by the pp-formula $(\ast)$.
    }
    \label{fig:incShift}
\end{figure}
With this it is clear that for every element $\boldsymbol t$ in $\C$ we have $\boldsymbol t\stackrel{9}{\to}\boldsymbol t$. 
It turns out that $\C$ consists of three copies of $\Cyc9$, hence $\Cyc3\leq\Cyc9$ and even $\Cyc3\equiv\Cyc9$.

Note that the third pp-power of $\Cyc2$ given by the formula $(\ast)$ is not homomorphically equivalent to $\Cyc6$; instead, it is isomorphic to $\Cyc{2,6}$, which is homomorphically equivalent to $\Cyc2$.
\eoe
\end{example}

Although it is neither clear nor necessary we would like to mention that the pp-construction in the proof of the following lemma is essentially just a combination of the three constructions we saw in the Examples~\ref{ex.division},~\ref{ex.components} and~\ref{ex.jakubconstruction}. 

\begin{lemma}\label{thm:PPvsLoop}
Let $\C$ be a finite disjoint union of cycles and let $\B$ be a finite structure with finite relational signature $\tau$. Then 
\begin{align*}
\B\leq \C && \text{iff} && \Pol(\B)\models\Sigma_{C\dotdiv c} \text{ implies } \Pol(\C)\models\Sigma_{C\dotdiv c}\text{ for all $c$ that divide $\lcm(C)$.}
\end{align*}

%\begin{align*}
%    \B\leq\C&&\text{if and only if}&&
%\begin{tabular}{l}
         %$\Pol(\B)\models\Sigma_{C\dotdiv c}$  implies  %$\Pol(\C)\models\Sigma_{C\dotdiv c}$\\
%         for all $c$ that divide $\lcm(C)$.
%\end{tabular}
%\end{align*}

\end{lemma}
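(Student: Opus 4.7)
The forward direction is essentially immediate. Every cyclic loop condition $\Sigma_{C \dotdiv c}$ is a height 1 identity, so Corollary~\ref{cor:wond} gives that if $\B \leq \C$, then $\Pol(\B) \models \Sigma_{C \dotdiv c}$ implies $\Pol(\C) \models \Sigma_{C \dotdiv c}$. The restriction ``$c$ divides $\lcm(C)$'' costs nothing: for arbitrary $c$ one has $C \dotdiv c = C \dotdiv \gcd(c, \lcm(C))$ by Lemma~\ref{lem:dotdiv}, so the stated family of conditions already covers every cyclic loop condition of the form $\Sigma_{C \dotdiv c}$.

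For the backward direction I would exhibit, explicitly, a pp-power $\B^\star$ of $\B$ that is homomorphically equivalent to $\C$; by definition this yields $\B \leq \C$. Following the paper's explicit hint, the pp-formula defining the edge relation of $\B^\star$ is obtained by combining the three constructions from Examples~\ref{ex.division}, \ref{ex.components}, \ref{ex.jakubconstruction}: the $c$-th relational power, via Lemma~\ref{lem:relPowEqualsCdotdivc}, realises $\C \dotdiv c$ as a pp-definable quotient; the component-selection trick with loop constraints $x_i \stackrel{a_i}{\to} x_i$ isolates single cycles of prescribed lengths coordinate-wise; and the ``shift-and-increase'' formula assembles the chosen coordinate components into a single cycle whose length is the appropriate least common multiple. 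Concretely, taking $m = \lcm(C)$, the plan is to write a pp-formula of arity $m$ whose solution graph, component-wise, consists of directed cycles exactly of the lengths appearing in $C$.

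The real content lies in verifying that the construction works, and this has two halves. The direction $\B^\star \to \C$ is purely structural: the shape of the pp-formula forces every cycle in $\B^\star$ to have length in $C$, by essentially the same component analysis as in Example~\ref{ex.components}. The direction $\C \to \B^\star$, requiring an actual $a$-cycle in $\B^\star$ for each $a \in C$, is the main obstacle, and this is where the hypothesis enters. Via Theorem~\ref{thm:freestructure}, producing such cycles in the pp-power amounts to producing polymorphisms of $\B$ that violate prescribed cyclic loop identities in a coherent way; for every $c$ with $1 \notin C \dotdiv c$ the hypothesis together with Lemma~\ref{lem:AdoesntSatisfyOwnLoops} supplies $\Pol(\B) \not\models \Sigma_{C \dotdiv c}$, i.e.\ furnishes the needed ``non-loop'' polymorphism witnesses of $\B$. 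The hard part is the bookkeeping: pairing each $a \in C$ with the right choice of $c$, extracting the polymorphism witnesses, and arguing — in the spirit of Lemma~\ref{lem:duofcSatisfyClc}, but now for an arbitrary $\B$ rather than a disjoint union of cycles — that these witnesses assemble into an actual homomorphism $\C \to \B^\star$. I expect this step to occupy the bulk of the proof.
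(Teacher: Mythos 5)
Your forward direction is correct and matches the paper's (the observation that restricting to $c\mid\lcm(C)$ loses nothing is a nice, valid aside). The backward direction, however, has two genuine gaps.

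First, the construction you sketch does not apply to the structure $\B$ in the statement. Relational powers, loop constraints $x_i\stackrel{a_i}{\to}x_i$, and the shift-and-increase formula are all pp-formulas over the signature of a \emph{digraph}; here $\B$ is an arbitrary finite structure with an arbitrary finite signature $\tau$, so none of these formulas can even be written over $\B$. The paper's actual construction is the free structure $\F_{\Pol(\B)}(\C)$: a pp-power of exponent $\left|\B^{\C}\right|$ (not $\lcm(C)$) whose coordinates are indexed by tuples $\boldsymbol t\in\B^\C$, with edge formula $\bigwedge_{\boldsymbol t,R}\bigl(x_{\shiftTuple{\boldsymbol t}{\sigma_{\!C}}}\approx y_{\boldsymbol t}\wedge\Phi_R(x)\bigr)$, where $\Phi_R$ asserts that $x$, read as a map $\B^\C\to\B$, preserves $R^\B$. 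Its non-isolated elements are exactly the polymorphisms $f$ of $\B$, linked by $f\stackrel{1}{\to}f_{\sigma_{\!C}}$. The remark about ``combining the three examples'' is a heuristic, not a recipe that survives for general $\B$.

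Second, you have the roles of the two homomorphisms reversed, and the step you flag as the ``bulk of the proof'' rests on a misconception. In the correct argument the direction $\C\to\F$ is the free one: for each $a\in C$ the projection $\pi_{(a,0)}$ lies on a cycle of length $a$, with no hypothesis needed. The hypothesis is used \emph{positively} for $\F\to\C$: a cycle of length $c$ in $\F$ yields a polymorphism $f$ of $\B$ with $f=f_{\sigma_{\!C}^c}$, hence $\Pol(\B)\models\Sigma_{C\dotdiv c}$ by Lemma~\ref{lem:bestPolymorphismEver}, hence $\Pol(\C)\models\Sigma_{C\dotdiv c}$ by assumption, hence $1\in(C\dotdiv c)$ by Lemma~\ref{lem:AdoesntSatisfyOwnLoops}, so $c$ is a multiple of some $a\in C$. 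Your plan instead takes the contrapositive $\Pol(\B)\not\models\Sigma_{C\dotdiv c}$ and tries to ``extract polymorphism witnesses'' from it to build cycles for $\C\to\B^\star$. But non-satisfaction of a loop condition is a universally quantified negative statement about the whole clone; it furnishes no witness object from which a homomorphism $\C\to\B^\star$ could be assembled. As described, this step cannot be carried out, and it is in any case unnecessary once the construction is set up correctly.
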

We remark that the first part of the following proof is a specific instance of the proof of 
$ \F_{\Pol(\B)}(\A)\to\A$ implies  $\B\leq\A$
in Theorem~\ref{thm:freestructure}.
As the reader might not be familiar with free structures, we present a self-contained proof.
\begin{proof}
We show both directions separately.

$(\Rightarrow)$ 
Since $\Sigma_{C\dotdiv c}$ is a height $1$ condition, this direction follows 
from Corollary~\ref{cor:wond}.
$(\Leftarrow)$ Assume without loss of generality that $\C$ is the structure associated to $C$.  
Let $\F$ be the $\left|\B^{\C}\right|$-th pp-power of $\B$ defined by the formula
\begin{align*}
&\Phi_E(x,y)\coloneqq\bigwedge\left\{
x_{\scalebox{0.77}{$\shiftTuple{\boldsymbol t}{{\sigma_{\! C}}}$}}\approx 
y_{\boldsymbol t} \wedge \Phi_R(x)\ \middle|\ \boldsymbol t\in \B^\C, R\in\tau \right\}\!,
\intertext{where for every $k$-ary   $R\in\tau$ we have}
&\Phi_R(x)\coloneqq\bigwedge\left\{R(x_{\boldsymbol t_1},\dots,x_{\boldsymbol t_k})\ \middle|\ \boldsymbol t_1,\dots,\boldsymbol t_k\in\B^\C,\text{ $( t_{1u},\dots, t_{ku})\in R^\B$ for all $u\in\C$}\right\}\!.
\end{align*}
We can think of the elements of $\F$ as maps from $\B^\C$ to $\B$. 
The formula $\Phi_R(f)$ holds if and only if $f$ preserves $R^\B$. Note that $f$ preserves $R^\B$ if and only if $f_{\sigma_{\!C}}$ preserves $R^\B$. Hence $\Phi_E$ ensures that all elements of $\F$ that are not polymorphisms of $\B$ are isolated points. On the other hand polymorphisms $f$ of $\B$ that are in $\F$ have exactly one in-neighbour, namely $f_{\sigma^{-1}_{\! C}}$, and one out-neighbour, namely $f_{\sigma_{\! C}}$. 
Hence, $\F$ is homomorphically equivalent to a disjoint union of cycles, i.e., the structure $\F$ without isolated points. 
Furthermore, all cycles in $\F$ are of the form 
\[ f\stackrel{1}\to f_{\sigma_{\! C}}\stackrel{1}\to f_{\sigma^2_{\! C}}\stackrel{1}\to \dots \stackrel{1}\to f_{\sigma^k_{\! C}}=f \]
for some $k\in\N$.

We show that $\F$ and $\C$ are homomorphically equivalent by proving the following two statements:
\begin{enumerate}
\item For all $a\in\N^+\!$ we have $\Cyc{a}\hookrightarrow \C$ implies $\Cyc{a} \hookrightarrow \F$ and
\item for all $c\in\N^+\!$ we have $\Cyc{c} \hookrightarrow \F$ implies $\Cyc{c}\to \C$.
\end{enumerate}

First statement: Suppose that $\Cyc{a}\hookrightarrow \C$. Then the polymorphism $\pi_{(a,0)}\colon\B^\C\to\B$, $ \boldsymbol t\mapsto \boldsymbol t_{(a,0)}$ lies in the following cycle of length $a$ in $\F$:
\[\pi_{(a,0)}\stackrel{1}\to\pi_{(a,1)}\stackrel{1}\to\dots\stackrel{1}\to\pi_{(a,a-1)}\stackrel{1}\to\pi_{(a,0)}.\]

Second statement: Suppose that $\Cyc{c}\hookrightarrow \F$ and let $f$ be a polymorphism in a cycle of length $c$ in $\F$. Then $f=f_{\sigma^c_{\! C}}$. Let $\D$ be the $c$-th relational power of $\C$. Note that $\sigma_\D=\sigma^c_{\! C}$. Hence,  $f\models \Sigma_\D$. By Lemma~\ref{lem:relPowEqualsCdotdivc}, the digraphs $\D$ and $\C\dotdiv c$ are homomorphically equivalent. Therefore, $\Pol(\B)\models\Sigma_{C\dotdiv c}$ and, by assumption, $\Pol(\C)\models\Sigma_{C\dotdiv c}$ as well. Applying Lemma~\ref{lem:AdoesntSatisfyOwnLoops} we conclude that $1\in(C\dotdiv c)$. Hence, there is some $a\in C$ such that $a$ divides $c$ and $\Cyc{c}\to \Cyc{a}\hookrightarrow \C$.

It follows that $\F$ and $\C$ are homomorphically equivalent. Hence, $\C$ is pp-constructable from $\B$.
\end{proof}

The construction in the proof was discovered by Jakub Opr\v{s}al (see~\cite{jakubPCSP} or~\cite{oprsal18} for more details). We thank him for explaining it to us.
Note that, following the notation introduced in Definition~\ref{def:freestructure}, the structure $\F$, after removing all isolated points, is $\F_{\Pol(\B)}(\C)$.

\begin{example}\label{ex.6,20,15}
Suppose we want to test whether a structure $\B$ pp-constructs $\Cyc{6,20,15}$.
By Lemma~\ref{lem:AdoesntSatisfyOwnLoops}, $\Pol(\Cyc{6,20,15})$ does not satisfy any non-trivial loop conditions of the form $\Sigma_{\{6,20,15\}\dotdiv c}$. Hence, by Lemma~\ref{thm:PPvsLoop}, to verify that $\B\leq \Cyc{6,20,15}$ we only have to check whether $\B$ satisfies none of the cyclic loop conditions $\Sigma_{6,20,15}$, $\Sigma_{2,20,5}$, $\Sigma_{3,10,15}$, $\Sigma_{6,4,3}$, $\Sigma_{3,5,15}$, $\Sigma_{3,2,3}$, which are the non-trivial ones of the form $\Sigma_{\{6,20,15\}\dotdiv c}$. By Theorem \ref{thm:implicationSingleClc}, these conditions are equivalent to  $\Sigma_{6,10,15}$, $\Sigma_{2,5}$, $\Sigma_{3,10}$, $\Sigma_{2,3}$, $\Sigma_{3,5}$, and $\Sigma_{2,3}$, respectively.

We show that the disjoint union of cycles $\Cyc{2,3,5}$ can pp-construct $\Cyc{6,20,15}$. First we check that $\Cyc{2,3,5}\not\models \Sigma_{6,10,15}$. Consider the map $h(6)=2$, $h(10)=2$, $h(15)=3$. We have
\[\lcm(2\dotdiv 6, 2\dotdiv 10, 3\dotdiv 15)=\lcm(1,1,1)=1.\]
Clearly, neither $2$ nor $3$ nor $5$ divide 1. Hence, by Lemma~\ref{lem:duofcSatisfyClc}, $\Cyc{2,3,5}\not\models \Sigma_{6,10,15}$. Similarly, $\Cyc{2,3,5}$ does not satisfy the other four loop conditions. Therefore, $\Cyc{2,3,5}\leq\Cyc{6,20,15}$. 
On the other hand, the structure $\Cyc{2,15}$ cannot pp-construct $\Cyc{6,20,15}$ since it satisfies  $\Sigma_{3,5}$. 
\eoe
\end{example}

By  Theorem~\ref{thm:clcAreSetsOfPclc}  every cyclic loop condition is equivalent to a set of prime cyclic loop conditions; we thus obtain that even prime cyclic loop conditions suffice to separate disjoint unions of cycles.

\begin{corollary}\label{thm:classificationPPCon}
Let $\mathbb C$ be a finite disjoint union of cycles and $\B$ be a finite structure with finite relational signature. Then
\begin{align*}
    \B\leq\mathbb C && \text{iff} && \Pol(\B)\models\Sigma\text{ implies }\Pol(\C)\models\Sigma\text{ for all prime cyclic loop conditions $\Sigma$.}
\end{align*}

%\begin{align*}
%    \B\leq\C&&\text{if and only if}&&
%\begin{tabular}{l}
%         $\Pol(\B)\models\Sigma$ implies $\Pol(\C)\models\Sigma$\\
%         for all prime cyclic loop conditions $\Sigma$.
%\end{tabular}
%\end{align*}

\end{corollary}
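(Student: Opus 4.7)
The plan is to derive this corollary by combining Lemma~\ref{thm:PPvsLoop} with Theorem~\ref{thm:clcAreSetsOfPclc}, so that the finite family of cyclic loop conditions appearing in Lemma~\ref{thm:PPvsLoop} can be traded for the family of prime cyclic loop conditions that are equivalent to them.

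For the forward direction, I would simply invoke Corollary~\ref{cor:wond}: a prime cyclic loop condition $\Sigma_P$ is a (single) height 1 condition, so if $\B \leq \C$ then $\Pol(\B)\models\Sigma_P$ forces $\Pol(\C)\models\Sigma_P$. This direction needs no further work.

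For the backward direction, I assume that for every prime cyclic loop condition $\Sigma$, $\Pol(\B)\models\Sigma$ implies $\Pol(\C)\models\Sigma$. By Lemma~\ref{thm:PPvsLoop}, to conclude $\B\leq\C$ it suffices to verify that $\Pol(\B)\models\Sigma_{C\dotdiv c}$ implies $\Pol(\C)\models\Sigma_{C\dotdiv c}$ for every $c$ dividing $\lcm(C)$. Fix such a $c$ and apply Theorem~\ref{thm:clcAreSetsOfPclc} to obtain a finite set $\{\Sigma_{P_1},\dots,\Sigma_{P_n}\}$ of prime cyclic loop conditions equivalent to $\Sigma_{C\dotdiv c}$. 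Now if $\Pol(\B)\models\Sigma_{C\dotdiv c}$, then $\Pol(\B)\models\Sigma_{P_i}$ for every $i\in[n]$; by hypothesis we then get $\Pol(\C)\models\Sigma_{P_i}$ for every $i$, and the equivalence from Theorem~\ref{thm:clcAreSetsOfPclc} yields $\Pol(\C)\models\Sigma_{C\dotdiv c}$, as required.

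There is no real obstacle here since all the heavy lifting has already been done: Lemma~\ref{thm:PPvsLoop} reduces separating $\B$ from $\C$ to a finite family of cyclic loop conditions parametrised by divisors of $\lcm(C)$, and Theorem~\ref{thm:clcAreSetsOfPclc} guarantees that each of these is provably equivalent (for clones) to a finite set of prime cyclic loop conditions. The only thing to be careful about is to note that Theorem~\ref{thm:clcAreSetsOfPclc}'s equivalence is the clone-theoretic $\Leftrightarrow$ from Definition~\ref{def:orderOnConditions}, which is exactly what is needed to transfer the satisfaction relation between $\Pol(\B)$ and $\Pol(\C)$.
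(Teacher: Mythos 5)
Your proposal is correct and follows essentially the same route as the paper, which derives the corollary from Lemma~\ref{thm:PPvsLoop} together with the decomposition of each $\Sigma_{C\dotdiv c}$ into an equivalent finite set of prime cyclic loop conditions via Theorem~\ref{thm:clcAreSetsOfPclc}. The paper leaves these details implicit (and merely remarks on an alternative argument via Lemma~\ref{lem:minimalIsPrime}), so your write-up simply spells out the intended proof.
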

Note that we can also prove this corollary using easier results. The cyclic loop conditions that are considered in Lemma~\ref{thm:classificationPPCon} to test whether $\B$ pp-constructs $\C$ have the form $\Sigma_{C\dotdiv c}$. To verify the condition in the lemma it suffices to check whether $\B$ does not satisfy any condition that is minimal in $\{\Sigma_{C\dotdiv c}\mid c\in\N^+\!, 1\notin(C\dotdiv c)\}$. 
Any such minimal condition $\Sigma_{C\dotdiv c}$ has a $c$ that is maximal for $C$ and is, by Lemma~\ref{lem:minimalIsPrime}, equivalent to a prime cyclic loop condition.

\subsection{Characterizing unions of cycles in terms of prime cyclic loop conditions}\label{ssc:SDPoset}
As a consequence of Corollary~\ref{thm:classificationPPCon}, every element $[\C]$ of $\SDPoset$ is uniquely determined by the set of prime cyclic loop conditions that $\C$ satisfies. Hence the map  
\[\PL \colon \C \mapsto \{\Sigma\mid \text{$\Sigma$ a prime cyclic loop condition and } \Pol(\C)\models\Sigma \}\] 
is injective. 
The next goal is to determine the image of $\PL$. 
First we simplify the characterization from Lemma~\ref{lem:duofcSatisfyClc} for prime cyclic loop conditions.

\begin{lemma}\label{lem:duofcSatisfyPclc}
Let $\C$ be a finite disjoint union of cycles and let $\Sigma_P$ be a prime cyclic loop condition. Then the following are equivalent:
\begin{enumerate}[label=(\arabic*)]
    \item $\Pol(\C)\not\models\Sigma_P$.
    \item There is a $c\in\N^+\!$ such that $1\notin (C\dotdiv c)$ and $P\subseteq (C\dotdiv c)$.
    \item There is a $c\in\N^+\!$ such that $\Sigma_{C\dotdiv c}$ is non-trivial and $\Sigma_P\Rightarrow\Sigma_{C\dotdiv c}$.
\end{enumerate} 
\end{lemma}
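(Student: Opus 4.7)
The plan is to close the circular chain $(1) \Rightarrow (2) \Rightarrow (3) \Rightarrow (1)$, using Lemma \ref{lem:duofcSatisfyClc} as the bridge between $(1)$ and arithmetic statements about $\dotdiv$, Lemma \ref{lem:AdoesntSatisfyOwnLoops} to connect non-triviality of $\Sigma_{C\dotdiv c}$ to $1 \notin C\dotdiv c$, and Lemma \ref{lem:orderOnLoopcond}(2) for the easy monotonicity step.

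For $(1) \Rightarrow (2)$, I apply the contrapositive of Lemma \ref{lem:duofcSatisfyClc}: there is a map $h\colon P \to C$ such that no $a \in C$ divides $c \coloneqq \lcm\{h(p) \dotdiv p \mid p \in P\}$. By Lemma \ref{lem:dotdiv}(4) this is exactly $1 \notin C \dotdiv c$. To show $P \subseteq C \dotdiv c$, I fix $p \in P$ and exploit that $p$ is prime. If $p \nmid h(p)$, then $h(p) \dotdiv p = h(p)$ divides $c$, giving $1 \in C \dotdiv c$, contrary to the choice of $h$. So $p \mid h(p)$; then $h(p)/p = h(p) \dotdiv p$ divides $c$, while $h(p)$ itself does not. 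Writing $g \coloneqq \gcd(h(p), c)$, we have $h(p)/p \mid g \mid h(p)$, and since $p$ is prime, $g$ equals either $h(p)/p$ or $h(p)$; the latter would give $h(p) \mid c$, excluded, so $g = h(p)/p$ and $h(p) \dotdiv c = p$. Hence $p \in C \dotdiv c$.

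For $(2) \Rightarrow (3)$, the hypothesis $1 \notin C \dotdiv c$ makes the graph $\mathbb G_{\Sigma_{C \dotdiv c}}$ loopless, so $\Sigma_{C \dotdiv c}$ is non-trivial. Iterating Lemma \ref{lem:orderOnLoopcond}(2) along the inclusion $P \subseteq C \dotdiv c$ yields $\Sigma_P \Rightarrow \Sigma_{C \dotdiv c}$. For $(3) \Rightarrow (1)$, non-triviality of $\Sigma_{C \dotdiv c}$ forces $1 \notin C \dotdiv c$, and Lemma \ref{lem:AdoesntSatisfyOwnLoops} then gives $\Pol(\C) \not\models \Sigma_{C \dotdiv c}$; combined with $\Sigma_P \Rightarrow \Sigma_{C \dotdiv c}$ this yields $\Pol(\C) \not\models \Sigma_P$.

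The main obstacle is the lifting step inside $(1) \Rightarrow (2)$: from the mere divisibility $h(p) \dotdiv p \mid c$ one needs to upgrade to the equality $h(p) \dotdiv c = p$. Primality of $p$ is essential here, since it is exactly what restricts $\gcd(h(p), c)$ to only two candidate values and lets the dichotomy "$h(p) \mid c$ or $h(p) \dotdiv c = p$" collapse to the desired alternative.
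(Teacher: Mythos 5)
Your proof is correct and follows essentially the same route as the paper: the cycle $(1)\Rightarrow(2)\Rightarrow(3)\Rightarrow(1)$, with Lemma~\ref{lem:duofcSatisfyClc} supplying the witness $h$ and $c=\lcm\{h(p)\dotdiv p\mid p\in P\}$, Lemma~\ref{lem:orderOnLoopcond} for $(2)\Rightarrow(3)$, and Lemma~\ref{lem:AdoesntSatisfyOwnLoops} for $(3)\Rightarrow(1)$. The only (cosmetic) difference is in deducing $h(p)\dotdiv c=p$: you run an explicit $\gcd$ dichotomy using primality of $p$, whereas the paper notes that $h(p)\dotdiv c$ divides $h(p)\dotdiv(h(p)\dotdiv p)=\gcd(h(p),p)=p$ via Lemma~\ref{lem:dotdiv}(1) and is not $1$.
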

\begin{proof}
(1) $\Rightarrow$ (2) Since $\Pol(\C)\not\models\Sigma_P$, by Lemma~\ref{lem:duofcSatisfyClc}, there is a map $h\colon P\to C$ such that no $a\in C$ divides $c$, where $c\coloneqq \lcm(\{h(p)\dotdiv p\mid {p\in P}\})$.
In particular, every $p$ divides $h(p)$.
Note that if $a\dotdiv c=1$, then $\gcd(a,c)=a$ and  $a$ divides $c$.
Hence, we have $1\notin(C\dotdiv c)$. 
Let $p\in P$. Then $h(p)$ does not divide $c$, hence $h(p)\dotdiv c\neq1$. Furthermore, $h(p)\dotdiv c$ divides $h(p)\dotdiv (h(p)\dotdiv p)=\gcd(h(p),p)=p$.
Therefore,  $h(p)\dotdiv c=p$ for all $p$ and $P\subseteq (C\dotdiv c)$.

The directions
(2) $\Rightarrow$ (3) and (3) $\Rightarrow$ (1) follow from Lemma~\ref{lem:orderOnLoopcond} and  Lemma~\ref{lem:AdoesntSatisfyOwnLoops}, respectively.
\end{proof}

\begin{lemma}\label{lem:surjectivetyUpperBound}
For every disjoint union of cycles $\C$ we have that $\PL(\C)$ is a cofinite downset of $\mPCL$.
\end{lemma}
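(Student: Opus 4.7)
The plan is to verify the two defining conditions separately: that $\PL(\C)$ is a downset of $\mPCL$, and that its complement $\mPCL \setminus \PL(\C)$ is finite.

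For the downset property I would just unwind definitions. The order on $\mPCL$ is $\Leftarrow$, so $\Sigma_Q$ lies below $\Sigma_P$ in $\mPCL$ precisely when $\Sigma_P \Rightarrow \Sigma_Q$. If $\Sigma_P \in \PL(\C)$, i.e.\ $\Pol(\C) \models \Sigma_P$, and $\Sigma_P \Rightarrow \Sigma_Q$, then by the very definition of implication $\Pol(\C) \models \Sigma_Q$ as well, so $\Sigma_Q \in \PL(\C)$.

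For cofiniteness I would apply Lemma~\ref{lem:duofcSatisfyPclc}, which says $\Sigma_P \notin \PL(\C)$ iff there exists $c \in \N^+\!$ with $1 \notin (C \dotdiv c)$ and $P \subseteq (C \dotdiv c)$. The crucial observation is that every element of $C \dotdiv c$ divides some element of $C$: indeed $a \dotdiv c = a/\gcd(a,c)$ is a divisor of $a$. Hence if $P \subseteq C \dotdiv c$ and $P$ consists of primes, then every $p \in P$ is a prime divisor of some $a \in C$. Letting $\mathcal P$ denote the (finite) set of all primes dividing some element of $C$, we obtain $P \subseteq \mathcal P$ for every $\Sigma_P \notin \PL(\C)$. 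There are only $2^{|\mathcal P|} - 1$ nonempty subsets of $\mathcal P$, so only finitely many prime cyclic loop conditions are missing from $\PL(\C)$.

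I do not anticipate any real obstacle here; once Lemma~\ref{lem:duofcSatisfyPclc} is available, both parts are short. The only mild care is in keeping the direction of the order on $\mPCL$ straight (since the order is $\Leftarrow$, being ``below'' in $\mPCL$ corresponds to being implied, which is what makes $\PL(\C)$ downward closed rather than upward closed).
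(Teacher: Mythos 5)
Your proposal is correct and follows essentially the same route as the paper: cofiniteness is obtained from Lemma~\ref{lem:duofcSatisfyPclc} together with the observation that every element of $C \dotdiv c$ divides an element of $C$, so all primes occurring in unsatisfied conditions divide $\lcm(C)$. The paper leaves the downset property implicit (it is immediate since $\Pol(\C)$ is a clone and $\Rightarrow$ is defined via satisfaction in all clones), whereas you spell it out, which is harmless.
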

\begin{proof}
Let $P$ be the set of all prime divisors of $\lcm(C)$. Then, by Lemma~\ref{lem:duofcSatisfyPclc}, every prime cyclic loop condition $\Sigma_S$ with $\Pol(\C)\not\models\Sigma_S$ satisfies $S\subseteq P$. Since $P$ is finite there are only finitely many prime cyclic loop conditions that are not satisfied by $\C$. 
\end{proof}

Next we show that every cofinite downset of $\mPCL$ is also realized by some disjoint union of cycles.
\begin{lemma}\label{lem:surjectivety}
Let $\Gamma$ be a cofinite downset of $\mPCL$ and $\Downset_{\min}$ the set of minimal prime cyclic loop conditions of $\mPCL\setminus\Downset$. Then 
\[\C\coloneqq\bigtimes_{\Sigma\in\Downset_{\min}} \mathbb G_\Sigma\]
is a finite disjoint union of cycles and $\PL(\C)=\Gamma$.
\end{lemma}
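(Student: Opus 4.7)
The plan is first to verify that $\C$ is a finite disjoint union of cycles, and then to establish the two inclusions $\PL(\C) \subseteq \Gamma$ and $\Gamma \subseteq \PL(\C)$ separately, relying primarily on Lemma~\ref{lem:duofcSatisfyPclc} and the isomorphism from Corollary~\ref{cor:mPclEqPPC}.

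For the structural setup, write $\Gamma_{\min} = \{\Sigma_{Q_1},\dots,\Sigma_{Q_k}\}$; this set is finite because the complement of $\Gamma$ in $\mPCL$ is finite. Each $\mathbb G_{\Sigma_{Q_i}}$ is the disjoint union of prime cycles associated to $Q_i$, and since the product of two disjoint unions of cycles is again a disjoint union of cycles with associated set $\{\lcm(a,b)\mid a\in C_1, b\in C_2\}$, induction shows that $\C$ is a disjoint union of cycles associated to
\[ C = \{\lcm(q_1,\dots,q_k)\mid q_j\in Q_j\text{ for each }j\}; \]
in particular every $a\in C$ is squarefree. Under the isomorphism $\mPCL \simeq (\{\text{finite nonempty sets of primes}\},\supseteq)$ from Corollary~\ref{cor:mPclEqPPC}, a downset of $\mPCL$ corresponds to a family of prime sets closed under supersets, so the complement is closed under taking nonempty subsets, and $\Gamma_{\min}$ corresponds to the inclusion-maximal prime sets in the complement. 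In particular, $Q_1,\dots,Q_k$ form an antichain under $\subseteq$.

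For $\PL(\C) \subseteq \Gamma$, let $\Sigma_P\notin\Gamma$ and pick any inclusion-maximal superset $Q_i$ of $P$ inside the complement, so that $\Sigma_{Q_i}\in\Gamma_{\min}$ and $P\subseteq Q_i$. Define $c\coloneqq\prod_{q\in(\bigcup_j Q_j)\setminus Q_i} q$. Since every $a=\lcm(q_1,\dots,q_k)\in C$ is squarefree with prime factors $\{q_1,\dots,q_k\}$, the quantity $a\dotdiv c$ equals the product of those prime factors of $a$ that lie in $Q_i$; this always contains $q_i$, so $a\dotdiv c\geq q_i\geq 2$ and hence $1\notin C\dotdiv c$. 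For each $p\in P\subseteq Q_i$, choosing $q_i=p$ and $q_j\in Q_j\setminus Q_i$ for $j\neq i$ (possible by the antichain property) yields an $a\in C$ with $a\dotdiv c=p$, so $P\subseteq C\dotdiv c$. Lemma~\ref{lem:duofcSatisfyPclc} then gives $\Pol(\C)\not\models\Sigma_P$, as desired.

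For the reverse inclusion $\Gamma\subseteq\PL(\C)$ I will argue by contrapositive. Assume $\Sigma_P\in\Gamma$ and $\Pol(\C)\not\models\Sigma_P$; by Lemma~\ref{lem:duofcSatisfyPclc} there is a $c$ with $1\notin C\dotdiv c$ and $P\subseteq C\dotdiv c$. Let $P'$ denote the set of primes in $\bigcup_j Q_j$ that do not divide $c$, so that $a\dotdiv c$ equals the product of the prime factors of $a$ that lie in $P'$. The condition $1\notin C\dotdiv c$ forces some $Q_i$ to be contained in $P'$: otherwise, picking $q_j\in Q_j\setminus P'$ for each $j$ would produce an $a\in C$ with no prime factor in $P'$. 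Fix such a $Q_i$. For each $p\in P$ there is an $a=\lcm(q_1,\dots,q_k)\in C$ with $a\dotdiv c=p$; since $q_i\in Q_i\subseteq P'$, we must have $q_i=p$ and hence $p\in Q_i$. Thus $P\subseteq Q_i$, and because the complement is closed under nonempty subsets and already contains $Q_i$, it also contains $P$, contradicting $\Sigma_P\in\Gamma$. The main technical step is this combinatorial extraction that some $Q_i$ must lie entirely inside $P'$; the remaining work is careful bookkeeping with squarefree numbers and the $\dotdiv$ operation.
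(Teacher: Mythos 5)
Your proposal is correct and follows essentially the same route as the paper: both directions are reduced via Lemma~\ref{lem:duofcSatisfyPclc} to the combinatorics of the sets $C\dotdiv c$, with the same explicit description of $C$ as lcm's of one prime chosen from each $Q_j$ and the same ``all chosen primes divide $c$, hence $a\dotdiv c=1$'' contradiction. If anything, phrasing the key extraction step in the reverse inclusion in terms of the set $P'$ of primes not dividing $c$ (rather than membership in $C\dotdiv c$ itself, as the paper does) makes that step slightly cleaner, but it is the same argument.
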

\begin{proof}
Let $P$ denote the set $\bigcup\{T\mid \Sigma_T\in\Downset_{\min}\}$, which contains the primes occurring in $\Downset_{\min}$. 
Since $\Downset$ is cofinite we have that $\Downset_{\min}$ is finite. Hence, $\C$ is a finite disjoint union of cycles and
\[ C= \left\{
\lcm(\{p_T\mid { \Sigma_T\in\Downset_{\min}}\})
~\middle|~ 
\text{$p_T\in T$ for every $\Sigma_T\in\Downset_{\min}$}\right\}\!.\] 
We prove that $\PL(\C)=\Downset$. 

$(\subseteq)$  
Let $\Sigma_S\in\mPCL\setminus\Downset$. Since $\PL(\C)$ is closed under implication, we can assume $\Sigma_S$ to be minimal in $\mPCL\setminus\Downset$. Define $c_S\coloneqq \prod(P\setminus S)$. 
We want to apply Lemma~\ref{lem:duofcSatisfyPclc} to show $\Pol(\C)\not\models\Sigma_S$. 
Firstly, note that, since $\Sigma_S\in\Downset_{\min}$, any $a\in C$ is a multiple of some prime $p\in S$. Furthermore, this $p$ does not divide $c_S$, hence $a\dotdiv c_S\not=1$ and $1\notin (C\dotdiv c_S)$. 
Secondly, let $p\in S$. Since $\Sigma_S$ is minimal we have that for every other $\Sigma_T\in\Downset_{\min}$ there exists a $p_T\in T\setminus S$. Define $p_S\coloneqq p$ and $a\coloneqq \lcm(\{p_T\mid { \Sigma_T\in\Downset_{\min}}\})$. Then $a\in C$ and $a\dotdiv c_S=p$. Therefore $p\in (C\dotdiv c_S)$. Hence, $S\subseteq (C\dotdiv c_S)$ and, by Lemma~\ref{lem:duofcSatisfyPclc}, $\Pol(\C)\not\models \Sigma_S$ as desired.

$(\supseteq)$ Let $\Sigma_S\in\mPCL\setminus\PL(\C)$. Since $\Pol(\C)\not\models\Sigma_S$, by Lemma~\ref{lem:duofcSatisfyPclc}, we have that $S$ is contained in a set of the form $(C\dotdiv c)$.
Next we show that there is some $\Sigma_{S_c}\in\Downset_{\min}$ such that $S_c$ is contained in $(C\dotdiv c)$. 
Assume for contradiction that for every $\Sigma_T\in\Downset_{\min}$ the set $T$ is not contained in $(C\dotdiv c)$. Let $p_T$ be a witness of this fact.  
Note that $p_T\notin (C\dotdiv c)$ implies $p_T$ divides $c$. Then $a\coloneqq \lcm(\{p_T\mid { \Sigma_T\in\Downset_{\min}}\})\in C$ but $a\dotdiv c=1$, a contradiction. Hence, there is a $\Sigma_{S_c}\in\Downset_{\min}$ such that $S_c\subseteq (C\dotdiv c)$.

We show that $S\subseteq S_c$. Let $p\in S\subseteq (C\dotdiv c)$. Then there is some $a\in C$ such that $a\dotdiv c=p$. Again $a$ is of the form $\lcm(\{p_T\mid{ T\in\Downset_{\min}}\})$. Note that, since all numbers in $C$ are square-free, no element from $S_c$ can divide $c$. Hence, $p=p_{S_c}\in S_c$.
Therefore $S\subseteq S_c$ and $\Sigma_S$ implies $\Sigma_{S_c}$. Since $\Downset$ is implication-closed and $\Sigma_{S_c}\notin\Gamma$ we conclude that $\Sigma_S\notin \Downset$. This yields $\PL(\C)=\Downset$, as desired.
\end{proof}

The following two corollaries are immediate consequences of Corollary~\ref{thm:classificationPPCon}, Lemma~\ref{lem:surjectivetyUpperBound}, and Lemma~\ref{lem:surjectivety}. 

\begin{corollary}
Let $\C$ be a finite disjoint union of cycles. Then there is  a  finite  disjoint  union  of  cycles $\D$ whose  cycle  lengths  are  square-free  such  that $[\C]=[\D]$.
\end{corollary}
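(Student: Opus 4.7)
The plan is to chase the three cited results and notice that the construction in Lemma~\ref{lem:surjectivety} automatically produces square-free cycle lengths. First I would set $\Downset \coloneqq \PL(\C)$. By Lemma~\ref{lem:surjectivetyUpperBound} this is a cofinite downset of $\mPCL$, so the hypothesis of Lemma~\ref{lem:surjectivety} is met and the set $\Downset_{\min}$ of minimal elements of $\mPCL\setminus\Downset$ is finite.

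Next I would take $\D$ to be precisely the structure built in Lemma~\ref{lem:surjectivety}, namely $\D = \bigtimes_{\Sigma\in\Downset_{\min}}\mathbb{G}_\Sigma$. Recall from the proof of that lemma that the set $D$ associated to $\D$ is
\[D = \bigl\{\lcm(\{p_T \mid \Sigma_T\in\Downset_{\min}\}) \;\big|\; p_T\in T \text{ for every }\Sigma_T\in\Downset_{\min}\bigr\}.\]
Each element of $D$ is the least common multiple of finitely many (distinct) primes, hence a product of distinct primes, hence square-free. So $\D$ is a finite disjoint union of cycles whose cycle lengths are square-free.

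Finally, Lemma~\ref{lem:surjectivety} yields $\PL(\D) = \Downset = \PL(\C)$, and Corollary~\ref{thm:classificationPPCon} (applied in both directions with $\B \in \{\C,\D\}$) then gives $\C \leq \D$ and $\D \leq \C$, i.e.\ $[\C] = [\D]$.

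I do not expect any serious obstacle: the work has already been done in Lemmata~\ref{lem:surjectivetyUpperBound} and~\ref{lem:surjectivety}, and the only new observation is the trivial combinatorial fact that an $\lcm$ of distinct primes is square-free. The one thing to be careful about is that $\Downset_{\min}$ may be empty (when $\C$ already pp-constructs $\KThree$ in the larger picture, or more simply when $\PL(\C) = \mPCL$); in that degenerate case the empty product convention gives $\D = \Cyc{1}$, which is the top element and is trivially square-free, so the statement still holds.
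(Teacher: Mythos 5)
Your proof is correct and is exactly the argument the paper intends: the corollary is stated there as an immediate consequence of Lemma~\ref{lem:surjectivetyUpperBound}, Lemma~\ref{lem:surjectivety} (whose construction yields cycle lengths that are lcms of distinct primes, hence square-free), and Corollary~\ref{thm:classificationPPCon}. The only blemish is the parenthetical aside about the degenerate case: $\PL(\C)=\mPCL$ corresponds to $\C$ containing a loop, i.e.\ the top element $[\Cyc 1]$, not to anything involving $\KThree$ (the bottom element); this slip does not affect the argument.
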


We finally obtain a classification of $\SDPoset$.
\begin{corollary}
\label{thm:classificationPoset}
The map
\begin{align*}
    \SDPoset&\to\MPCL\\
    [\C]&\mapsto[\PL(\C)]
\end{align*}
is well defined and an embedding of posets. Its image consists of the elements that can be represented by a cofinite downset of $\mPCL$. Put differently, \[SDPoset\simeq(\operatorname{CofiniteDownsetsOf}(\mPCL),\subseteq).\]
\end{corollary}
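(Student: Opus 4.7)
The plan is to assemble this corollary directly from the three main results just proved: Corollary~\ref{thm:classificationPPCon} (which characterises $\leq$ on $\SDPoset$ via prime cyclic loop conditions), Lemma~\ref{lem:surjectivetyUpperBound} (which confines the image of $\PL$ to cofinite downsets of $\mPCL$), and Lemma~\ref{lem:surjectivety} (which produces a preimage for each cofinite downset). No new technical machinery should be needed; the task is one of bookkeeping, keeping track of which order goes in which direction.

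I would first dispatch well-definedness: if $[\C]=[\D]$ in $\SDPoset$ then $\C\leq\D$ and $\D\leq\C$, whence Corollary~\ref{thm:classificationPPCon} gives $\PL(\C)\subseteq\PL(\D)$ and $\PL(\D)\subseteq\PL(\C)$, so $\PL(\C)=\PL(\D)$. For the embedding, I would use Corollary~\ref{cor:characterizationMCLAndmCL} to identify each element of $\MPCL$ with its generated downset of $\mPCL$ and note that under this identification the order $\Leftarrow$ on $\MPCL$ becomes set-inclusion on downsets (since larger downsets impose more prime cyclic loop conditions and are thus stronger). Corollary~\ref{thm:classificationPPCon} then yields the string of equivalences
\[ [\C]\leq[\D] \iff \C\leq\D \iff \PL(\C)\subseteq\PL(\D) \iff [\PL(\C)]\leq[\PL(\D)], \]
which simultaneously gives order-preservation, order-reflection, and hence injectivity.

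Finally, to pin down the image, Lemma~\ref{lem:surjectivetyUpperBound} shows that $\PL(\C)$ is always a cofinite downset of $\mPCL$, while Lemma~\ref{lem:surjectivety} produces, for every cofinite downset $\Gamma$ of $\mPCL$, an explicit finite disjoint union of cycles $\C$ with $\PL(\C)=\Gamma$. Together these equate the image of our embedding with the set of cofinite downsets of $\mPCL$, yielding the claimed isomorphism $\SDPoset\simeq(\operatorname{CofiniteDownsetsOf}(\mPCL),\subseteq)$. The only real obstacle is ensuring consistency of conventions across $\SDPoset$, $\MPCL$, and the downset lattice; since all three have been carefully set up in the preceding sections, the corollary follows essentially by combining the cited results.
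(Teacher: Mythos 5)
Your proposal is correct and follows exactly the route the paper intends: the paper itself states that this corollary is an immediate consequence of Corollary~\ref{thm:classificationPPCon}, Lemma~\ref{lem:surjectivetyUpperBound}, and Lemma~\ref{lem:surjectivety}, and your write-up merely fills in the routine bookkeeping (well-definedness, order-preservation and -reflection via $\C\leq\D\iff\PL(\C)\subseteq\PL(\D)$, and the identification of the image). The orientation of the orders is handled correctly throughout, so nothing further is needed.
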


To better understand what the map from Corollary~\ref{thm:classificationPoset} does, have a look at the illustration in Figure~\ref{fig:example23}.
\begin{figure}
    \centering
    \begin{tikzpicture}[scale=0.6]

\node (0) at (2,-2)  {$[\Cyc{2,3}]$};
\node (00) at (2,0)  {$[\Cyc6]$};
\node (11) at (0,2)  {$[\Cyc2]$};
\node (13) at (4,2) {$[\Cyc3]$};
\node (22) at (2,4) {$[\Cyc1]$};

\node at (6.2,1) {$\mapsto$};
\path 
    (0)  edge (00)
    (00) edge (11)
    (00) edge (13)
    (11) edge (22)
    (13) edge (22)
    ;
\end{tikzpicture}
\hspace{5mm}
\begin{tikzpicture}[scale=0.6]

\node (0) at (2,-2)  {$[\mPCL\setminus\left\{\Sigma_{2},\Sigma_{3},\Sigma_{2,3}\right\}]$};
\node (00) at (2,0)  {$[\mPCL\setminus\left\{\Sigma_{2},\Sigma_{3}\right\}]$};
\node (11) at (-0.2,2)  {$[\mPCL\setminus\left\{\Sigma_{2}\right\}]$};
\node (13) at (4.2,2) {$[\mPCL\setminus\left\{\Sigma_{3}\right\}]$};
\node (22) at (2,4) {$[\mPCL]$};

\path 
    (0)  edge (00)
    (00) edge (11)
    (00) edge (13)
    (11) edge (22)
    (13) edge (22)
    ;
\end{tikzpicture}
\caption{ The embedding from Corollary~\ref{thm:classificationPoset} restricted to disjoint unions of cycles of length 2 and 3.}
    \label{fig:example23}
\end{figure}
We can give an explicit description of $\PL(\C)$.

\begin{lemma}\label{lem:pclCharacterization}
Let $\C$ be a finite disjoint union of cycles. Then
\[\PL(\C)=\mPCL\setminus\operatorname{UpsetOf}(\{\Sigma_{C\dotdiv c}\mid c\text{ is maximal for $C$}\}).\]
\end{lemma}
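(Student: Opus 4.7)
By Lemma~\ref{lem:duofcSatisfyPclc}, we have $\Sigma_P \in \mPCL\setminus\PL(\C)$ if and only if there exists $c\in\N^+$ with $1\notin(C\dotdiv c)$ and $\Sigma_P\Rightarrow\Sigma_{C\dotdiv c}$. Equivalently, $\mPCL\setminus\PL(\C)$ is exactly the upset in $\mPCL$ of the set $\mathcal{N}\coloneqq\{\Sigma_{C\dotdiv c}\mid c\in\N^+,\,1\notin(C\dotdiv c)\}$. The lemma therefore reduces to showing that $\mathcal{N}$ and $\mathcal{M}\coloneqq\{\Sigma_{C\dotdiv c}\mid c\text{ maximal for }C\}$ generate the same upset in $\mPCL$. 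Since every $c$ maximal for $C$ satisfies $1\notin(C\dotdiv c)$, the inclusion $\mathcal{M}\subseteq\mathcal{N}$ is immediate, and the nontrivial direction is to show that every $\Sigma_{C\dotdiv c}\in\mathcal{N}$ is implied (in the sense of $\Rightarrow$) by some $\Sigma_{C\dotdiv c'}\in\mathcal{M}$.

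I would prove this by a climbing argument. Starting from $c_0$ with $1\notin(C\dotdiv c_0)$, whenever the current value $c$ is not maximal for $C$ there is, by Definition~\ref{def:minimal}, some $d>1$ dividing $\lcm(C\dotdiv c)$ with $1\notin(C\dotdiv(cd))$; replace $c$ by $cd$ and iterate. By Lemma~\ref{lem:orderOnLoopcond}(1) together with Lemma~\ref{lem:dotdiv}(2), each step yields the implication $\Sigma_{C\dotdiv c}\Rightarrow\Sigma_{(C\dotdiv c)\dotdiv d}=\Sigma_{C\dotdiv(cd)}$. Chaining these implications, the final value $c'$ (maximal for $C$ by construction) satisfies $\Sigma_{C\dotdiv c_0}\Rightarrow\Sigma_{C\dotdiv c'}$, so any prime cyclic loop condition $\Sigma_P$ implying $\Sigma_{C\dotdiv c_0}$ also implies the element $\Sigma_{C\dotdiv c'}\in\mathcal{M}$, as required.

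The technical heart of the argument is termination of this climbing procedure. I would establish the identity
\[\lcm(C\dotdiv(cd))=\lcm(C\dotdiv c)\dotdiv d\]
by a short $p$-adic valuation computation using $v_p(x\dotdiv d)=\max(0,v_p(x)-v_p(d))$ and $v_p(\lcm X)=\max_{x\in X}v_p(x)$. Once this identity is in hand, $d>1$ and $d\mid L\coloneqq\lcm(C\dotdiv c)$ give $\lcm(C\dotdiv(cd))=L/d<L$, so the positive integer $\lcm(C\dotdiv c)$ strictly decreases at each step and the process halts in finitely many iterations. This termination step, together with the routine verification that the halting value $c'$ is indeed maximal for $C$, is the only place where real work is done; everything else is straightforward assembly of Lemmas~\ref{lem:duofcSatisfyPclc},~\ref{lem:orderOnLoopcond}, and~\ref{lem:dotdiv}.
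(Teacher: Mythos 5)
Your proof is correct and follows essentially the same route as the paper's: both rest on Lemma~\ref{lem:duofcSatisfyPclc} (together with Lemma~\ref{lem:AdoesntSatisfyOwnLoops}) plus the observation that the witnessing $c$ can be pushed up to one that is maximal for $C$, and your climbing/termination argument via the identity $\lcm(C\dotdiv(cd))=\lcm(C\dotdiv c)\dotdiv d$ usefully spells out the step the paper dismisses with ``note that we can choose $c$ to be maximal''. One cosmetic slip: you state the reduction as ``every $\Sigma_{C\dotdiv c}\in\mathcal{N}$ is implied by some element of $\mathcal{M}$'', whereas what is needed (and what your climbing argument actually proves) is that it \emph{implies} some element of $\mathcal{M}$.
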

\begin{proof}
($\subseteq$) 
By Lemma~\ref{lem:AdoesntSatisfyOwnLoops}, we have that $\C$ does not satisfy $\Sigma_{C\dotdiv c}$ for any $c$ that is maximal for $C$. Hence no condition in  $\operatorname{UpsetOf}(\{\Sigma_{C\dotdiv c}\mid c\text{ is maximal for $C$}\})$ is satisfied by $\C$.

($\supseteq$) Let $\Sigma_P$ be a prime cyclic loop condition such that $\Pol(\C)\not\models\Sigma_P$. Then, by Lemma~\ref{lem:duofcSatisfyPclc}, there is a $c\in\N^+\!$ such that $\Sigma_{C\dotdiv c}$ is non-trivial and $\Sigma_P\Rightarrow\Sigma_{C\dotdiv c}$. Note that we can choose $c$ to be maximal for $C$. Hence the condition $\Sigma_P$ is in $\operatorname{UpsetOf}(\{\Sigma_{C\dotdiv c}\mid c\text{ is maximal for $C$}\})$.
\end{proof}

For a given finite disjoint union of cycles, Lemmata~\ref{lem:surjectivety} and~\ref{lem:pclCharacterization}
provide a method to construct a finite disjoint union of cycles of square-free length with the same pp-constructability type which can be carried out by hand on small examples, as illustrated by the following example.
\begin{example}\label{exa:620Structure}
Consider the structure $\Cyc{6,20}$.  The set from Lemma~\ref{lem:pclCharacterization} containing all conditions of the form $\Sigma_{\{6,20\}\dotdiv c}$ with $c$ maximal for $\{6,20\}$ is $\{\Sigma_{2,4},\Sigma_{3,2},\Sigma_{3,5}\}$, which is equivalent to   $\{\Sigma_{2},\Sigma_{3,2},\Sigma_{3,5}\}$.
Note that $\Sigma_2\Rightarrow\Sigma_{3,2}$. Hence,
\[\PL(\Cyc{6,20})\stackrel{\ref{lem:pclCharacterization}}{=}\operatorname{UpsetOf}(\{\Sigma_{2},\Sigma_{3,2},\Sigma_{3,5}\})=\operatorname{UpsetOf}(\{\Sigma_{3,2},\Sigma_{3,5}\})\stackrel{\ref{lem:surjectivety}}{=}\PL(\Cyc{3,10}).\]
Therefore, $\Cyc{6,20}\equiv\Cyc{3,10}$. 
\eoe
\end{example}

We have a similar situation for cyclic loop conditions: Corollary \ref{cor:flat} in particular states that every cyclic loop condition is equivalent to one that only uses square-free numbers.
Recall from Example~\ref{exa:620} that \[\Sigma_{6,20}\Leftrightarrow\{\Sigma_{2},\Sigma_{3,2},\Sigma_{3,5}\}\Leftrightarrow\{\Sigma_{2},\Sigma_{3,5}\}\Leftrightarrow\Sigma_{6,10}.\]

The different behaviors of disjoint unions of cycles and cyclic loop conditions when constructing square-free representatives is explained by the following observations. 
Let $C\subset\N$ finite and 
\[S=\{\{ p\in (C\dotdiv c)\mid p\text{ is prime}\}\mid \text{$c$ is maximal for $C$}\}.\]  
Let $S_{\min}$ ($S_{\max}$) be the set of minimal (maximal) elements of $S$ with respect to inclusion. Then  
\begin{itemize}
    \item $\C$ has the same pp-constructability type as $\bigtimes_{P\in S_{\min}}\mathbb P$,
    \item $\Sigma_C\Leftrightarrow\{\Sigma_P\mid P\in S_{\max}\}\Leftrightarrow\Sigma_{\Flat(C)}$, and
    \item if $C$ contains only square-free numbers, then $S=S_{\min}=S_{\max}$ and $S$ is an antichain.
\end{itemize}
Compare this to  Example~\ref{exa:620Structure}, where $C=\{6,20\}$. In this case we have that $S=\{\{2\},\{3,2\},\{3,5\}\}$, $S_{\min}=\{\{3,2\},\{3,5\}\}$, and $S_{\max}=\{\{2\},\{3,5\}\}$.

Recall that every finite smooth digraph either pp-constructs $\KThree$ or is homomorphically equivalent to a finite disjoint union of cycles (Theorem~\ref{thm:BartoKozikNiven}).
The following corollary is an immediate consequence of this fact together with Corollary~\ref{thm:classificationPoset} and Lemma~\ref{lem:surjectivety}.

\begin{corollary}\label{cor:cyclesSquarefree}
Let $\mathbb{G}$ be a finite smooth digraph. Then either $[\mathbb G]=[\KThree]$ or there is a finite disjoint union of cycles $\C$ whose cycle lengths are square-free such that $[\mathbb G]= [\C]$.
\end{corollary}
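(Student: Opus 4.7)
The plan is to combine the dichotomy of Barto--Kozik--Niven with the surjectivity result already proved in Lemma~\ref{lem:surjectivety}. First I would invoke Theorem~\ref{thm:BartoKozikNiven}: either $\mathbb{G}$ pp-constructs $\KThree$, in which case, since $[\KThree]$ is the bottom element of $\PPPoset$ (as noted in Section~\ref{sec:smoothDigraphs}), we automatically have $[\mathbb{G}] = [\KThree]$ and we are done; or $\mathbb{G}$ is homomorphically equivalent to some finite disjoint union of cycles $\D$, and then $[\mathbb{G}] = [\D]$ in $\PPPoset$. Hence it suffices to produce, for each finite disjoint union of cycles $\D$, a finite disjoint union of cycles $\C$ whose cycle lengths are square-free with $[\D] = [\C]$.

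For this, I would pass to the associated set of prime cyclic loop conditions. By Lemma~\ref{lem:surjectivetyUpperBound} the set $\PL(\D)$ is a cofinite downset of $\mPCL$. Let $\Downset_{\min}$ be the (finite) set of minimal elements of $\mPCL \setminus \PL(\D)$, and define
\[
\C \coloneqq \bigtimes_{\Sigma \in \Downset_{\min}} \mathbb{G}_\Sigma.
\]
Lemma~\ref{lem:surjectivety} guarantees that $\C$ is a finite disjoint union of cycles satisfying $\PL(\C) = \PL(\D)$, and then Corollary~\ref{thm:classificationPoset} (the injectivity of $[\,\cdot\,] \mapsto [\PL(\,\cdot\,)]$ on $\SDPoset$) yields $[\C] = [\D]$.

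The one remaining verification — and the only place where anything needs checking beyond citing lemmas — is that the cycle lengths of $\C$ are square-free. Inspecting the proof of Lemma~\ref{lem:surjectivety}, the lengths in $C$ have the form $\lcm(\{p_T \mid \Sigma_T \in \Downset_{\min}\})$ where each $p_T$ is a prime in $T$. Since the least common multiple of any (multi)set of primes equals the product of the distinct primes appearing, such a number is square-free. Thus every cycle length of $\C$ is square-free, completing the proof. There is no real obstacle here: all the work was done in establishing Lemma~\ref{lem:surjectivety} and Corollary~\ref{thm:classificationPoset}, and the present corollary is essentially a direct read-off of the construction combined with Theorem~\ref{thm:BartoKozikNiven}.
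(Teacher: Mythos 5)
Your proposal is correct and follows the paper's own route: the paper derives this corollary exactly from Theorem~\ref{thm:BartoKozikNiven} combined with Corollary~\ref{thm:classificationPoset} and Lemma~\ref{lem:surjectivety} (the square-free reduction for disjoint unions of cycles being the unnamed corollary just before Corollary~\ref{thm:classificationPoset}). Your extra check that the lengths $\lcm(\{p_T\mid \Sigma_T\in\Downset_{\min}\})$ are square-free is the right observation and is implicit in the paper's construction.
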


\section{The lattices of disjoint unions of cycles and  cyclic loop conditions}\label{sec:lattice}

The characterizations from Corollary~\ref{cor:characterizationMCLAndmCL} and  Theorem~\ref{thm:classificationPoset} suggest that the posets $\MPCL$ and $\SDPoset$ can be described lattice-theoretically. 
\begin{figure}
    \centering
    \begin{tikzpicture}[scale=0.6]
\def\myScale{0.8}

\node[scale=\myScale] (0) at (2,-2)  {$[\Cyc{2,3,5}]$};
\node[scale=\myScale] (00) at (2,0)  {$[\Cyc{6,10,15}]$};
\node[scale=\myScale] (11) at (0,2)  {$[\Cyc{2,15}]$};
\node[scale=\myScale] (12) at (2,2) {$[\Cyc{3,10}]$};
\node[scale=\myScale] (13) at (4,2) {$[\Cyc{5,6}]$};
\node[scale=\myScale] (21) at (0,4) {$[\Cyc{10,15}]$};
\node[scale=\myScale] (22) at (2,4) {$[\Cyc{6,15}]$};
\node[scale=\myScale] (23) at (4,4) {$[\Cyc{6,10}]$};
\node[scale=\myScale] (31) at (-2,6) {$[\Cyc{2,3}]$};
\node[scale=\myScale] (33) at (4,6) {$[\Cyc{2,5}]$};
\node[scale=\myScale] (34) at (6,6) {$[\Cyc{3,5}]$};
\node[scale=\myScale] (32) at (2,6) {$[\Cyc{30}]$};
\node[scale=\myScale] (41) at (0,8) {$[\Cyc{6}]$};
\node[scale=\myScale] (42) at (2,8) {$[\Cyc{10}]$};
\node[scale=\myScale] (43) at (4,8) {$[\Cyc{15}]$};
\node[scale=\myScale] (51) at (0,10) {$[\Cyc{2}]$};
\node[scale=\myScale] (52) at (2,10) {$[\Cyc{3}]$};
\node[scale=\myScale] (53) at (4,10) {$[\Cyc{5}]$};
\node[scale=\myScale] (60) at (2,12) {$[\Cyc{1}]$};
\path 
    (0)  edge (00)
    (00) edge (11)
    (00) edge (12)
    (00) edge (13)
    (11) edge (21)
    (11) edge (22)
    (12) edge (21)
    (12) edge (23)
    (13) edge (22)
    (13) edge (23)
    (21) edge (31)
    (21) edge (32)
    (22) edge (32)
    (22) edge (33)
    (23) edge (32)
    (23) edge (34)
    (31) edge (41)
    (32) edge (41)
    (32) edge (42)
    (32) edge (43)
    (33) edge (42)
    (34) edge (43)
    (41) edge (51)
    (41) edge (52)
    (42) edge (51)
    (42) edge (53)
    (43) edge (52)
    (43) edge (53)
    (51) edge (60)
    (52) edge (60)
    (53) edge (60)
    ;
\end{tikzpicture}
\hspace{1cm}
\begin{tikzpicture}[scale=0.525]
\def\myScale{0.4}

\node at (2,-2.2)  {};

\node[circle, draw,scale=\myScale] (0) at (2,-2)  {};
\node[circle, draw,scale=\myScale] (00) at (2,0)  {};
\node[circle, draw,scale=\myScale] (11) at (0,2)  {};
\node[circle, draw,scale=\myScale] (12) at (2,2) {};
\node[circle, draw,scale=\myScale] (13) at (4,2) {};
\node[circle, draw,scale=\myScale] (21) at (0,4) {};
\node[circle, draw,scale=\myScale] (22) at (2,4) {};
\node[circle, draw,scale=\myScale] (23) at (4,4) {};
\node[circle, fill,scale=\myScale] (31) at (-2,6) {};
\node[circle, fill,scale=\myScale] (33) at (4,6) {};
\node[circle, fill,scale=\myScale] (34) at (6,6) {};
\node[circle, draw,scale=\myScale] (32) at (2,6) {};
\node[circle, draw,scale=\myScale] (41) at (0,8) {};
\node[circle, draw,scale=\myScale] (42) at (2,8) {};
\node[circle, draw,scale=\myScale] (43) at (4,8) {};
\node[circle, draw,scale=\myScale] (51) at (0,10) {};
\node[circle, draw,scale=\myScale] (52) at (2,10) {};
\node[circle, draw,scale=\myScale] (53) at (4,10) {};
\node[circle, draw,scale=\myScale] (60) at (2,12) {};
\node[circle, draw,scale=\myScale] (70) at (2,14)  {};

\path 
    (0)  edge (00)
    (00) edge (11)
    (00) edge (12)
    (00) edge (13)
    (11) edge (21)
    (11) edge (22)
    (12) edge (21)
    (12) edge (23)
    (13) edge (22)
    (13) edge (23)
    (21) edge (31)
    (21) edge (32)
    (22) edge (32)
    (22) edge (33)
    (23) edge (32)
    (23) edge (34)
    (31) edge (41)
    (32) edge (41)
    (32) edge (42)
    (32) edge (43)
    (33) edge (42)
    (34) edge (43)
    (41) edge (51)
    (41) edge (52)
    (42) edge (51)
    (42) edge (53)
    (43) edge (52)
    (43) edge (53)
    (51) edge (60)
    (52) edge (60)
    (53) edge (60)
    (60) edge (70)
    ;
\end{tikzpicture}
\caption{The poset $\SDPoset$ restricted to disjoint unions of cycles using 2, 3, and 5 (left). The free distributive lattice on $3$ generators (right).}
    \label{fig:freedist3}
\end{figure}
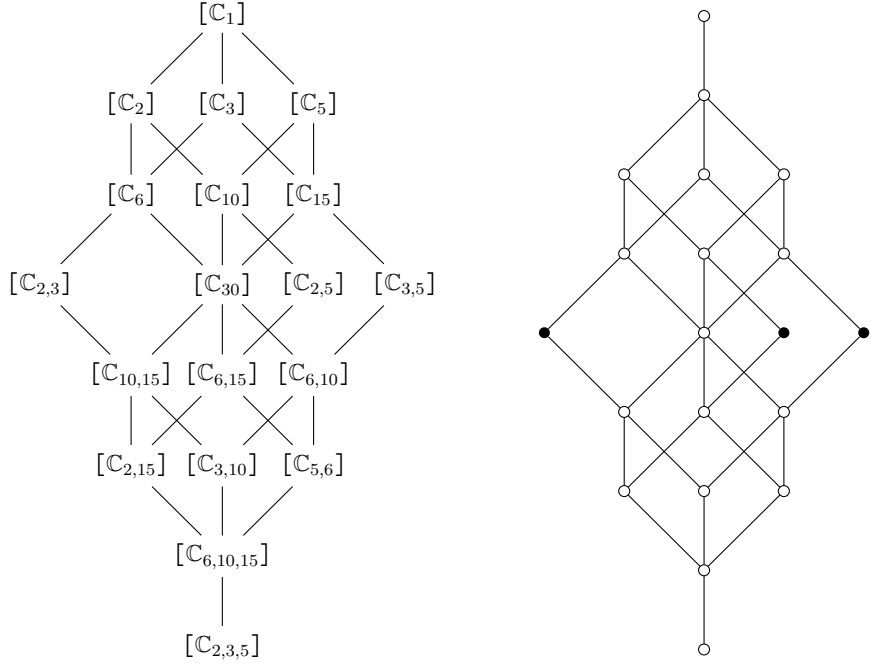
Observe that the poset in Figure~\ref{fig:freedist3} (left) is isomorphic to  the free distributive lattice on $3$ generators, $\mathcal F_D(3)$, after removing the top element. 
More generally, whenever we restrict $\SDPoset$ to disjoint unions of cycles using only a fixed finite set of $n$ primes, then the resulting poset is isomorphic to $\mathcal F_D(n)$ (again, after removing the top element).

Consider the power set $2^X$ of a countably infinite set $X$ ordered by inclusion. We denote by $\FD$ the poset of all downsets of $2^X$ ordered by inclusion.
Markowsky proved that $\FD$ is the free completely distributive lattice, i.e., complete and distributive over infinite meets and joins, on countably many generators~\cite{Markowsky}. 
The generating set consists of the principal downsets generated by $X\setminus\{x\}$ for $x\in X$. If we choose $X$ to be the set of all primes, then the following corollary is an immediate consequence of Corollaries~\ref{cor:characterizationMCLAndmCL} and~\ref{thm:classificationPoset}. First, for every finite set $C\subset\N^+$ define $\operatorname{Drop}(\Sigma_C)\coloneqq C$.
\begin{corollary}\label{thm:gorgeous}
The following holds
\begin{align*}
    \SDPoset&\hookrightarrow\MPCL\hookrightarrow\FD.
\end{align*}
One choice for the second embedding $\iota$ is to map an element of $\MPCL$, represented by a set of prime cyclic loop conditions $\Gamma$, to the downset of $\operatorname{Drop}(\Gamma)$.

\end{corollary}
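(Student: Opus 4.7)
The plan is to derive both embeddings directly from the characterizations proved in Sections~\ref{sec:conditions} and~\ref{sec:structures}.

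For the first embedding $\SDPoset\hookrightarrow\MPCL$, I would simply chain Corollary~\ref{thm:classificationPoset} (which identifies $\SDPoset$ with the cofinite downsets of $\mPCL$ under $\subseteq$) with Corollary~\ref{cor:characterizationMCLAndmCL} (which identifies $\MPCL$ with all downsets of $\mPCL$ under $\subseteq$). The tautological inclusion of cofinite downsets into all downsets is injective and preserves $\subseteq$, so the composition is the desired poset embedding.

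For the second embedding $\MPCL\hookrightarrow\FD$ via $\iota\colon[\Gamma]\mapsto{\downarrow}\operatorname{Drop}(\Gamma)$, the key tool is Corollary~\ref{cor:implicationPclc}, which characterises $\Gamma\Rightarrow\Sigma_Q$ as the existence of some $\Sigma_P\in\Gamma$ with $P\subseteq Q$. Using this, I would verify well-definedness of $\iota$ on equivalence classes by noting that $\Gamma\Leftrightarrow\Gamma'$ means the two sets imply the same single prime cyclic loop conditions, which via Corollary~\ref{cor:implicationPclc} translates to $\operatorname{Drop}(\Gamma)$ and $\operatorname{Drop}(\Gamma')$ generating the same downset inside $2^X$. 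Order preservation follows by another appeal to the same corollary: $[\Gamma']\geq[\Gamma]$ iff $\Gamma'\Rightarrow\Gamma$ iff every single prime cyclic loop condition implied by $\Gamma$ is also implied by $\Gamma'$, which is exactly the required inclusion of generated downsets. Injectivity is the converse direction of the same characterisation, combined with the fact noted just before Corollary~\ref{cor:characterizationMCLAndmCL} that every $[\Gamma]\in\MPCL$ is determined by the collection of single prime cyclic loop conditions it implies.

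The main subtlety I expect is keeping track of the ordering conventions: $\mPCL$ is ordered by $\Leftarrow$, which via Corollary~\ref{cor:mPclEqPPC} corresponds to $\supseteq$ on finite nonempty sets of primes, whereas $\FD$ is defined using the inclusion order on $2^X$. I would therefore need to confirm that ``the downset of $\operatorname{Drop}(\Gamma)$'' in the statement is taken in the direction that both produces a member of $\FD$ and matches up monotonicity correctly (namely, the downset in $(2^X,\supseteq)$, equivalently the $\subseteq$-upward closure of $\operatorname{Drop}(\Gamma)$); with that convention pinned down, the rest of the argument is a mechanical translation through Corollary~\ref{cor:implicationPclc}.
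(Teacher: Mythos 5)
Your proposal is correct and takes essentially the same route as the paper, which presents the corollary as an immediate consequence of Corollaries~\ref{thm:classificationPoset} and~\ref{cor:characterizationMCLAndmCL}, with Corollary~\ref{cor:implicationPclc} (and the remark preceding Corollary~\ref{cor:characterizationMCLAndmCL}) supplying exactly the well-definedness, monotonicity, and injectivity checks you spell out for $\iota$. Your resolution of the orientation of ``downset'' --- reading it in $(2^X,\supseteq)$, i.e.\ as the $\subseteq$-upward closure of $\operatorname{Drop}(\Gamma)$ --- is the correct one: it is the reading under which the paper's remark about the resulting element of $\FD$ containing infinite sets makes sense, and it is the direction for which $\iota$ is monotone rather than antitone.
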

Note that, since $\operatorname{Drop}(\Gamma)$ only contains finite sets it is necessary to take the downset to obtain an element of $\FD$ whose nonempty elements also contain infinite sets.
Furthermore, the image of $\iota$ is closed under finite meets and joins. Hence, the subposet $\iota(\MPCL)$ is also a sublattice of $\FD$.  Therefore, $\MPCL$ is a distributive lattice. Analogously, we obtain that $\SDPoset$ is a distributive lattice.

\begin{corollary}\label{cor:PSDIsLattice}
The poset of finite smooth digraphs ordered by pp-constructability is a complete and distributive lattice.
\end{corollary}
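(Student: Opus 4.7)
The plan is to reduce everything to the already-established distributivity of $\SDPoset$ and its characterization as cofinite downsets of $\mPCL$. By Theorem~\ref{thm:BartoKozikNiven}, the poset of pp-constructability types of finite smooth digraphs equals $\SDPoset \cup \{[\KThree]\}$, with $[\KThree]$ (the bottom of $\PPPoset$) as the unique minimum.

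For completeness, I would use the isomorphism of Corollary~\ref{thm:classificationPoset} to identify $\SDPoset$ with the poset of cofinite downsets of $\mPCL$ ordered by inclusion. Given any family $\{\Gamma_i\}_{i \in I}$ of cofinite downsets, their union is again a downset whose complement, being contained in $\mPCL \setminus \Gamma_{i_0}$ for any fixed $i_0$, is finite; hence the union is cofinite. So $\SDPoset$ already admits arbitrary joins, and adjoining $[\KThree]$ preserves them. A poset with a least element and arbitrary joins is automatically a complete lattice, since meets are then recovered as the joins of the nonempty sets of lower bounds. In particular, the meet of a family $\{[\C_i]\}$ is realised by $\bigcap_i \PL(\C_i)$ whenever that intersection is cofinite (using Lemma~\ref{lem:surjectivety} to supply a representative), and by $[\KThree]$ otherwise; in the latter case Lemma~\ref{lem:surjectivetyUpperBound} ensures that no $\PL(\C)$ can lie inside a non-cofinite subset of $\mPCL$, so that $[\KThree]$ is indeed the only common lower bound.

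For distributivity, the paragraph preceding this corollary already establishes that $\SDPoset$ is a distributive lattice, since its image under the embedding of Corollary~\ref{thm:gorgeous} is closed under finite meets and joins in the (completely distributive) lattice $\FD$. Adjoining $[\KThree]$ preserves the distributive law: if $a = [\KThree]$, then both sides of $a \wedge (b \vee c) = (a \wedge b) \vee (a \wedge c)$ collapse to $[\KThree]$, while if one of $b$ or $c$ equals $[\KThree]$, the identity reduces to a trivially true equation involving the remaining two elements of $\SDPoset$; the dual law is symmetric.

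The main hurdle is the completeness argument, since infinite intersections of cofinite subsets of $\mPCL$ can fail to be cofinite. This is precisely why $\SDPoset$ on its own is not a complete lattice, and the adjunction of $[\KThree]$ provided by Theorem~\ref{thm:BartoKozikNiven} is exactly what supplies the missing meets. Everything else is a routine synthesis of Corollary~\ref{thm:classificationPoset}, Lemmas~\ref{lem:surjectivety} and~\ref{lem:surjectivetyUpperBound}, and the already-proved distributivity of $\SDPoset$.
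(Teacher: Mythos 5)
Your proof is correct, and it reaches the conclusion by the dual of the paper's completeness argument. The paper observes that every element of $\SDPoset$ has only finitely many elements above it (its image is a cofinite downset), so the meet of any \emph{infinite} subset collapses to $[\KThree]$; finite meets exist because $\SDPoset$ is a lattice, and arbitrary joins are then recovered as meets of the (necessarily finite and nonempty) sets of upper bounds. You instead construct arbitrary joins directly --- a union of cofinite downsets of $\mPCL$ is again a cofinite downset, hence realised by Lemma~\ref{lem:surjectivety} --- and recover meets as joins of lower-bound sets, adding the explicit description that the meet is the intersection $\bigcap_i\PL(\C_i)$ when this is cofinite and $[\KThree]$ otherwise, the latter case being justified correctly via Lemma~\ref{lem:surjectivetyUpperBound} (a cofinite set cannot sit inside a non-cofinite one). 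Both routes rest on Corollary~\ref{thm:classificationPoset}; yours buys an explicit formula for infinite meets and joins at the cost of invoking the realisation lemma, while the paper's is shorter because infinite meets are simply the bottom and no candidate downset ever needs to be realised. The distributivity part is handled identically in both arguments: adjoining a bottom element to the distributive lattice $\SDPoset$ preserves the finite distributive laws, as your case analysis confirms.
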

\begin{proof}
We have already argued that $\SDPoset$ is a distributive lattice. Adding a bottom element does not destroy distributivity. For completeness, consider an infinite set $\mathcal{C}\subseteq\SDPoset$. Note that every element in $\SDPoset$ has only finitely many elements above it. Hence, $\bigwedge\mathcal{C}=[\KThree]$. 
Since $\bigvee\mathcal{C}=\bigwedge\{\C\mid \C\geq \D\text{ for all }\D\in\mathcal C\}$ we have that $\mathcal C$ also has a supremum. 
\end{proof}
One could wonder whether $\SDPoset$ also distributes over infinite meets and joins. This is not the case as shown by this counterexample, provided (personal communication) by Friedrich Martin Schneider,
\[[\Cyc2]\vee\bigwedge_{\substack{p\text{ an odd}\\\text{prime}}}[\Cyc p]=[\Cyc2]\neq[\Cyc1]=\bigwedge_{\substack{p\text{ an odd}\\\text{prime}}} (\Cyc2\vee\Cyc p).\]

Here we summarize the results from
Corollaries~\ref{cor:characterizationMCLAndmCL} and~\ref{cor:mPclEqPPC} and Theorem~\ref{thm:classificationPoset} about the posets that have been studied in this article.
\begin{itemize}
    \item $\mPCL\simeq \PCPoset\simeq (\{P\mid P\text{ a finite nonempty set of primes}\},\supseteq)$
    \item $\mCL\simeq(\operatorname{Finitely Generated Downsets Of}(\mPCL),\subseteq)$.
    \item $\MCL=\MPCL\simeq(\operatorname{ Downsets Of}(\mPCL),\subseteq)$.
    \item $\SDPoset\simeq(\operatorname{Cofinite Downsets Of}(\mPCL),\subseteq)$.
\end{itemize}

Note that, $\mCL$ is isomorphic to a sublattice of $\MPCL$, hence it is a distributive lattice, as well. The meet and join of $\mCL$ can be described in the following way.

\begin{lemma}\label{lem:meetAndJoinInMCL}
Let $[\Sigma_C], [\Sigma_D]\in\mCL$. Then
\begin{enumerate}
    \item $[\Sigma_C]\wedge[\Sigma_D]=[\Sigma_{C\cup D}]$ and
    \item $[\Sigma_C]\vee[\Sigma_D]=[\Sigma_{C\cdot D}]=[\Sigma_{\C\mathbin\times \D}]$.
\end{enumerate} 
\end{lemma}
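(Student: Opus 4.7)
The plan is to verify directly that each claimed representative satisfies the defining universal property of meet (respectively join) in $\mCL$. Recall that $\mCL$ is ordered by $\Leftarrow$, so the meet of $[\Sigma_C]$ and $[\Sigma_D]$ is characterised as the implication-weakest cyclic loop condition that is implied by both $\Sigma_C$ and $\Sigma_D$, while the join is characterised as the implication-strongest cyclic loop condition that implies both. Almost all of the work has already been done; the two items are immediate consequences of Theorem~\ref{thm:implicationSingleClc} and Corollary~\ref{cor:bulletLoopConditions}, respectively.

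For (1), I would first invoke Lemma~\ref{lem:orderOnLoopcond}(2), iterated, to obtain $\Sigma_C\Rightarrow\Sigma_{C\cup D}$ and $\Sigma_D\Rightarrow\Sigma_{C\cup D}$, showing that $[\Sigma_{C\cup D}]$ is a common lower bound in the $\Leftarrow$-order. For universality, suppose $\Sigma_E$ is a cyclic loop condition with $\Sigma_C\Rightarrow\Sigma_E$ and $\Sigma_D\Rightarrow\Sigma_E$. Applying characterisation (3) of Theorem~\ref{thm:implicationSingleClc} to each of the two assumptions, every $a\in C$ and every $a\in D$ admits some $b\in E$ and $k\in\N$ with $b\mid a^{k}$; hence the same holds for every $a\in C\cup D$, and Theorem~\ref{thm:implicationSingleClc} yields $\Sigma_{C\cup D}\Rightarrow\Sigma_E$, as required.

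For (2), the key input is Corollary~\ref{cor:bulletLoopConditions}, which states $\{\Sigma_C,\Sigma_D\}\Leftrightarrow\Sigma_{C\cdot D}$. The forward implication shows that $\Sigma_{C\cdot D}$ implies both $\Sigma_C$ and $\Sigma_D$, so $[\Sigma_{C\cdot D}]$ is a common upper bound. Conversely, if $\Sigma_E$ is a cyclic loop condition implying both $\Sigma_C$ and $\Sigma_D$, then $\Sigma_E$ implies the set $\{\Sigma_C,\Sigma_D\}$, and by Corollary~\ref{cor:bulletLoopConditions} this set is equivalent to $\Sigma_{C\cdot D}$, so $\Sigma_E\Rightarrow\Sigma_{C\cdot D}$; this exhibits $[\Sigma_{C\cdot D}]$ as the least upper bound.

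It remains to establish the equality $[\Sigma_{C\cdot D}]=[\Sigma_{\C\mathbin\times\D}]$. The set associated to $\C\mathbin\times\D$ is $\{\lcm(a,b)\mid a\in C,\,b\in D\}$, whereas $C\cdot D=\{a\cdot b\mid a\in C,\,b\in D\}$; since $a\cdot b$ and $\lcm(a,b)$ share the same set of prime divisors for every $a,b$, the two sets have identical images under $\Flat$, and Corollary~\ref{cor:flat} gives $\Sigma_{C\cdot D}\Leftrightarrow\Sigma_{\C\mathbin\times\D}$. Alternatively, one can verify condition (3) of Theorem~\ref{thm:implicationSingleClc} in both directions using $\lcm(a,b)\mid a\cdot b$ and $a\cdot b\mid\lcm(a,b)^{2}$. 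There is no substantive obstacle in the proof; the only piece of bookkeeping to keep straight is that, because $\mCL$ is ordered by $\Leftarrow$, the operation $\cup$ on index sets indexes the meet while the pointwise product $\cdot$ indexes the join.
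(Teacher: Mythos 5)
Your proof is correct, and it reaches the conclusion by a somewhat different route than the paper. The paper first reduces to square-free index sets and then works entirely with graph homomorphisms: condition (4) of Theorem~\ref{thm:implicationSingleClc} (which for square-free sets can be taken with $k=1$) turns the hypotheses $\Sigma_C\Rightarrow\Sigma_E$, $\Sigma_D\Rightarrow\Sigma_E$, $\Sigma_{E'}\Rightarrow\Sigma_C$, $\Sigma_{E'}\Rightarrow\Sigma_D$ into homomorphisms $f,g,f',g'$, and the two universal properties are witnessed by the glued map $h\colon\C\cup\D\to\mathbb E$ and the pairing map $h'\colon\mathbb E'\to\C\mathbin\times\D$. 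You instead verify the meet via the arithmetic characterization (3) of the same theorem, which makes the glueing step completely transparent and avoids the square-free reduction, and you verify the join via Corollary~\ref{cor:bulletLoopConditions}, $\{\Sigma_C,\Sigma_D\}\Leftrightarrow\Sigma_{C\cdot D}$, which is arguably the cleanest way to see that $[\Sigma_{C\cdot D}]$ is the least upper bound; your derivation of $[\Sigma_{C\cdot D}]=[\Sigma_{\C\mathbin\times\D}]$ from Corollary~\ref{cor:flat} matches the remark the paper makes immediately after that corollary. One caveat: in your opening sentence the words ``weakest'' and ``strongest'' are swapped relative to the paper's convention (greater in $\mCL$ means stronger, so the meet is the \emph{strongest} condition implied by both, and the join is the \emph{weakest} condition implying both); the verifications you actually carry out establish the correct universal properties, so this is only a terminological slip, but it should be fixed.
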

\begin{proof}
Let $\Sigma_E$ and $\Sigma_{E'}$ be cyclic loop conditions.
Assume without loss of generality that the sets $C$, $D$, $E$, and $E'$ contain only square-free numbers.

Clearly, $\C\to\C\cup\D$, $\D\to \C\cup\D$, $\C\mathbin\times\D\to\C$, and $\C\mathbin\times\D\to\D$. Suppose that $\Sigma_C\Rightarrow\Sigma_E$, $\Sigma_D\Rightarrow\Sigma_E$, 
$\Sigma_{E'}\Rightarrow\Sigma_{C}$, and $\Sigma_{E'}\Rightarrow\Sigma_{D}$. Then, by Theorem~\ref{thm:implicationSingleClc}, there are homomorphisms $f,g,f',$ and $g'$ as in Figure~\ref{fig:meetAndJoin}. 
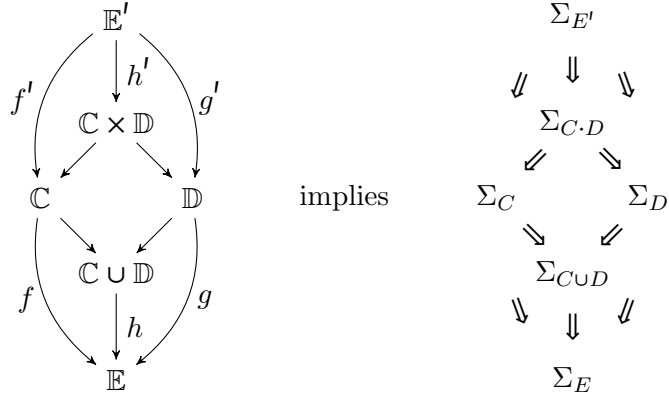
\begin{figure}
    \centering
    \begin{tikzpicture}
        \node (C) at (-1,0) {$\C$};
        \node (D) at (1,0) {$\D$};
        \node (CuD) at (0,-1) {$\C\cup\D$};
        \node (E) at (0,-2.414) {$\mathbb E$};
        \node (CxD) at (0,1) {$\C\mathbin\times\D$};
        \node (Ep) at (0,2.414) {$\mathbb E'$};
        
        \path[->,>=stealth']
            (C) edge (CuD)
            (D) edge (CuD)
            (CxD) edge (C)
            (CxD) edge (D)
            (Ep) edge node[right] {$h'$} (CxD)
            (Ep) edge[bend right] node[left] {$f'$} (C)
            (Ep) edge[bend left] node[right] {$g'$} (D)
            (CuD) edge node[right] {$h$}(E)
            (C) edge[bend right] node[left] {$f$}(E)
            (D) edge[bend left] node[right] {$g$}(E)
            ;
        \node at (3,0) {implies};
        \node (C) at (-1+6,0) {$\Sigma_C$};
        \node (D) at (1+6,0) {$\Sigma_D$};
        \node (CuD) at (0+6,-1) {$\Sigma_{C\cup D}$};
        \node (E) at (0+6,-2.414) {$\Sigma_E$};
        \node (CxD) at (0+6,1) {$\Sigma_{C\cdot D}$};
        \node (Ep) at (0+6,2.414) {$\Sigma_{E'}$};
        
        \node[rotate=-90] at (0+6,-1.707) {$\Rightarrow$};
        \node[rotate=-90] at (0+6,1.707) {$\Rightarrow$};
        \node[rotate=-70] at (-0.7+6,-1.5) {$\Rightarrow$};
        \node[rotate=-110] at (0.7+6,-1.5) {$\Rightarrow$};
        \node[rotate=-45] at (-0.5+6,-0.5) {$\Rightarrow$};
        \node[rotate=-135] at (0.5+6,-0.5) {$\Rightarrow$};
        \node[rotate=-110] at (-0.7+6,1.5) {$\Rightarrow$};
        \node[rotate=-70] at (0.7+6,1.5) {$\Rightarrow$};
        \node[rotate=-135] at (-0.5+6,0.5) {$\Rightarrow$};
        \node[rotate=-45] at (0.5+6,0.5) {$\Rightarrow$};
        
    \end{tikzpicture}
    \caption{Determining the meet and join of $[\Sigma_C]$ and $[\Sigma_D]$.}
    \label{fig:meetAndJoin}
\end{figure}
The maps 
\begin{align*}
    h\colon\C\cup\D&\to \mathbb E & h'\colon\mathbb E'&\to\C\mathbin\times\D\\
u&\mapsto
\begin{cases}
f(u)&\text{if }u\in\C\\
g(u)&\text{otherwise}
\end{cases} &
u&\mapsto (f(u),g(u))
\end{align*}
are homomorphism as well. Hence $[\Sigma_{C\cup D}]$ is the meet and $[\Sigma_{C\mathbin\times D}]$ is the join of $[\Sigma_C]$ and $[\Sigma_D]$. 
\end{proof}

Using Lemmata~\ref{lem:surjectivety} and~\ref{lem:pclCharacterization} we can describe the meet of $\SDPoset$ of disjoint unions of prime cycles.
\begin{corollary}
For any finite antichain $Q$ of $\PCPoset$ we have that
\[\bigwedge_{\mathbb P\in Q} [\mathbb P]=\left[\bigtimes_{\mathbb P\in Q} \mathbb P\right].\]
\end{corollary}

Note that this connection between $\times$ and $\wedge$ does not hold in general as seen in the following example.
\begin{example}
We have that $[\Cyc{30}\mathbin\times\Cyc{2,3}]=[\Cyc{30}]$ and, as one can see in Figure~\ref{fig:freedist3}, $[\Cyc{30}]\wedge[\Cyc{2,3}]=[\Cyc{10,15}]$. 
We have that $[\Cyc{10,15}\mathbin\times\Cyc{6,10}]=[\Cyc{10}]$ and, as one can see in Figure~\ref{fig:freedist3}, $[\Cyc{10,15}]\wedge[\Cyc{6,10}]=[\Cyc{3,10}]$.
\eoe
\end{example}

Observe that a prime cyclic loop condition $[\Sigma_P]\in\mCL$ corresponds to a downset of $\mPCL$, which is generated by a single element. Analogously, a disjoint union of prime cycles $[\mathbb P]\in\SDPoset$ corresponds to the complement of an upset of $\mPCL$, which is generated by a single element. 

\begin{corollary}\label{cor:joinAndMeetIrred}
The join irreducible elements of $\mCL\setminus\{[\Sigma_1]\}$ are exactly the elements that can be represented by a prime cyclic loop condition.

The meet irreducible elements of $\SDPoset\setminus\{[\Cyc 1]\}$ are exactly the elements that can be represented by a finite disjoint union of prime cycles.
\end{corollary}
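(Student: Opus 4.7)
The plan is to transport both statements along the lattice isomorphisms of Corollaries~\ref{cor:characterizationMCLAndmCL} and~\ref{thm:classificationPoset}, noting that under these isomorphisms the finite join and meet correspond to union and intersection of the associated subsets of $\mPCL$.

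For the first statement, I would work inside the lattice of finitely generated downsets of $\mPCL$ ordered by inclusion, to which $\mCL$ is isomorphic. Here $[\Sigma_1]$ corresponds to the empty downset, which is the bottom of the lattice and is excluded. Any nonempty finitely generated downset $\Downset$ equals the union of the principal downsets generated by its finitely many maximal elements (an antichain in $\mPCL$), so if $\Downset$ has at least two maximal elements then this is a nontrivial join and $\Downset$ is not join irreducible. Conversely, a principal downset ${\downarrow}\Sigma_P$ is join irreducible: if ${\downarrow}\Sigma_P = \Downset_1 \cup \Downset_2$ then $\Sigma_P \in \Downset_i$ for some $i$, forcing ${\downarrow}\Sigma_P \subseteq \Downset_i$. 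Hence the join irreducibles are precisely the principal downsets, which correspond bijectively to (equivalence classes of) prime cyclic loop conditions.

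For the second statement, I would pass from $\SDPoset$ to its antiisomorphic copy obtained by complementation: cofinite downsets of $\mPCL$ correspond bijectively to finite upsets, and intersection of downsets becomes union of upsets, so meet in $\SDPoset$ matches join in the lattice of finite upsets ordered by inclusion. After excluding $[\Cyc 1]$, which corresponds to the empty upset, the argument from the first part applied dually shows that the join irreducible finite upsets are exactly the principal upsets ${\uparrow}\Sigma_P$. Applying Lemma~\ref{lem:surjectivety} to the cofinite downset $\mPCL \setminus {\uparrow}\Sigma_P$, whose unique minimal excluded element is $\Sigma_P$, the realising structure is the single factor $\mathbb G_{\Sigma_P} = \mathbb P$. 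Hence the meet irreducibles of $\SDPoset \setminus \{[\Cyc 1]\}$ are precisely the classes $[\mathbb P]$ with $\mathbb P$ a finite disjoint union of prime cycles. The only real content is the lattice-theoretic fact that in a lattice of downsets ordered by inclusion the join irreducibles are the principal downsets, so I expect no significant obstacle beyond bookkeeping.
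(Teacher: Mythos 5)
Your proposal is correct and follows essentially the same route as the paper: the corollary is stated there as an immediate consequence of the observation that under the isomorphisms of Corollaries~\ref{cor:characterizationMCLAndmCL} and~\ref{thm:classificationPoset} a prime cyclic loop condition corresponds to a principal downset of $\mPCL$ and a disjoint union of prime cycles to the complement of a principal upset, combined with the standard fact that in a lattice of downsets the join irreducibles are exactly the principal ones. You have merely written out the details (maximal/minimal antichain decomposition, the dualisation by complementation, and the identification of the realising structure via Lemma~\ref{lem:surjectivety}), all of which check out.
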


Let $P$ be a finite set of primes and $\N^+_P$ be the set of all positive natural numbers whose prime decomposition only uses numbers from $P$. 
Observe that
the subposet of $\SDPoset$ consisting of $\{[\C]\mid C\subset \N_P^+ \text{ finite}\}$ and the subposet of $\mCL$ consisting of $\{[\Sigma_C]\mid C\subset \N_P^+  \text{ finite}\}$ are isomorphic. 
The map 
\[[\C]\mapsto[\{\Sigma_D\mid D\subset\N_P^+\text{ finite, }\C\models\Sigma_D\}]\]
is an isomorphism.
See for example Figure~\ref{fig:SDPvsmCL}. 
However, the posets $\mCL$ and $\SDPoset$ are not isomorphic since $\mCL$ has infinite ascending chains, e.g. $([\Sigma_{p_1\cdot\ldots\cdot p_n}])_{n\in\N}$, and $\SDPoset$ does not.

\begin{figure}
    \centering
    \begin{tikzpicture}[scale=0.6]
\def\myScale{0.8}

\node[scale=\myScale] (0) at (2,-2)  {$\boldsymbol{[\pmb{\C}_{2,3,5}]}$};
\node[scale=\myScale] (00) at (2,0)  {$[\Cyc{6,10,15}]$};
\node[scale=\myScale] (11) at (0,2)  {$[\Cyc{2,15}]$};
\node[scale=\myScale] (12) at (2,2) {$[\Cyc{3,10}]$};
\node[scale=\myScale] (13) at (4,2) {$[\Cyc{5,6}]$};
\node[scale=\myScale] (21) at (0,4) {$[\Cyc{10,15}]$};
\node[scale=\myScale] (22) at (2,4) {$[\Cyc{6,15}]$};
\node[scale=\myScale] (23) at (4,4) {$[\Cyc{6,10}]$};
\node[scale=\myScale] (31) at (-2,6) {$\boldsymbol{[\pmb{\C}_{2,3}]}$};
\node[scale=\myScale] (33) at (4,6) {$\boldsymbol{[\pmb{\C}_{2,5}]}$};
\node[scale=\myScale] (34) at (6,6) {$\boldsymbol{[\pmb{\C}_{3,5}]}$};
\node[scale=\myScale] (32) at (2,6) {$[\Cyc{30}]$};
\node[scale=\myScale] (41) at (0,8) {$[\Cyc{6}]$};
\node[scale=\myScale] (42) at (2,8) {$[\Cyc{10}]$};
\node[scale=\myScale] (43) at (4,8) {$[\Cyc{15}]$};
\node[scale=\myScale] (51) at (0,10) {$\boldsymbol{[\pmb{\C}_{2}]}$};
\node[scale=\myScale] (52) at (2,10) {$\boldsymbol{[\pmb{\C}_{3}]}$};
\node[scale=\myScale] (53) at (4,10) {$\boldsymbol{[\pmb{\C}_{5}]}$};
\node[scale=\myScale] (60) at (2,12) {$\boldsymbol{[\pmb{\C}_{1}]}$};
\path 
    (0)  edge (00)
    (00) edge (11)
    (00) edge (12)
    (00) edge (13)
    (11) edge (21)
    (11) edge (22)
    (12) edge (21)
    (12) edge (23)
    (13) edge (22)
    (13) edge (23)
    (21) edge (31)
    (21) edge (32)
    (22) edge (32)
    (22) edge (33)
    (23) edge (32)
    (23) edge (34)
    (31) edge (41)
    (32) edge (41)
    (32) edge (42)
    (32) edge (43)
    (33) edge (42)
    (34) edge (43)
    (41) edge (51)
    (41) edge (52)
    (42) edge (51)
    (42) edge (53)
    (43) edge (52)
    (43) edge (53)
    (51) edge (60)
    (52) edge (60)
    (53) edge (60)
    ;
\end{tikzpicture}
\hspace{1cm}
    \begin{tikzpicture}[scale=0.6]
\def\myScale{0.8}

\node[scale=\myScale] (0) at (2,-2)  {$\boldsymbol{[\Sigma_{1}]}$};
\node[scale=\myScale] (00) at (2,0)  {$\boldsymbol{[\Sigma_{2,3,5}]}$};
\node[scale=\myScale] (11) at (0,2)  {$\boldsymbol{[\Sigma_{3,5}]}$};
\node[scale=\myScale] (12) at (2,2) {$\boldsymbol{[\Sigma_{2,5}]}$};
\node[scale=\myScale] (13) at (4,2) {$\boldsymbol{[\Sigma_{2,3}]}$};
\node[scale=\myScale] (21) at (0,4) {$[\Sigma_{5,6}]$};
\node[scale=\myScale] (22) at (2,4) {$[\Sigma_{3,10}]$};
\node[scale=\myScale] (23) at (4,4) {$[\Sigma_{2,15}]$};
\node[scale=\myScale] (31) at (-2,6) {$\boldsymbol{[\Sigma_{5}]}$};
\node[scale=\myScale] (33) at (4,6) {$\boldsymbol{[\Sigma_{3}]}$};
\node[scale=\myScale] (34) at (6,6) {$\boldsymbol{[\Sigma_{2}]}$};
\node[scale=\myScale] (32) at (2,6) {$[\Sigma_{6,10,15}]$};
\node[scale=\myScale] (41) at (0,8) {$[\Sigma_{10,15}]$};
\node[scale=\myScale] (42) at (2,8) {$[\Sigma_{6,15}]$};
\node[scale=\myScale] (43) at (4,8) {$[\Sigma_{6,10}]$};
\node[scale=\myScale] (51) at (0,10) {$[\Sigma_{15}]$};
\node[scale=\myScale] (52) at (2,10) {$[\Sigma_{10}]$};
\node[scale=\myScale] (53) at (4,10) {$[\Sigma_{6}]$};
\node[scale=\myScale] (60) at (2,12) {$[\Sigma_{30}]$};
\path 
    (0)  edge (00)
    (00) edge (11)
    (00) edge (12)
    (00) edge (13)
    (11) edge (21)
    (11) edge (22)
    (12) edge (21)
    (12) edge (23)
    (13) edge (22)
    (13) edge (23)
    (21) edge (31)
    (21) edge (32)
    (22) edge (32)
    (22) edge (33)
    (23) edge (32)
    (23) edge (34)
    (31) edge (41)
    (32) edge (41)
    (32) edge (42)
    (32) edge (43)
    (33) edge (42)
    (34) edge (43)
    (41) edge (51)
    (41) edge (52)
    (42) edge (51)
    (42) edge (53)
    (43) edge (52)
    (43) edge (53)
    (51) edge (60)
    (52) edge (60)
    (53) edge (60)
    ;
\end{tikzpicture}
\caption{The poset $\SDPoset$ restricted to disjoint unions of cycles using 2, 3, and 5; meet irreducible elements are bold (left). The poset $\mCL$ restricted to cyclic loop conditions using 2, 3, and 5; join irreducible elements are bold (right).}
    \label{fig:SDPvsmCL}
\end{figure}
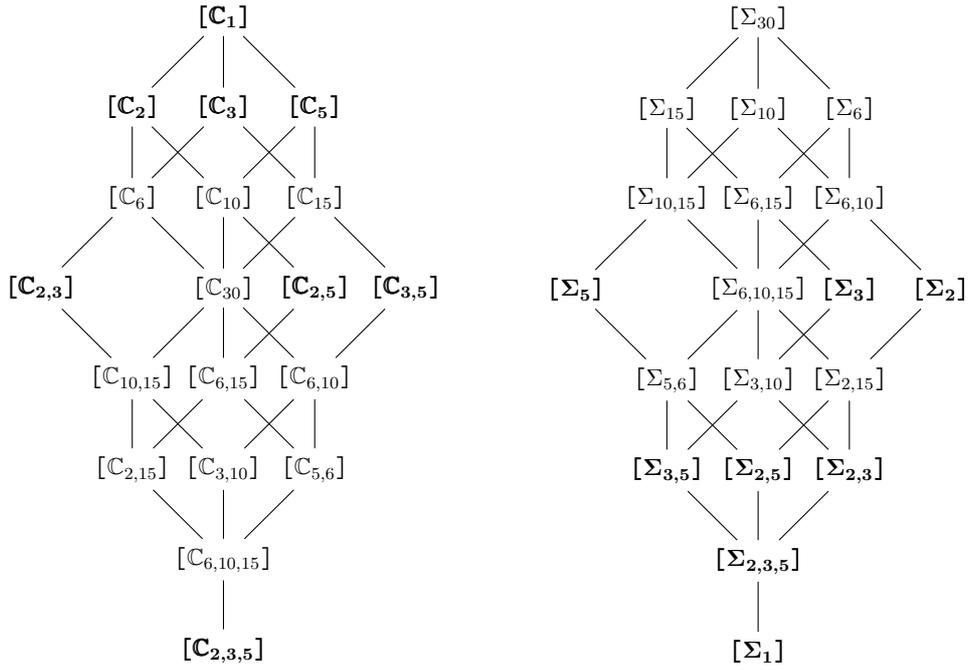

\section{Conclusion}
In the present article we described the poset $\SDPoset$, i.e., the subposet of $\PPPoset$ where every element is a pp-constructability type of some finite disjoint union of cycles. From the provided description it follows that $\SDPoset$ is a distributive lattice and that it contains infinite descending chains and infinite antichains. Some of these properties are inherited by $\PPPoset$.
For instance, it follows that $\PPPoset$ contains  infinite antichains and infinite descending chains; the latter was already known from the description of $\mathfrak{P}_{\operatorname{Boole}}$~\cite{albert}. 
\begin{question}
Is $\PPPoset$ a lattice?
\end{question}

Another direction for future work is to drop the smoothness assumption and to characterize the subposet $\mathfrak{P}_\text{D}$ of $\PPPoset$ consisting of the pp-contructability types of finite digraphs. For every element $[\mathbb G]$ of $\mathfrak{P}_\text{D}$ different from $[\tikz{
\node at (-30:0.2) [circle, fill, scale=0.3] (0) {};
}]$ we have that   $[\mathbb G]\leq [\tikz{
\node at (0.2,0) [circle, fill, scale=0.3] (1) {};
\node at (-0.15,0) [circle, fill, scale=0.3] (2) {};
\path[->,>=stealth'] 
    (2) edge (1);
}]$.
\begin{conjecture}
In  $\mathfrak{P}_\text{D}$ all elements of the form $[\Cyc{p}]$, where $p$ is a prime number, are
covered by $[\tikz{
\node at (0.2,0) [circle, fill, scale=0.3] (1) {};
\node at (-0.15,0) [circle, fill, scale=0.3] (2) {};
\path[->,>=stealth'] 
    (2) edge (1);
}]$, i.e., there are no $p$ and $[\mathbb G]\in \mathfrak{P}_\text{D}$ with $[\tikz{
\node at (0.2,0) [circle, fill, scale=0.3] (1) {};
\node at (-0.15,0) [circle, fill, scale=0.3] (2) {};
\path[->,>=stealth'] 
    (2) edge (1);
}]>[\mathbb G]>[\Cyc p]$. 
\end{conjecture}

From Theorem~\ref{thm:implicationSingleClc} we concluded that given  two finite sets $C,D\subset\N^+$ it is decidable whether $\Sigma_C\Rightarrow\Sigma_D$. 
\begin{question}
Is the following problem decidable:
\begin{align*}
    &\text{Input: two height 1 conditions $\Gamma$ and $\Sigma$}\\
    &\text{Output: Does $\Gamma\Rightarrow\Sigma$ hold?}
\end{align*}
\end{question}

\textbf{Acknowledgement.} 
The authors thank Jakub Opr\v{s}al for being the pioneer who tormented
himself with the first draft of this article. We would also like to
thank the anonymous referee for thoroughly digging through every proof.
Their comments shaped the article and greatly simplified the life of any future
reader.


\begin{thebibliography}{0}

\bibitem{BartoBulin}
L.~Barto and J.~Bul\'in.
\newblock CSP dichotomy for special polyads.
\newblock {\em Int. J. Algebra Comput.}, 23:1151--1174, 2013.

\bibitem{jakubPCSP}
L.~{Barto}, J.~{Bul{\'\i}n}, A.~{Krokhin}, and J.~{Opr{\v{s}}al}.
\newblock {Algebraic approach to promise constraint satisfaction}.
\newblock Preprint arXiv:1811.00970, 2018.

\bibitem{BartoKozikMarotiNiven}
L.~Barto, M.~Kozik, M.~Mar\'oti, and T.~Niven.
\newblock CSP dichotomy for special triads.
\newblock {\em Proceedings of the American Mathematical Society},
  137(9):2921--2934, 2009.


\bibitem{absorption}
L.~Barto and M.~Kozik.
\newblock Absorption in universal algebra and CSP.
\newblock In {\em The Constraint Satisfaction Problem: Complexity and
  Approximability}, volume~7 of {\em Dagstuhl Follow-Ups}, pages 45--77, 2017.

\bibitem{Barto_modularity}
L.~Barto, M.~Kozik, R.~Mckenzie, and T.~Niven.
\newblock Congruence modularity implies cyclic terms for finite algebras.
\newblock {\em Algebra Universalis}, 61(3):365–380, 2009.

\bibitem{BartoKozikNiven}
L.~Barto, M.~Kozik, and T.~Niven.
\newblock The {CSP} dichotomy holds for digraphs with no sources and no sinks
  (a positive answer to a conjecture of {B}ang-{J}ensen and {H}ell).
\newblock {\em SIAM Journal on Computing}, 38(5), 2009.

\bibitem{wonderland}
L.~Barto, J.~Opr\v{s}al, and M.~Pinsker.
\newblock The wonderland of reflections.
\newblock {\em Israel Journal of Mathematics}, 223(1):363--398, 2018.

\bibitem{Bodirsky-HDR}
M.~Bodirsky.
\newblock Complexity classification in infinite-domain constraint satisfaction.
\newblock M\'emoire d'habilitation \`a diriger des recherches, Universit\'{e}
  Diderot -- Paris 7. Available at arXiv:1201.0856, 2012.

\bibitem{albert}
M.~Bodirsky and A.~Vucaj.
\newblock {Two-element structures modulo primitive positive constructability}.
\newblock {\em Algebra universalis}, 81(2):20, 2020.

\bibitem{BulatovHColoring}
A.~A. Bulatov.
\newblock {H}-coloring dichotomy revisited.
\newblock {\em Theoretical Computer Science}, 349(1):31--39, 2005.

\bibitem{BulatovFVConjecture}
A.~A. Bulatov.
\newblock A dichotomy theorem for nonuniform {CSP}s.
\newblock In {\em 58th {IEEE} Annual Symposium on Foundations of Computer
  Science, {FOCS} 2017, Berkeley, CA, USA, October 15-17, 2017}, pages
  319--330, 2017.
  
\bibitem{BulinColoring}
J.~Bulín.
\newblock On the complexity of H-coloring for special oriented trees.
\newblock {\em European Journal of Combinatorics}, 69:54 -- 75, 2018.


\bibitem{FederVardi}
T.~Feder and M.~Y. Vardi.
\newblock The computational structure of monotone monadic {SNP} and constraint
  satisfaction: {a} study through {D}atalog and group theory.
\newblock {\em {SIAM} Journal on Computing}, 28:57--104, 1999.

\bibitem{HellTreeHomomorphisms}
P.~Hell, J.~Ne\v{s}et\v{r}il, and X.~Zhu.
\newblock Complexity of tree homomorphisms.
\newblock {\em Discret. Appl. Math.}, 70:23--36, 1996.

\bibitem{HellDualityTree}
P.~Hell, J.~Ne\v{s}et\v{r}il, and X.~Zhu.
\newblock Duality and polynomial testing of tree homomorphisms.
\newblock {\em Transactions of the American Mathematical Society},
  348:1281--1297, 1996.

\bibitem{Markowsky}
G.~Markowsky.
\newblock Free completely distributive lattices.
\newblock {\em Proceedings of The American Mathematical Society - PROC AMER
  MATH SOC}, 74:227--227, 02 1979.

\bibitem{olsak-loop}
M.~Ol\v{s}\'{a}k.
\newblock Loop conditions.
\newblock Preprint arXiv:1701.00260, 2017.

\bibitem{olsak-strong}
M.~Ol\v{s}\'{a}k.
\newblock Loop conditions for strongly connected digraphs.
\newblock {\em International Journal of Algebra and Computation}, October 2019.

\bibitem{oprsal18}
J.~Opr{\v{s}}al.
\newblock Taylor's modularity conjecture and related problems for idempotent
  varieties.
\newblock {\em Order}, 35(3):433--460, Nov 2018.

\bibitem{ZhukFVConjecture}
D.~Zhuk.
\newblock A proof of {CSP} dichotomy conjecture.
\newblock In {\em 58th {IEEE} Annual Symposium on Foundations of Computer
  Science, {FOCS} 2017, Berkeley, CA, USA, October 15-17, 2017}, pages
  331--342, 2017.

\end{thebibliography}
\end{document}